\documentclass[12pt]{amsart}

\usepackage{amsmath}
\usepackage{amssymb}  
\usepackage{latexsym} 
\usepackage{comment}
\usepackage{url}
\usepackage{fullpage,url,amssymb,amsmath,amsthm,amsfonts,mathrsfs}
\usepackage[usenames,dvipsnames]{color}
\usepackage[pagebackref = true, colorlinks = true, linkcolor = blue, citecolor = Green]{hyperref}
\usepackage[alphabetic,lite]{amsrefs}
\usepackage{enumitem}
\usepackage{amscd}   
\usepackage[all, cmtip]{xy} 
\usepackage{xfrac}
\usepackage[T1]{fontenc}
\usepackage{subfiles}
\usepackage{colonequals}

\numberwithin{equation}{section}



\usepackage[all]{xy}
\usepackage{fullpage}

\usepackage{color} 



\def\act#1#2%
  {\mathop{}%
   \mathopen{\vphantom{#2}}^{#1}%
   \kern-3\scriptspace%
   #2}

\newcommand{\Z}{{\mathbb Z}}

\newcommand{\Q}{{\mathbb Q}}

\newcommand{\C}{{\mathbb C}}
\newcommand{\F}{{\mathbb F}}

\newcommand{\PP}{{\mathbb P}}



\newcommand{\fm}{{\mathfrak m}}


\newcommand{\isom}{\simeq}


\DeclareMathOperator{\Tr}{Tr}

\DeclareMathOperator{\Gal}{Gal}

\DeclareMathOperator{\PGL}{PGL}

\def \f{{\mathbf f}}
\newcommand{\kk}{\mathbf{k}}

\newtheorem{Theorem}{Theorem}[section]
\newtheorem{Lemma}[Theorem]{Lemma}
\newtheorem{Proposition}[Theorem]{Proposition}
\newtheorem{Corollary}[Theorem]{Corollary}
\newtheorem{Definition}[Theorem]{Definition}
\newtheorem{Example}[Theorem]{Example}
\newtheorem{Remark}[Theorem]{Remark}

\newtheorem{Conjecture}[Theorem]{Conjecture}

\newtheorem{Question}[Theorem]{Question}

\newtheorem{prop}[Theorem]{Proposition}
\newtheorem{lem}[Theorem]{Lemma}
\newtheorem{example}[Theorem]{Example}
\newtheorem{remark}[Theorem]{Remark}

\newcommand{\Gm}{\mathbb{G}_m}

\newcommand{\Frob}{\operatorname{Frob}}

\newcommand{\Cartier}{\mathscr{C}}
\newcommand{\id}{\operatorname{id}}

\begin{document}

\title{Recovering affine curves over finite fields from $L$-functions}

\author{Jeremy Booher}
\address{School of Mathematics and Statistics, University of Canterbury, Private Bag 4800, Christchurch 8140, New Zealand}
\email{jeremy.booher@canterbury.ac.nz}
\urladdr{http://www.math.canterbury.ac.nz/\~{}j.booher}

\author{Jos\'e Felipe Voloch}
\address{School of Mathematics and Statistics, University of Canterbury, Private Bag 4800, Christchurch 8140, New Zealand}
\email{felipe.voloch@canterbury.ac.nz}
\urladdr{http://www.math.canterbury.ac.nz/\~{}f.voloch}

\begin{abstract}
Let $C$ be an algebraic curve over a finite field of odd characteristic.
We investigate using $L$-functions of Galois extensions of the function field $K$ of $C$ to effectively recover the curve $C$.  When $C$ is the projective line with four rational points removed, we show how to use $L$-functions of a ray class field of $K$ to effectively recover the removed points up to automorphisms of the projective line.  When $C$ is a plane curve, we show how to effectively recover the equation of $C$ using $L$-functions of Artin-Schreier covers.
\end{abstract}

\maketitle
%

\section{Introduction}

Let $K$ be a global field.
The zeta function of $K$ is an important invariant of $K$, encoding information about the places of $K$.  
By classical work of Gassmann, the zeta function of $K$ alone does not determine $K$; for example, Gassmann produces non-isomorphic number fields with the same zeta function \cite{gassmann}.  Similarly, Tate has shown that two function fields have the same zeta function if and only if the Jacobians of the corresponding curves are isogenous \cite{tate}.  It is natural to ask if additional information of a similar nature helps.

\begin{Question} \label{question1}
Do zeta functions of Galois extensions of $K$ (or $L$-functions for particular characters of the Galois group) suffice to determine $K$ up to isomorphism?  What kind of extensions are needed?  Can this be done explicitly?
\end{Question}

There has been a number of recent papers addressing instances of this question.
\begin{itemize}
\item  For example, \cite{CDL+} proves that a global field can be recovered
from its abelian $L$-functions. This requires a way to identify characters of two fields; more precisely, they prove two global fields are isomorphic if there is an identification of the groups of characters of their Abelianized absolute Galois groups preserving $L$-functions.

\item  We have shown that function fields of curves 
of genus at least two can be recovered from $L$-functions of unramified characters \cite{BV}.

\item Another approach is to attempt to recover extensions of a fixed global field from a few well-chosen $L$-functions.  For number fields, see \cites{CDL+,LiRu,Pi} and the references therein, while \cite{solomatin} considers extensions of a fixed function field.  A key difference is that all number fields are natural extensions of $\Q$, while function fields can be given as extensions of the rational function field in different ways and, in general, there is no canonical choice.
\end{itemize}
These results can all be seen as attempts to use limited information about the absolute Galois group of a global field to recover it.  (Uchida and Neukirch have shown that the absolute Galois group determines a global field \cites{neukirch,uchida76,uchida77}.)   

We will focus on $L$-functions of one-dimensional representations; for the purposes of this paper 
a character of a group $G$ is a homomorphism $G \to \mathbb{C}^\times$.
Our goal is to study Question~\ref{question1} for function fields of 
curves over finite fields with the goal of explicitly describing an effective,
finite set of $L$-functions of characters that characterizes the curve.
Our perspective is a bit different from the aforementioned results.
We work with a family of affine curves 
described in some concrete way and we recover the description using $L$-functions for uniformly-constructed covers. 
In other words, we aim to describe a set of $L$-functions that will determine
the member of the family up to some natural equivalence.

\begin{Example} \label{ex:intro}
As we will show later while proving Theorem~\ref{thm:equatoinintro}, we can recover the parameter $\lambda$ in
the Legendre family $y^2=x(x-1)(x - \lambda)$ over $\F_q$ using 
the $L$-functions of the extensions $z^p-z=\alpha x(1-y^{q-1})$ with $\alpha$ running through a basis of $\F_q/\F_p$.  See Example \ref{ex:legendre}.
\end{Example}


We first tackle the special case of the projective line minus four points in odd characteristic.  In this case, the function field is known and the goal is to determine the removed points using $L$-functions up to automorphisms of the projective line. We look at $L$-functions of a suitable class field (a ray class field for the function field giving an unramified cover of the projective line minus four points) and answer an analogue of a conjecture from
\cite{SV} in a strong form. Namely, we prove:

\begin{Theorem} \label{thm:p1intro}
Let $q$ be an odd prime power and $U$ be $\PP^1_{\F_q}$ minus four $\F_q$-rational points.  Then $U$ can be recovered up to Frobenius twist from the $L$-functions 
associated to the maximal abelian extension of $\F_q(x)$ unramified over $U$ and tamely ramified on its complement. 
\end{Theorem}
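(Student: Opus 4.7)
The plan is to combine explicit class field theory on $\mathbb{P}^1$ with a Fourier-inversion of the $L$-function data over the ray class group to recover the positions of the four punctures. The $L$-function of the trivial character equals the zeta function of $U$, from which we extract $q$ and the fact that $U$ has the shape $\mathbb{P}^1$ minus four rational points.

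Let $\fm=P_1+P_2+P_3+P_4$ be the modulus associated to the punctures. Class field theory identifies $G=\Gal(L/K)$ with the profinite completion of the ray class group $\operatorname{Cl}_{\fm}$, which decomposes as $\hat{\Z}\oplus(\F_q^*)^4/\Delta\F_q^*$ with the four copies of $\F_q^*$ canonically identified with the tame inertias. A character $\chi$ of $G$ therefore corresponds to a tuple $(\chi_1,\chi_2,\chi_3,\chi_4)$ of characters of $\F_q^*$ with $\prod_i\chi_i=1$, twisted by an unramified character. Since the coefficient of $T^d$ in $L(\chi,T)$ records $\sum_{\deg x=d}\chi(\Frob_x)$, knowing all of the $L$-functions is equivalent (by finite Fourier inversion on $G$) to knowing the map $x\mapsto\Frob_x$ from closed points of $U$ into $G$.

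The next step is to compute $\Frob_x$ explicitly. For a degree-one point $x$ and a base point $y_0\in U(\F_q)$, the divisor $[x]-[y_0]$ equals $\divv\bigl((y-x)/(y-y_0)\bigr)$, so its class in the ray class group is represented by the tuple $\bigl((P_i-x)/(P_i-y_0)\bigr)_{i=1,\ldots,4}$ modulo the diagonal embedding of $\F_q^*$ (with the convention that a factor at $\infty$ is the leading coefficient, namely $1$). As $x$ varies over $U(\F_q)$, the resulting tuples trace out a parametric affine line in $(\F_q^*)^3\subset\F_q^3$. For $q$ large enough, the $q-3$ visible points determine the whole line, and its three ``missing'' points---where one coordinate vanishes---recover exactly $P_1,P_2,P_3$, up to the affine translation-and-scaling freedom inherent in the choices of $y_0$ and of diagonal representative, i.e.\ up to the $\PGL_2$-stabilizer of $P_4=\infty$.

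To remove the remaining $\PGL_2$-ambiguity we note that the four tame inertias in $G$ are canonically defined but not canonically labeled; running the above recovery for all choices of which inertia plays the role of $P_4$ produces an $S_4$-orbit of configurations, and because the Klein four-group fixes the cross-ratio this orbit descends to the $S_3$-orbit of $\lambda$, equivalently to the $j$-invariant and hence to the $\PGL_2(\F_q)$-class of $U$. Finally, the Frobenius twist is forced upon us because the identification of local tame characters with complex characters of $\F_q^*$ requires a choice of embedding $\mu_{q-1}(\Fbar_q)\hookrightarrow\C^*$; different choices are related by $\chi\mapsto\chi^p$, and a substitution $y=x^p$ inside the Jacobi-type sum $\sum\chi_1(x)\chi_2(x-1)\chi_3(x-\lambda)$ read off from the $L$-function coefficient shows that this replaces the data of $\lambda$ with that of $\lambda^p$. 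The main obstacle is the explicit class-field-theoretic computation showing $x\mapsto\Frob_x$ is the claimed tuple of differences, and hence that the image of $U(\F_q)$ is a line on which the punctures appear as coordinate-hyperplane intersections plus a direction at infinity; the remainder is bookkeeping.
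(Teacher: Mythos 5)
Your strategy is genuinely different from the paper's (which isolates a single distinguished character by intrinsic properties and computes its $L$-function modulo $p$ via the Cartier operator and R\"uck's theorem), and several of your steps are sound: Fourier inversion of the degree-one coefficients of $\log L$ over the ray class group does recover the multiset $\{\Frob_x : x \in U(\F_q)\}$ inside the abstract Galois group $G$, and your idelic computation of $\Frob_x$ as the tuple $\bigl((P_i-x)/(P_i-y_0)\bigr)_i$ modulo the diagonal is correct (it is Propositions~\ref{prop:cftgj} and~\ref{prop:rcf} in different clothing). The gap is in the passage from ``a subset of the abstract group $G$'' to ``an affine line in $\F_q^3$ minus three points.'' The notion of a line uses the additive structure of $\F_q$, which is not visible in the data you are given. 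The identification of each tame inertia subgroup $I_{P_i}\le G$ with $\F_q^{\times}$ is canonical only via the local Artin map, which belongs to the arithmetic of $K$ and not to the $L$-function package; from the abstract data that identification is ambiguous up to $t\mapsto t^a$ for \emph{every} $a\in(\Z/(q-1))^{\times}$ (the exponents at the four inertias must coincide because of the diagonal relation, but $a$ is otherwise arbitrary), not merely up to $t\mapsto t^p$ as you assert when you say that different embeddings $\mu_{q-1}\hookrightarrow\C^{\times}$ ``are related by $\chi\mapsto\chi^p$.'' There are $\varphi(q-1)$ such embeddings, only $r$ of which are Frobenius powers.

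Consequently the real content of the theorem is exactly the step you skip: showing that if the coordinatewise $a$-th power map carries the punctured line attached to $\lambda$ onto the punctured line attached to some $\lambda'$ (equivalently, that the character $\chi^a$ has the same intrinsic invariants as $\chi$), then $a$ must be a power of $p$. This is Proposition~\ref{prop:charsdiffer}, a delicate elementary analysis of multinomial coefficients modulo $p$ occupying all of Section~\ref{ss:technicalproof}, and it is false without exception: for $q=9$ the characters with $(b,c)=(1,3)$ and $(3,1)$ pass every test and a separate argument (Remark~\ref{remark:q9}) is needed. Your proposal as written would yield a clean statement with no exceptional case, which is a symptom of the undercounted ambiguity. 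To repair the argument you would have to classify the $a\in(\Z/(q-1))^{\times}$ for which the $a$-th power map sends one punctured line in the torus to another, which is essentially the same combinatorial problem the paper solves.
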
 

We prove two versions of this theorem.  Theorem~\ref{thm:p1distinguished} (and Remark~\ref{remark:q9} for an exceptional case) shows this can be accomplished using the $L$-function of a single well-chosen character.   
Furthermore, we can identify which character to use given only the $L$-functions for every character of the extension.
Although we need to know $U$ to compute the $L$-function, the point is that the resulting
value of the $L$-function suffices to determine $U$ uniquely up to Frobenius 
twist.
Along slightly different lines,
Theorem~\ref{thm:p1zilber} shows that if $U$ and $U'$ are $\PP^1_{\F_q}$ minus four $\F_q$-rational points and there is an isomorphism between the Galois groups of appropriate ray class fields which identifies $L$-functions then $U$ and $U'$ are Frobenius twists.  
The proofs rely on obtaining information about the reductions of these $L$-functions modulo primes above $p$ using the Cartier operator, making use of a result of R\"{u}ck \cite{ruck}.

\begin{remark}
\begin{enumerate}
\item  The first version of the theorem fits into the approach taken in \cite{LiRu} and \cite{solomatin}, while the second fits into the approach taken in \cites{BV} and \cite{CDL+} as it involves identifying characters. 

\item   
Many of our arguments extend without change to
characteristic $2$ but a few do not. In some places, we will comment on
the changes that would need to be made to extend the results but we do not
systematically address the issue.

\item Our proof in \cite{BV} used a conjecture of Bogomolov, Korotiaev and Tschinkel about Jacobians of curves of genus at least two which was proven by Zilber \cites{BKT,Z1,Z2}.  In Section~\ref{ss:backgroundl}, we extend this conjecture to generalized Jacobians of dimension at least two, and can reverse our argument and use Theorem~\ref{thm:p1zilber} to establish a special case of this extended conjecture.
 \end{enumerate}
\end{remark}

Then we tackle the general question of recovering the equation of a curve using $L$-functions of Artin-Schreier extensions.
\begin{Theorem}  \label{thm:equatoinintro}
For fixed odd prime power $q=p^r$ and $d\ge 1$, there is an explicit finite
set of polynomials $f \in \F_q[x,y]$ such that we can
recover the coefficients of any absolutely irreducible $F \in \F_q[x,y]$ of degree $d$, defining a function field $K$,
from the $L$-functions of the Artin-Schreier extensions of $K$ given by $z^p-z=f$.
\end{Theorem}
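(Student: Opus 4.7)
The plan is to extract the $\F_{q^N}$-rational points of the affine curve $\{F=0\}$ from the given L-functions for a suitable $N$, and then reconstruct $F$ by polynomial interpolation. This generalises Example~\ref{ex:intro}, where $\alpha x(1-y^{q-1})$ simultaneously picks out the points of $C(\F_q)$ with $y=0$ and, via $\F_q$-Fourier analysis in $\alpha$, recovers their $x$-coordinates. Fix a nontrivial additive character $\psi:\F_p\to\C^\times$ and set $\psi_n:=\psi\circ\Tr_{\F_{q^n}/\F_p}$. For any $f\in\F_q[x,y]$ regarded as an element of $K$, the standard identity
\[
\log L(T,\chi_f) \;=\; \sum_{n\ge 1}\frac{T^n}{n}\,S_n(f),\qquad S_n(f):=\sum_{P\in U(\F_{q^n})}\psi_n\bigl(f(P)\bigr),
\]
with $U\subset C$ the open subset on which $f$ is regular, recovers $S_n(f)$ from $L(T,\chi_f)$ for every $n \ge 1$.

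The key construction is the following. For each $m\ge 1$ and each $\Gal(\F_{q^m}/\F_q)$-orbit $O\subset\F_{q^m}^2$, the indicator polynomial
\[
g_O(x,y) \;:=\; \sum_{(a,b)\in O}\bigl(1-(x-a)^{q^m-1}\bigr)\bigl(1-(y-b)^{q^m-1}\bigr)
\]
lies in $\F_q[x,y]$ by Galois invariance of the sum and restricts to the indicator of $O$ on $\F_{q^m}^2$. Since $O$ is a single Galois orbit and $C(\F_{q^m})$ is Galois-stable, $O\cap C(\F_{q^m})$ is either all of $O$ or empty, and for any $h_1,h_2\in\F_q[x,y]$ we have
\[
S_m(h_1 g_O) - S_m(h_2 g_O) \;=\sum_{P\in O\cap C(\F_{q^m})}\bigl(\psi_m(h_1(P))-\psi_m(h_2(P))\bigr).
\]
Taking $h_1,h_2$ from the finite family $\mathcal{H}:=\{\alpha x^i y^j:\alpha\in\F_q,\ 0\le i,j\le N\}$, the nonvanishing of some such difference detects $O\subset C(\F_{q^m})$: when $|O|=m$ the coordinates $(a,b)$ satisfy $\F_q(a,b)=\F_{q^m}$, so the values $\{h(P):h\in\mathcal{H}\}$ span $\F_{q^m}$ over $\F_q$, and since $\Tr_{\F_{q^m}/\F_p}$ is surjective, some two values have distinct $\psi_m$-images.

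Choose $N$ so that $q^N>d^2$ and such that the Hasse-Weil bound for the genus $\le(d-1)(d-2)/2$ curve $C$ forces $|C(\F_{q^N})\cap\A^2(\F_{q^N})|>d^2$. Then by Bezout's theorem, any absolutely irreducible degree-$d$ polynomial in $\F_q[x,y]$ vanishing on $C(\F_{q^N})\cap\A^2(\F_{q^N})$ shares an absolutely irreducible component with $F$, hence equals $\lambda F$ for some $\lambda\in\F_q^\times$. The explicit set
\[
\mathcal{F} \;=\; \bigl\{h\cdot g_O:1\le m\le N,\ O\ \text{a Galois orbit in $\F_{q^m}^2$},\ h \in \mathcal{H}\bigr\}
\]
is finite and depends only on $q$ and $d$. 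From the L-functions $\{L(T,\chi_f):f\in\mathcal{F}\}$ we extract $S_m(f)$ for every $m\ge 1$, hence orbit by orbit recover $C(\F_{q^N})\cap\A^2(\F_{q^N})$ exactly; polynomial interpolation on this finite point set together with a designated coefficient to fix the scalar then recovers the coefficients of $F$.

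The principal hurdle is that the indicator polynomials $g_O$ have large pole order at the places of $C$ lying over infinity, so $L(T,\chi_{hg_O})$ picks up ramified Euler factors there, and the identity of the first paragraph relating the L-function to the affine sum $S_n(f)$ requires computing and stripping off these factors. Since the polar behaviour of $hg_O$ at infinity is controlled purely by its Newton polygon and is thus independent of $F$, this correction can be carried out using only data derivable from $\mathcal{F}$; however, the careful bookkeeping, along with the treatment of any singular points of the affine model $\{F=0\}$, constitutes the technical core of the argument.
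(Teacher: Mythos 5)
Your overall strategy differs from the paper's: you recover the full point set $C(\F_{q^N})\cap\A^2(\F_{q^N})$ via indicator functions of Galois orbits and then invoke Bezout and interpolation, whereas the paper recovers, along a single well-chosen fibre $h(x)=0$, the power sums $\sum_{b:F(a,b)=0}b^j$ via a Vandermonde argument and then reconstructs $F(a,y)$ from its power sums (a nontrivial step in characteristic $p$, Lemma~\ref{lem:powers}). Either skeleton could work, and your Bezout endgame is fine (modulo the scalar ambiguity, which is inherent to both arguments). The genuine gap is in the step you yourself flag as the ``principal hurdle'' and then dismiss. The identity $\log L(T,C,\chi_f)=\sum_n S_n(f)T^n/n$, with $S_n(f)$ a sum over the \emph{affine} points, holds only when the Artin--Schreier extension is ramified at \emph{every} place of $C$ above infinity (equivalently, when no polar part of $f=hg_O$ at an infinite place is of the form $w^p-w$); otherwise the Euler product contains factors at the unramified infinite places, and their contribution to $S_n$ involves $\chi_f(\Frob)$ there. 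Your claim that this contribution ``is controlled purely by the Newton polygon and is thus independent of $F$'' is false: the infinite places of $C$, their degrees, the valuations of $x$ and $y$ at them, and hence the polar parts of $hg_O$ and the relevant Frobenius values all depend on $F$, which is exactly what you are trying to recover. (In degenerate cases the same issue can even make $\chi_{hg_O}$ trivial.)

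The paper's Lemma~\ref{lem:f} is precisely the device needed here and is absent from your argument: one replaces the test function $f_0$ by the family $f_0\cdot(1+(x^{q^m}-x)g)$ over all $g$ of degree at most $D$; every member takes the same values at $\F_{q^m}$-points, a dimension count shows that for $D$ large (depending only on $p$ and $d$) at least half the members are ramified at all infinite places, and the correct exponential sum is then read off as the most common value over the family. Without this, or some substitute that certifies ramification at infinity or computes the unramified Euler factors using only $q$ and $d$, your argument does not go through. The treatment of singular affine points, which you also defer, is comparatively minor (the paper sidesteps it by choosing the fibre $h(x)=0$ to miss them), but the behaviour at infinity is the technical core of the theorem and your proposed resolution of it is incorrect as stated.
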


This is Theorem~\ref{thm:equations}.  It uses a very large number of $L$-functions, but can be simplified given additional information about the form of $F$ as illustrated in Example~\ref{ex:intro}.

\begin{remark}
There are many irreducible polynomials $F \in \F_q[x,y]$ of degree $d$ giving plane curves with the same function field $K$.
It is again worth emphasizing that in Theorem~\ref{thm:equatoinintro} we use the explicit description of $K$ as the function field of the curve given by $F$ in order to describe Artin-Schreier extensions of $K$ and then recover the description of $K$ by $F$ using $L$-functions of these extensions.  This is why we recover a specific $F$ and not just the function field $K$ up to isomorphism.

For example, when $-1$ is a square in $\F_q$ the fibers of the Legendre family over $\lambda$ and $1 -\lambda$ are isomorphic (via $(x,y) \mapsto (1-x,\sqrt{-1} y)$), but Example~\ref{ex:intro} is able to recover $\lambda$ because the Artin-Schreier extensions used depended on the  fiber.  Identifying the function fields of the two fibers, we end up looking at a different set of $L$-functions!
\end{remark}

\subsection*{Acknowledgements}
The authors were supported by the Marsden Fund Council administered by the Royal Society of New Zealand.  We are grateful for the assistance of Z. Brady for his help with Section~\ref{ss:technicalproof}, in particular the proofs of Proposition~\ref{prop:charsdiffer} and Lemma~\ref{lem:witness} and to the referee for the careful reading of the manuscript.

\section{\texorpdfstring{$L$}{L}-functions and Generalized Jacobians}

\subsection{Class Field Theory}

We describe the well-known connection between the class-field theoretic notions of Hilbert class field and ray class field with the language of generalized Jacobians in the function field setting.  Since we wish our extensions to have the same constant field (and have finite degree), we use the version of Hilbert and ray class fields introduced in \cites{rosen,hk}.  

Let $K/\F_q$ be a function field,  $\mathcal{S}_K$ be the set of places of $K$ and $S \subset \mathcal{S}_K$ a finite subset. Let $I_K$ denote the ideles of $K$ and $I_{K,S}$ the subgroup
of the {ideles $(\alpha_v)_{v \in \mathcal{S}_K}$ such that $\alpha_v=1$ for $v \in S$}.  
We let $\fm$ be a modulus (a formal sum of places of $K$ or, equivalently, an effective divisor) disjoint from $S$.   We allow $\fm=(0)$, the empty divisor.  {Define $I_{K,S}^{\fm}$ to be the elements of $I_{K,S}$ which are congruent to $1$ modulo $\fm$.}
The ray class field of $K$ with modulus $\fm$ is defined to be the field corresponding to the subgroup $K^\times I_{K,S}^{\fm} / K^\times$ of $I_K / K^\times$ under the class field theory correspondence.  We denote this field by $K_{S}^\fm$.

\begin{Lemma} \label{lem:maximality}
The maximal Abelian extension of $K$  in which the places of $S$ split completely and with conductor dividing $\fm$ is $K_S^\fm$.
\end{Lemma}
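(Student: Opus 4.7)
The plan is to invoke the class field theory correspondence for function fields. Finite abelian extensions $L/K$ with the same constant field as $K$ are in inclusion-reversing bijection with certain finite-index open subgroups $H_L$ of $I_K$ containing $K^\times$, via $H_L = K^\times \cdot N_{L/K}(I_L)$. The maximal $L$ with a prescribed set of local properties thus corresponds to the smallest $H_L$ satisfying the reformulated local conditions.

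First I would translate each defining property into a condition on $H_L$. By local-global compatibility of the Artin reciprocity map, a place $v$ splits completely in $L$ iff the local reciprocity map $K_v^\times \to \Gal(L/K)$ is trivial iff $K_v^\times \subset H_L$; and the conductor of $L/K$ at $v$ is at most $n_v := \ord_v(\fm)$ iff $U_v^{(n_v)} \subset H_L$, where $U_v^{(n_v)} = 1 + \fp_v^{n_v}$. Thus $L/K$ has both properties iff $H_L$ contains $K^\times$, $K_v^\times$ for each $v \in S$, and $U_v^{(n_v)}$ for every place $v$ (with $U_v^{(0)} := \mathcal{O}_v^\times$).

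Next I would verify that the smallest such $H$ coincides with $K^\times I_{K,S}^\fm$. That $K^\times I_{K,S}^\fm$ itself satisfies the required containments is checked generator by generator: elements of $I_{K,S}^\fm$ are trivial at $v \in S$ and lie in $U_v^{(n_v)}$ at $v \in \operatorname{supp}(\fm)$, so each of $K^\times$, the $K_v^\times$ for $v \in S$, and the $U_v^{(n_v)}$ is contained in $K^\times I_{K,S}^\fm$, the less obvious cases being handled using weak approximation and the openness of $K^\times I_{K,S}^\fm$ in $I_K$. For minimality, any subgroup containing these generators will automatically contain the group they generate, which by direct unpacking of the definition of $I_{K,S}^\fm$ is all of $K^\times I_{K,S}^\fm$: one decomposes an arbitrary $\alpha \in I_{K,S}^\fm$ as a product of elements supported at $S$, elements in $\prod_v U_v^{(n_v)}$, and a principal idele absorbing its divisor away from $S \cup \operatorname{supp}(\fm)$.

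The main obstacle is step one, which rests on the deep local-global compatibility of the Artin reciprocity map; this is the standard nontrivial input. The remainder is bookkeeping with the idele-theoretic definitions, though care with the function field CFT conventions (the constant field restriction, topological completion when passing between the idele class group and its profinite completion) is needed to correctly frame the correspondence.
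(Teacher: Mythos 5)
Your proof is correct and follows essentially the same route as the paper's: translate the splitting and conductor conditions into the containments $K_v^\times \subset W$ for $v \in S$ and $I_{K,S}^{\fm} \subset W$ under the class field theory correspondence, conclude $K^\times I_{K,S}^{\fm} \subset W$, and hence $L \subset K_S^{\fm}$. The only difference is that you also verify directly that $K_S^{\fm}$ itself satisfies the two conditions (the ``upper bound is attained'' half of maximality), a point the paper delegates to the cited theorem of Halter-Koch.
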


\begin{proof}
This is an extension of \cite[Theorem (iv)]{hk}.  Let $L$ be an Abelian extension of $K$ in which the places of $S$ split completely and with conductor dividing $\fm$.  Under the class field theory correspondence, it corresponds to an open subgroup $W \subset I_K$ containing $K^\times$.  For a place $v \in S$, we know that $K_v^\times \subset W$ as $v$ splits completely in $L$.  Since the conductor of $L$ divides $\fm$, we know that $$I^\fm_{K,S} \subset W.$$
This shows that $K^\times I_{K,S}^\fm \subset W$, and hence $L \subset K_{S}^{\fm}$.
\end{proof}

Now let $C$ be a smooth projective curve over $\F_q$ with function field $K$.  We fix a rational point $M$ of $C$, and take $S = \{v_M\}$ where $v_M$ is the place associated to $M$.  Note that the splitting condition implies the constant fields {of $K$ and $K_S^\fm$ are the same}.

We consider an abelian extension of $K$ (abelian covers of $C$) such that $v_M$ splits completely (the cover is defined over $\F_q$ and the fiber above $M$ is rational) subject to additional ramification conditions.

Let $J_\fm$ denote the generalized Jacobian of $C$ with respect to the modulus $\fm$ \cite{Serre}.
Let ${\rm supp}(\fm)$ denote the support of $\fm$, i.e, the set of places of $K$ occurring with nonzero multiplicity in $\fm$.
The Abel-Jacobi map $C \setminus {\rm supp}(\fm) \to J_\fm$ associated with a
divisor $D_1$ of degree one is the map $P \mapsto [P - D_1]$. Let $\Phi$ denote the Frobenius endomorphism of $J_\fm$.

\begin{prop} \label{prop:cftgj}
The ray class field of $K$ with modulus $\fm$ is the function field of the cover
$C'$ of $C$ given by the fiber product of the isogeny $\Phi^* - \id : J_\fm \to J_\fm$ with the Abel-Jacobi map associated to $M$.
This induces an isomorphism $\Gal(K_{S}^{\fm}/K) \simeq J_\fm(\F_q) $ and, viewing $P \in \mathcal{S}_K$ as
a divisor of degree $\deg(P)$,
the Frobenius $\Frob_P$ is identified with $[P - \deg(P) D_1] \in J_\fm(\F_q)$.
\end{prop}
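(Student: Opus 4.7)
The plan is to use Lang's theorem on the generalized Jacobian $J_\fm$, pull back by the Abel-Jacobi map associated to $M$, and identify the resulting cover with $K_S^\fm$ via Lemma~\ref{lem:maximality}.

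First I would construct the cover $C'$ explicitly. By Lang's theorem, the isogeny $\Phi^* - \id : J_\fm \to J_\fm$ is \'etale and surjective with kernel $J_\fm(\F_q)$. Let $\iota : C \setminus {\rm supp}(\fm) \to J_\fm$ denote the Abel-Jacobi map associated to $D_1 = M$, so that $\iota(M) = 0$. Pulling back the Lang isogeny along $\iota$ produces the cover $C'$, which is \'etale of degree $|J_\fm(\F_q)|$ over $C \setminus {\rm supp}(\fm)$, Galois with group $J_\fm(\F_q)$. The fiber of the Lang isogeny above $0 \in J_\fm$ is the kernel $J_\fm(\F_q)$, consisting entirely of $\F_q$-points, so $v_M$ splits completely in $K(C')/K$.

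Next I would invoke Serre's geometric class field theory from \emph{Algebraic Groups and Class Fields}. The generalized Jacobian $J_\fm$ has a universal property: any rational map from $C$ to a commutative algebraic group that is trivialized modulo $\fm$ factors through $\iota$, which lets one control the ramification of the pullback cover at places in ${\rm supp}(\fm)$. This shows that $K(C')/K$ is an abelian extension with conductor dividing $\fm$, so by Lemma~\ref{lem:maximality} we have $K(C') \subseteq K_S^\fm$. Equality then follows from Serre's identification of $J_\fm(\F_q)$ with the ray class group $I_K/(K^\times I_{K,S}^\fm)$, which matches the degrees and yields $\Gal(K_S^\fm/K) \simeq J_\fm(\F_q)$.

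For the final assertion, pick $\tilde{P} \in C(\overline{\F_q})$ lying above a place $P$ of degree $d$, and choose $y \in J_\fm(\overline{\F_q})$ with $\Phi^*(y) - y = \iota(\tilde{P}) = [\tilde{P} - D_1]$. The telescoping
$$(\Phi^*)^d(y) - y = \sum_{i=0}^{d-1}(\Phi^*)^i([\tilde{P} - D_1]) = [P - \deg(P) D_1]$$
(using that $D_1 = M$ is $\F_q$-rational and that $\sum_{i=0}^{d-1} \Phi^i(\tilde{P}) = P$ as a divisor) lies in $J_\fm(\F_q)$. Under the standard dictionary between Lang-isogeny covers and Frobenius actions on fibers over $\F_q$-points, this is exactly $\Frob_P$, completing the proof. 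The main obstacle is the conductor and degree comparison in the middle step, which is classical but requires carefully reconciling the idele-theoretic definition of $K_S^\fm$ with the divisor-theoretic description of $J_\fm(\F_q)$; the rest is a geometric reformulation.
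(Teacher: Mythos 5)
Your proposal follows essentially the same route as the paper: pull back the Lang isogeny $\Phi^* - \id$ along the Abel--Jacobi map, check complete splitting at $v_M$ and that the conductor divides $\fm$, invoke Lemma~\ref{lem:maximality} for the containment $K(C') \subseteq K_S^\fm$, and identify $J_\fm(\F_q)$ with the $S$-ray class group modulo $\fm$ to obtain equality. The only place you go beyond the paper's own argument is the explicit telescoping computation identifying $\Frob_P$ with $[P - \deg(P)D_1]$, which the paper leaves implicit in the class field theory correspondence; your version of that step is correct.
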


\begin{proof}
Let $L$ be the function field of $C'$.  Observe that $v_M$ splits completely in $L$ as the fiber of the map $\Phi^* - 1$ over $0$ is $J_\fm(\F_q)$.  We see the cover is Galois, with Abelian Galois group $J_\fm(\F_q)$.
Furthermore, the conductor of $L$ divides $\fm$ because the cover $C' \to C \setminus {\rm supp}(\fm)$ is the pullback of an isogeny $J' \to J_\fm$.  
  Then Lemma~\ref{lem:maximality} shows that
$L \subset K_{S}^{\fm}$, so it suffices to show that $J_\fm(\F_q) \simeq \Gal(K_{S}^{\fm}/K)$.

But class field theory in the form of \cite[Lemma 2 and Theorem]{hk} shows that $\Gal(K_{S}^{\fm}/K)$ is isomorphic to the idele $S$-ray class group modulo $\fm$.  There is a natural surjection
 from the set of divisors on $C$ with support away from $\fm$ to the set of divisors of degree $0$ on $C$ whose support is disjoint from $\fm$ given by sending a divisor $D$ to $D - \deg(D) M$.  
As in \cite[V \S 2]{Serre}, let $C_\fm^0$ denote the set of degree zero divisors on $C$ with support disjoint from $\fm$ up to $\fm$-equivalence; it is a quotient of the group of divisors of degree $0$ on $C$ whose support is disjoint from $\fm$ by the divisors of functions on $C$ which are congruent to one modulo $\fm$.  We know that $C_\fm^0 = J_\fm(\F_q)$, and
every equivalence classes of divisors on $C$ with support disjoint from $\fm$ can be represented by a divisor whose support is disjoint from $S$ as well.  But the  idele $S$-ray class group modulo $\fm$ is precisely divisors whose support is disjoint from $S$ and $\fm$ modulo the divisors of functions which are congruent to one modulo $\fm$ and are invertible at the points of $S$.  This completes the proof.
\end{proof}

\subsection{\texorpdfstring{$L$}{L}-functions} \label{ss:backgroundl}  By a \emph{pointed curve} $(C,D_1)$, we mean a smooth projective curve $C$ over $\F_q$ together with a choice of degree $1$ divisor $D_1$.  Such a divisor always exists \cite{Schmidt1931}.  
To connect with class field theory, we assume the existence of a rational point $M$ and take $D_1 = [M]$.  

  Let $\chi$ be a character of $\Gal(K_{S}^{\fm}/K) \simeq J_\fm(\F_q)$ with conductor $\fm$; this means $\fm$ is the smallest modulus such that $\chi$ factors through $\Gal(K_{S}^{\fm}/K)$.  It is a character of the absolute Galois group of $K$ that is
trivial on the absolute Galois group of $\F_q$.  

\begin{Definition}
For a place $P$ of $K$ not in 
${\rm supp}(\fm)$,
let $\Frob_P \in \Gal(K_{S}^{\fm}/K)$ denote the Frobenius at $P$. 
For any character $\chi$ with conductor $\fm$ and ${\rm supp}(\fm)$ disjoint from $S$, the $L$-series (or $L$-function) for $\chi$ is defined as
\[
L(T,C,\chi) := \prod_{P \in \mathcal{S}_C\setminus {\rm supp}(\fm)} ( 1 - \chi(\Frob_P) T^{\deg P})^{-1}.
\]
\end{Definition}

We sometimes use the notation $L(T,D/C,\chi)$ in the case of a Galois cover
$D/C$ if we can view $\chi$ as a character of $\Gal(D/C)$.

Note that this definition depends on the choice of marked point $M$ as the Abel-Jacobi map determined by $M$ is used to identify the Galois group with $J_\fm(\F_q)$.
It is a standard calculation that 

\begin{equation}
\label{eq:standard}
\frac{L'(T,C,\chi)}{L(T,C,\chi)} =
\sum_{P \in \mathcal{S}_C} 
\frac{(\deg P) \chi(\Frob_P) T^{\deg P -1}}{( 1 - \chi(\Frob_P) T^{\deg P})} =
\frac{d}{dT}\left(\sum_{n=1}^{\infty} S_n(\chi) \frac{T^n}{n}\right)
\end{equation}
where 
$\displaystyle S_n(\chi) = \sum_{P \in U(\F_{q^n})} \chi(P+\Phi(P)+\cdots+\Phi^{n-1}(P))$ and $U = C \setminus {\rm supp}(\fm)$.

One of the main purposes of this paper is to study the following
conjecture:

\begin{Conjecture} \label{conj:lfun}
Let $(C,[M])$ and $(C',[M'])$ be smooth irreducible projective pointed curves 
over a finite field~$\F_q$.  Let $\fm,\fm'$ be moduli on $C,C'$ disjoint from $M,$ $M'$ respectively.  Suppose the corresponding generalized
Jacobians $J_{\fm},J_{\fm'}'$ have dimension at least two, and that there is a set-theoretic map $\psi : J_{\fm'}'(\overline{\F}_q) \to J_{\fm}(\overline{\F}_q)$ inducing an isomorphism of groups between $J_{\fm}(\F_{q^n})$ and $J_{\fm'}'(\F_{q^n})$ 
for every $n \ge 1$.
If $L(T, C \otimes \F_{q^n}, \chi) = L(T, C' \otimes \F_{q^n}, \chi \circ \psi|_{
J_{\fm'}'(\F_{q^n})})$ for all $n$ and all characters $\chi$ of $J_{\fm}(\F_{q^n})$, then 
$C$ and $C'$ are Frobenius twists of each other and the map
$\psi$ arises from a morphism of curves composed with a limit of Frobenius maps
which sends $M$ to $M'$ and $\fm$ to $\fm'$.
\end{Conjecture}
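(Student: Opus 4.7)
The plan is to adapt the strategy used in \cite{BV} for ordinary Jacobians of curves of genus at least two, where the proof ultimately reduced to a theorem of Zilber \cites{Z1,Z2} on the Bogomolov--Korotiaev--Tschinkel conjecture; the main new input needed here is an analog of Zilber's theorem for generalized Jacobians.

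First, I would extract combinatorial data from the hypothesized $L$-function equalities. By \eqref{eq:standard}, $L(T, C \otimes \F_{q^n}, \chi)$ encodes the character sums $S_m(\chi)$ for all $m \ge 1$, and specializing to $m=1$ gives $S_1(\chi) = \sum_{P \in U(\F_{q^n})} \chi([P - M])$. The hypothesis $L(T, C \otimes \F_{q^n}, \chi) = L(T, C' \otimes \F_{q^n}, \chi \circ \psi)$ for every character $\chi$ of $J_\fm(\F_{q^n})$, together with Fourier inversion on this finite abelian group, forces the multiset equality
\[ \{[P - M] : P \in U(\F_{q^n})\} = \{\psi([P' - M']) : P' \in U'(\F_{q^n})\} \text{ in } J_\fm(\F_{q^n}). \]
Injectivity of the Abel--Jacobi map into $J_\fm$ (valid since $\dim J_\fm \ge 2$) produces a bijection $\phi_n \colon U'(\F_{q^n}) \to U(\F_{q^n})$ satisfying $[\phi_n(P') - M] = \psi([P' - M'])$. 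Varying $n$ and using that $\psi$ preserves $\F_{q^n}$-rationality (built into the assumption that it restricts to group isomorphisms on each $J_\bullet(\F_{q^n})$) yields a Frobenius-equivariant bijection $\phi \colon U'(\overline{\F}_q) \to U(\overline{\F}_q)$ intertwined with $\psi$ through the Abel--Jacobi maps.

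Next, I would promote $\phi$ to an algebraic morphism. Because $\psi$ is a group isomorphism on every $\F_{q^n}$-point group, $\phi$ preserves every relation of the form ``$\sum_i n_i P'_i$ is principal modulo $\fm'$''; equivalently, the entire incidence structure of the Abel--Jacobi image of $C'$ in $J_{\fm'}'$ is transported under $\phi$ to that of $C$ in $J_\fm$. One then invokes the anticipated generalization of Zilber's theorem to generalized Jacobians of dimension at least two: such a combinatorially rigid identification between Abel--Jacobi images must come from a morphism of curves, possibly composed with a limit of Frobenius maps. The resulting morphism automatically sends $M$ to $M'$ and $\fm$ to $\fm'$ since these data were used to construct the Abel--Jacobi embeddings in the first place.

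The principal obstacle is the last step: no such extension of Zilber's theorem to generalized Jacobians is currently available, which is precisely why the statement is posed as a conjecture. Unlike ordinary Jacobians, $J_\fm$ is a semi-abelian variety when $\fm \ne 0$, and the rigidity arguments underlying \cites{BKT,Z1,Z2} rely on features of abelian varieties that do not transfer directly to the semi-abelian setting. As the authors note, reversing the proof of Theorem~\ref{thm:p1zilber} does yield the conjecture in the special case of the projective line minus four rational points, so one possible path forward is to assemble many such explicit $L$-function reconstruction theorems and piece them together into the conjecture in greater generality, rather than seeking a direct extension of the model-theoretic argument of Zilber.
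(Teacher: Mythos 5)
Your proposal matches the paper's own treatment: the statement is posed as a conjecture precisely because the needed extension of Zilber's theorem to generalized Jacobians (Conjecture~\ref{conj:zilber}) is open, and your reduction --- recovering the multiset of Abel--Jacobi images from the character sums $S_n(\chi)$ via Fourier inversion and then invoking that conjecture --- is exactly the paper's Theorem~\ref{thm:equivalence}, whose proof follows \cite{BV} nearly verbatim with $\fm$-equivalence replacing linear equivalence. The only detail the paper handles that you gloss over is injectivity of the Abel--Jacobi map in the genus-zero case, where points are linearly equivalent but must be shown inequivalent modulo a modulus of degree at least two.
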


\begin{remark}
More precisely, the last condition means that 
there exists an integer $m$, an isomorphism $\alpha: \Frob^m(C) \simeq C'$, and a generalized Frobenius $\beta: \overline{\F}_q \to \overline{\F}_q$ restricting to $\Frob^{-m}$ on $\F_q$ such that $\alpha \circ \beta : C \to C'$ sends $M$ to $M'$, $\fm$ to $\fm'$, and such that $\alpha \circ \beta$ induces $\psi$ on generalized Jacobians.  
\end{remark}

Conjecture \ref{conj:lfun} was proved in \cite{BV} for $\fm = (0)$ as a consequence of a result conjectured 
by Bogomolov, Korotiaev and Tschinkel \cite{BKT} and proved by
Zilber \cites{Z1,Z2}.  That result is the $\fm = (0)$ case of the following
conjecture.

\begin{Conjecture} \label{conj:zilber}
Let $(C,[M])$ and $(C',[M'])$ be smooth, projective, irreducible pointed curves over $\F_q$
and $\fm,\fm'$ moduli on $C,C'$ respectively.  Suppose the generalized
Jacobians $J_{\fm}$ and $J_{\fm'}'$ have dimension at least two, and let $U , U'$ be the complements of the support of $\fm,\fm'$ in $C,C'$ respectively.  If  $\psi : J_{\fm}(\overline{\F}_q) \to J_{\fm'}'(\overline{\F}_q)$ is a group isomorphism such that $\psi(U(\overline{\F}_q)) = U'(\overline{\F}_q)$ (with respect to the Abel-Jacobi embeddings determined by $M$ and $M'$), then $\psi$ arises from a morphism of curves composed with a limit of Frobenius maps which sends $\fm$ to $\fm'$.

 More precisely, there exists an integer $m$, an isomorphism $\alpha: \Frob^m(C) \simeq C'$, and a generalized Frobenius $\beta: \overline{\F}_q \to \overline{\F}_q$ restricting to $\Frob^{-m}$ on $\F_q$ such that 
$\alpha \circ \beta : C \to C'$ sends $\fm$ to $\fm'$ and $M$ to $M'$, and that
induces $\psi$ on generalized Jacobians.
\end{Conjecture}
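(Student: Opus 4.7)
The plan is to adapt the model-theoretic argument of Bogomolov-Korotiaev-Tschinkel and Zilber \cites{BKT,Z1,Z2} directly to the setting of generalized Jacobians. Their proof of the $\fm = (0)$ case recovers a curve $C$ from the pair consisting of the abstract group $J_C(\overline{\F}_q)$ together with the Abel-Jacobi image of $C(\overline{\F}_q)$; the reconstruction is anchored by the group configuration expressing the fact that three points on $C$ form a divisor linearly equivalent to $3[M]$ precisely when their Abel-Jacobi images sum to $0$ in $J_C$. In the generalized setting the analogous configuration on the Abel-Jacobi image of $U(\overline{\F}_q)$ inside $J_\fm(\overline{\F}_q)$ encodes $\fm$-linear equivalence rather than ordinary linear equivalence, is still invariant under $\psi$, and should carry enough information to reconstruct $(C, M, \fm)$ from $(J_\fm(\overline{\F}_q), U(\overline{\F}_q))$, which is what Conjecture~\ref{conj:zilber} demands.

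A perhaps cleaner route is to pass first to ordinary Jacobians using the canonical extension
\[
0 \to L_\fm \to J_\fm \to J_C \to 0
\]
of \cite{Serre}, in which $L_\fm$ is a commutative linear algebraic group whose toric part is indexed by tame ramification in $\fm$ and whose unipotent part is indexed by wild ramification. If one can prove $\psi(L_\fm(\overline{\F}_q)) = L_{\fm'}'(\overline{\F}_q)$, then the induced $\overline\psi: J_C(\overline{\F}_q) \to J_{C'}(\overline{\F}_q)$ sends a cofinite subset of the Abel-Jacobi image of $C$ to that of $C'$, and after extending across the finitely many missing points by uniqueness of the Zariski closure, the classical $\fm = (0)$ theorem produces the isomorphism $\alpha \circ \beta$; matching the divisors with multiplicities then follows from comparing the local decomposition $L_\fm \simeq \prod_v L_{\fm, v}$ under $\psi|_{L_\fm}$, since each local factor is determined by the multiplicity of $v$ in $\fm$ through its higher-unit filtration. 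The crux of this route is characterizing $L_\fm(\overline{\F}_q)$ intrinsically: pure torsion information is insufficient, since for $\ell \neq p$ the $\ell$-primary sequence $0 \to \mu_{\ell^\infty}^t \to J_\fm[\ell^\infty] \to J_C[\ell^\infty] \to 0$ splits non-canonically, so one must combine it with the geometric data carried by the Abel-Jacobi image, perhaps defining $L_\fm(\overline{\F}_q)$ as the largest subgroup $H$ modulo which the image admits an equivalent group configuration.

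The main obstacle in either approach is wild ramification. The unipotent part of $L_\fm(\overline{\F}_q)$ is an infinite elementary abelian $p$-group that interacts with $J_C[p^\infty](\overline{\F}_q)$ in a non-canonical fashion, and the higher unit filtrations encoding wild multiplicities in $\fm$ are not directly visible from the abstract group of $\overline{\F}_q$-points. This is consistent with the paper establishing only a special case via Theorem~\ref{thm:p1zilber}, where the underlying curve is $\PP^1$, the linear group $L_\fm$ is purely toric, and the geometry is sufficiently explicit that the four removed points can be recovered directly.
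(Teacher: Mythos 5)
The statement you are addressing is stated in the paper as a \emph{conjecture}, and the paper does not prove it in general: it establishes only the special case where $C=C'=\PP^1_{\F_q}$ and the moduli are sums of three distinct rational points, and it does so by a route entirely different from yours --- it proves a strong form of Conjecture~\ref{conj:lfun} in that setting (Theorem~\ref{thm:p1zilber}, via the Cartier operator and R\"uck's congruence for $L$-functions), observes that the generalized Jacobians are then tori on which Frobenius is multiplication by $q$ and hence commutes with any abstract group isomorphism $\psi$, and invokes the converse direction of Theorem~\ref{thm:equivalence}. Your proposal, by contrast, is a programme for the general statement, and it contains genuine gaps that you only partly acknowledge. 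In your first route, the assertion that the group configuration on the Abel--Jacobi image of $U(\overline{\F}_q)$ inside $J_\fm(\overline{\F}_q)$ ``should carry enough information to reconstruct $(C,M,\fm)$'' is a restatement of the conjecture, not an argument: the Bogomolov--Korotiaev--Tschinkel/Zilber reconstruction leans on structural features of abelian varieties (the behaviour of torsion, the curve generating its Jacobian, the interplay with linear equivalence), and no reason is given that these survive when the ambient group is an extension by a torus and a unipotent group.

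Your second route has two concrete failure points beyond the conditional step you flag. First, even if one grants $\psi(L_\fm(\overline{\F}_q))=L'_{\fm'}(\overline{\F}_q)$, the induced map $\overline\psi$ on $J_C(\overline{\F}_q)$ is still only an abstract group isomorphism carrying the image of $U(\overline{\F}_q)$ to that of $U'(\overline{\F}_q)$; the $\fm=(0)$ theorem you want to cite takes as input the \emph{full} Abel--Jacobi image of $C$, and ``extending across the finitely many missing points by uniqueness of the Zariski closure'' is not available at that stage, since $\overline\psi$ is not yet known to be algebraic and an abstract isomorphism need not respect Zariski closures. Second, the claim that the multiplicities of $\fm$ can be read off from ``the local decomposition $L_\fm\simeq\prod_v L_{\fm,v}$ under $\psi|_{L_\fm}$'' presupposes that $\psi$ respects that decomposition and its higher-unit filtration, which is exactly the kind of rigidity the conjecture is asking one to establish; for the unipotent (wild) part these filtrations are invisible to the abstract group structure, as you note. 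So the proposal neither proves the conjecture nor recovers the special case the paper actually does prove; for the latter, the paper's mechanism is explicit computation of $L$-functions modulo $p$ rather than any structural analysis of $L_\fm$.
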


The following result relates the two above conjectures.

\begin{Theorem} \label{thm:equivalence}
Conjecture \ref{conj:zilber} implies Conjecture \ref{conj:lfun}. Conversely,
assuming Conjecture \ref{conj:lfun}, 
the hypotheses of Conjecture \ref{conj:zilber} and, in addition, that
a power of Frobenius commutes with $\psi$, then the conclusion of 
Conjecture \ref{conj:zilber} holds.
\end{Theorem}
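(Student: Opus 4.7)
The plan is to treat the two implications separately. Both rely on formula \eqref{eq:standard}, which recovers the character sums $S_n(\chi)$ from $L(T,C,\chi)$, together with orthogonality of characters on the finite abelian groups $J_\fm(\F_{q^n})$.

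For \emph{Conjecture \ref{conj:zilber} implies Conjecture \ref{conj:lfun}}, assume the hypotheses of Conjecture \ref{conj:lfun}. Applying \eqref{eq:standard} (with $C$ and $\fm$ base-changed to $\F_{q^n}$) to the $L$-function equality
\[ L(T, C \otimes \F_{q^n}, \chi) = L(T, C' \otimes \F_{q^n}, \chi \circ \psi|_{J_{\fm'}'(\F_{q^n})}) \]
converts it into equality of the corresponding character sums $S_n^{C}(\chi)$ and $S_n^{C'}(\chi\circ\psi)$ for all $n \ge 1$ and all characters $\chi$ of $J_\fm(\F_{q^n})$. Letting $\chi$ range over all such characters and invoking orthogonality on the finite group $J_\fm(\F_{q^n})$, we upgrade this to equality of the corresponding multisets of Abel-Jacobi images in $J_\fm(\F_{q^n})$. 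Because $\dim J_\fm \ge 2$, the Abel-Jacobi embedding $U \hookrightarrow J_\fm$ is injective, so the multisets are honest sets and $\psi$ induces a bijection $U'(\F_{q^n}) \to U(\F_{q^n})$ compatible with Abel-Jacobi for every $n$. These compatible bijections assemble to show that $\psi$ is a group isomorphism $J_{\fm'}'(\Fbar_q) \to J_\fm(\Fbar_q)$ with $\psi^{-1}$ taking $U(\Fbar_q)$ bijectively onto $U'(\Fbar_q)$. Applying Conjecture \ref{conj:zilber} to $\psi^{-1}$ delivers the morphism required by the conclusion of Conjecture \ref{conj:lfun}.

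For the converse, assume $\psi \colon J_\fm \to J_{\fm'}'$ satisfies the hypotheses of Conjecture \ref{conj:zilber} and that $\Phi^m$ commutes with $\psi$ for some $m \ge 1$. After replacing $\F_q$ by $\F_{q^m}$ (which one undoes by descent at the end), we may assume $\psi$ commutes with $\Phi$ itself. Then $\psi$ restricts to group isomorphisms $J_\fm(\F_{q^n}) \to J_{\fm'}'(\F_{q^n})$ for every $n$ and, combined with the hypothesis $\psi(U(\Fbar_q)) = U'(\Fbar_q)$, bijects $U(\F_{q^n})$ with $U'(\F_{q^n})$. Substituting $P' = \psi(P)$ in the character sum for $C'$ and using the Frobenius-equivariance of $\psi$ yields $S_n^{C'}(\chi \circ \psi^{-1}) = S_n^{C}(\chi)$ for all $n$ and all characters $\chi$ of $J_\fm(\F_{q^n})$. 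By \eqref{eq:standard} the corresponding $L$-functions agree, so Conjecture \ref{conj:lfun} applied to $\psi^{-1}$ produces a morphism of curves which, after undoing the base change, satisfies the conclusion of Conjecture \ref{conj:zilber}.

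The main technical burden is bookkeeping: the direction of $\psi$ is opposite in the two conjectures, forcing an inversion in each implication, and the Frobenius-equivariance of $\psi$---automatic from the $L$-function hypothesis in Conjecture \ref{conj:lfun} (since the restrictions of a single set-theoretic $\psi$ to each $J(\F_{q^n})$ give compatible group isomorphisms)---must be imposed as the extra hypothesis $\Phi^m \psi = \psi \Phi^m$ in the converse direction to make the reduction to Conjecture \ref{conj:lfun} go through. The dimension hypothesis $\dim J_\fm \ge 2$ is used exactly once, to ensure that Abel-Jacobi is injective so that equality of multisets becomes equality of sets in the forward direction.
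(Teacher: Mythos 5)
Your overall route coincides with the paper's. The converse direction (Conjecture~\ref{conj:lfun} plus Frobenius-equivariance implies the conclusion of Conjecture~\ref{conj:zilber}) is exactly the paper's argument: commuting with $\Phi^m$ lets the exponential sums $S_n(\chi)$ of \eqref{eq:standard} be matched for $m \mid n$, giving equality of $L$-functions over extensions of $\F_{q^m}$ and hence the morphism. The forward direction is the argument of \cite[Theorem 2.5]{BV}, which the paper simply cites after replacing linear equivalence by the stricter $\fm$-equivalence; your reconstruction via orthogonality of characters is faithful to it.

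The one genuine gap is the sentence ``Because $\dim J_\fm \ge 2$, the Abel--Jacobi embedding $U \hookrightarrow J_\fm$ is injective.'' This is true, but it is precisely the step the paper has to prove, and it is not a formal consequence of the dimension bound. For genus at least one it follows from injectivity of the classical Abel--Jacobi map (compose with the projection $J_\fm \to J$). For genus zero, however, all points of $\PP^1$ are linearly equivalent, so one must show that two distinct points $a,b$ outside the support of $\fm$ are never $\fm$-equivalent. The paper does this using that such a modulus has degree at least two: taking $\infty$ in the support, $\fm$-equivalence forces $(x-a)/(x-b) \equiv 1 \pmod{\fm}$; if the support contains a second point $c$ this gives $(c-a)/(c-b)=1$, hence $a=b$, while if $\fm$ is supported only at $\infty$ the condition modulo $2\infty$ together with the expansion $(x-a)/(x-b) = 1 + (b-a)/x + O(1/x^2)$ again forces $a=b$. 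Without this, your passage from equality of multisets in $J_\fm(\F_{q^n})$ to a bijection of point sets $U'(\F_{q^n}) \to U(\F_{q^n})$ is unjustified when the curves have genus zero.
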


\begin{proof}
Assume Conjecture \ref{conj:lfun} and, in addition, that the $m$-th
power of Frobenius commutes with $\psi$.
It follows directly that the exponential sums $S_n(\chi)$ for $m|n$ in \eqref{eq:standard} match for each character.
Hence \eqref{eq:standard} gives equality of
$L$-functions over extensions of $\F_{q^m}$ and from that we get the morphism. 

The proof in the opposite direction is almost verbatim the proof of
\cite[Theorem 2.5]{BV} replacing linear equivalence of divisors with
the stricter equivalence modulo the fixed modulus. The argument there works without 
change unless the curves have genus zero. In this latter case, points
are linearly equivalent to each other so we need to show that they are not equivalent
modulo a fixed modulus under the hypothesis of Conjecture \ref{conj:zilber}. Such a modulus has degree at least two. 

If two points in $\PP^1$ given by $a, b$ are equivalent modulo $\fm$ and
we assume (without loss of generality) that $\infty$ is in the support of $\fm$, then we conclude that
$(x-a)/(x-b) \equiv 1 \pmod{\fm}$. We distinguish two cases. First assume that there is
a point $c$ other than $\infty$ in the support of $\fm$. As
$(x-a)/(x-b) \equiv 1 \pmod{\fm}$ we see $(c-a)/(c-b) = 1$ and hence $a=b$.
If $\fm$ is supported only at infinity, our hypothesis requires 
$(x-a)/(x-b) \equiv 1 \pmod{2\infty}$. But
$$(x-a)/(x-b) = (1-a/x)/(1-b/x) = 1 +(b-a)/x +O(1/x^2)$$
and hence $(x-a)/(x-b) \equiv 1 \pmod{2\infty}$ only if $a=b$.
\end{proof}

\begin{Remark}
In the next section, we will prove a strong form of Conjecture \ref{conj:lfun}
when $C = \PP^1$ and $\fm$ is a sum of four distinct places of degree one. 
Under these hypotheses
the generalized Jacobians are tori \cite[VI \S 3]{Serre}.
For tori, the $\F_q$-Frobenius is multiplication by $q$ in the group law so it commutes with $\psi$
automatically. Theorem \ref{thm:equivalence} will show that Conjecture \ref{conj:zilber} holds in this case.

In \cite[Theorem 5.11]{BKT} it was proved that a power of Frobenius commutes with $\psi$ in the case
$\fm =(0)$ before conjecture \ref{conj:zilber} was proved for $\fm=(0)$. It would be interesting to see if the
argument there can be generalized.

If the generalized Jacobians have dimension one, the condition on the image
of the Abel-Jacobi embedding in Conjecture \ref{conj:zilber} is clearly vacuous and
the condition on the $L$-functions in Conjecture \ref{conj:lfun} can also
be shown to be vacuous. In the special case that the curves have genus one
and both moduli are $(0)$, \cite[Remark 2.7]{BV} provides examples where 
the conclusion of both conjectures fail. The other possibility is that
the curves have genus zero and the moduli have degree two. In this case,
the curves are automatically isomorphic under the conjectures' hypotheses
but it is possible to construct
group isomorphisms $\psi$ that do not arise from morphisms of curves.
\end{Remark}

\section{The Projective Line}
\label{sec:p1}

Let $q=p^r$ be an odd prime power.  In this section, we will show how to recover $\PP^1_{\F_q}$ with four marked rational points from $L$-functions of covers unramified away from the marked points.

\subsection{Covers of the Projective Line}

For $\lambda \in \F_q \backslash \{0,1\}$, consider the modulus  $$\fm(\lambda) = [0] + [1] + [\lambda]$$ on $\PP^1_{\F_q}$  and let $U_\lambda \colonequals \PP^1_{\F_q} \backslash \{0,1, \lambda\}$.  
\begin{Definition}
Let $C_\lambda$ be the projective curve over $\F_q$ given by the projective equation
\begin{equation} \label{eq:clambda}
 \lambda x^{q-1} - y^{q-1} + (1-\lambda)z^{q-1} =0.
 \end{equation}
Let $\pi : C_\lambda \to \PP^1_{\F_q}$ be the map sending $[x,y,z]$ to $[z^{q-1},z^{q-1}-x^{q-1}]$.  
\end{Definition}

A straightforward calculation checks that $C_\lambda$ is smooth.  Note there is a natural action of $\mu_{q-1}^3$ modulo the diagonal $\mu_{q-1}$, acting by roots of unity on the three coordinates.  

\begin{Proposition} \label{prop:rcf}
The cover $\pi: C_\lambda \to \PP^1_{\F_q}$ is the ray class field cover of $\PP^1_{\F_q}$ with respect to $\fm(\lambda)$ in which the infinite place splits completely.  In particular, 
\begin{enumerate}
\item \label{rcf1}  $\pi$ is a degree $(q-1)^2$ branched Galois cover with Galois group $\mu_{q-1}^2$;
\item \label{rcf2} there are $q-1$ points of $C_\lambda$ with ramification index $q-1$ above each point of $\fm(\lambda)$, and the infinite place splits completely;
\item \label{rcf3} over $U_\lambda$, $\pi$ is the pullback of $\Phi^* - 1 : J_{\fm(\lambda)} \to J_{\fm(\lambda)}$ by the Abel-Jacobi map $U_\lambda \to J_{\fm(\lambda)}$ which sends $t$ to $[t] - [\infty]$ (as before $\Phi$ is the Frobenius map on $J_\fm$).
\end{enumerate}
\end{Proposition}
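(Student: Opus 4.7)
My plan is to present $\pi$ explicitly as a Kummer cover with group $\mu_{q-1}^2$ on an affine chart, verify (1) and (2) directly using Kummer theory, and then identify $\pi$ with the ray class field cover by matching the conductor via Lemma~\ref{lem:maximality} and matching the degree via a direct computation of $|J_{\fm(\lambda)}(\F_q)|$, so that (3) follows immediately from Proposition~\ref{prop:cftgj}.

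First I set up the local presentation. Take $s$ to be the affine coordinate on the target $\PP^1$ for which $\pi$ pulls $s$ back to $z^{q-1}/(z^{q-1} - x^{q-1})$; then $s = \infty$ corresponds to the condition $z^{q-1} = x^{q-1}$, which will turn out to be the splitting place. In the affine chart $z = 1$ of $C_\lambda$ this gives $x^{q-1} = (s-1)/s$, and substituting into the defining equation of $C_\lambda$ yields $y^{q-1} = (s-\lambda)/s$. Thus $\pi^{-1}(U_\lambda) \to U_\lambda$ is the compositum of the two Kummer covers of $K = \F_q(s)$ defined by $(s-1)/s$ and $(s-\lambda)/s$. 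The divisors of these functions, $[1] - [0]$ and $[\lambda] - [0]$, are linearly independent modulo $(q-1)\Div(\PP^1_{\F_q})$ since $0, 1, \lambda$ are distinct. As $\mu_{q-1} \subset \F_q^\times$, Kummer theory gives that the compositum is Galois of degree $(q-1)^2$ over $K$ with group $\mu_{q-1}^2$, proving (1).

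For (2), I analyze the Kummer invariants place by place. At each of $s = 0, 1, \lambda$, one invariant has a simple zero or pole while the other is a unit with nonzero residue; for instance at $s = 1$, $x^{q-1} = (s-1)/s$ has a simple zero so the $x$-extension is totally ramified of index $q-1$ while $y^{q-1}$ evaluates to $1 - \lambda \neq 0$. At $s = 0$ both invariants have simple poles, but $(y/x)^{q-1} = (s-\lambda)/(s-1)$ is a unit, so switching to the Kummer generators $x$ and $y/x$ separates the ramified and unramified contributions. In each case, geometrically there are $q-1$ points of $C_\lambda$ above the base point, each of ramification index $q-1$. At $s = \infty$ both $(s-1)/s$ and $(s-\lambda)/s$ reduce to $1 \in (\F_q^\times)^{q-1}$, so the Kummer covers split completely, yielding $(q-1)^2$ places of $C_\lambda$ (all with residue field $\F_q$) above $\infty$.

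For (3), since $\gcd(q-1, p) = 1$ the cover $\pi$ is tamely ramified, so its conductor at each ramified place is exactly $1$ and the total conductor equals $\fm(\lambda)$. Lemma~\ref{lem:maximality} then embeds $\F_q(C_\lambda)$ into $K_{\{\infty\}}^{\fm(\lambda)}$. To deduce equality I will compute $|J_{\fm(\lambda)}(\F_q)|$ directly: since every degree-zero divisor on $\PP^1$ is principal, evaluation at $\{0, 1, \lambda\}$ identifies $J_{\fm(\lambda)}(\F_q)$ with $(\F_q^\times)^3 / \F_q^\times \cong (\F_q^\times)^2$, of order $(q-1)^2 = [\F_q(C_\lambda) : K]$. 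Hence the two extensions coincide and (3) follows from Proposition~\ref{prop:cftgj}. The main technical point is the ramification analysis at $s = 0$, where both Kummer invariants have poles and one must change generators in order to see the correct ramification structure.
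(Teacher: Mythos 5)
Your proof is correct, but it runs in the opposite direction from the paper's. The paper starts from the generalized Jacobian: it identifies $J_{\fm(\lambda)}\simeq \Gm^3/\Gm\simeq\Gm^2$, writes the Abel--Jacobi map as $t\mapsto(1-1/t,\,1-\lambda/t)$, and pulls back the Lang isogeny $\Phi^*-1$ (the $(q-1)$-power map on each $\Gm$ factor) to obtain exactly the equations $x^{q-1}=1-1/t$, $y^{q-1}=1-\lambda/t$, hence $C_\lambda$ and $\pi$; part (\ref{rcf3}) is then true by construction, parts (\ref{rcf1}) and (\ref{rcf2}) are read off, and maximality follows from Lemma~\ref{lem:maximality} together with the class-field-theoretic computation of $\Gal(K_S^{\fm}/K)$. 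You instead start from the equation of $C_\lambda$, recognize the same two relations as a Kummer compositum, prove (\ref{rcf1}) and (\ref{rcf2}) by Kummer theory and a place-by-place analysis, and only deduce (\ref{rcf3}) at the end by matching the conductor and the degree $(q-1)^2=|J_{\fm(\lambda)}(\F_q)|$ and invoking Proposition~\ref{prop:cftgj}. The two routes rest on the same computations (the Lang isogeny on $\Gm$ over $\F_q$ \emph{is} the $(q-1)$-power map, so your Kummer cover and the paper's isogeny pullback are literally the same cover), but yours buys a more explicit verification of the ramification data --- in particular your change of Kummer generators at $s=0$, replacing $y$ by $y/x$, is precisely the content of the paper's separate Lemma~\ref{lem:inertia} identifying the inertia over $0$ with the diagonal $\mu_{q-1}$ --- at the cost of having to argue separately that the conductor is exactly $\fm(\lambda)$ and that the degrees match, steps the paper gets for free from its top-down construction.
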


\begin{proof}
We know that $J_{\fm(\lambda)}$ is isomorphic to $\Gm^3$ modulo the diagonal $\Gm$ \cite[V \S3.13 Prop 7]{Serre};
hence $J_{\fm(\lambda)} \simeq \Gm^2$ by choosing the first coordinate to be $1$.  The Abel-Jacobi map sends $t$ to the divisor $[t]-[\infty]$ which is the divisor of the function $f(x)=x-t$.  Thus the Abel-Jacobi map is given by sending $t$ to $[(-t,1-t,\lambda-t)]$ in $\Gm^3/\Gm$, or equivalently sending $t$ to $(1-1/t,1-\lambda/t)$ in $J_{\fm(\lambda)} \simeq \Gm^2$.
Letting $x$ and $y$ be the coordinates of $J_{\fm(\lambda)}$, as the Frobenius on $\Gm$ is the $q$th power map pulling back $\Phi^* - 1$ gives the affine curve 
\[
x^{q-1} = (1-1/t), \quad y^{q-1} = (1 - \lambda/t).
\]
Eliminating $t$ and writing it as a projective equation gives \eqref{eq:clambda} and the map $\pi$.  We directly see the claim about ramification.  Note that $\infty$ splits completely over $\F_q$ as the Abel-Jacobi map sends $\infty$ to $(1,1) \in \Gm^2$ and the fiber over $(1,1)$ is $\mu_{q-1} \times \mu_{q-1}$.

This shows that $C_\lambda$ is the ray class field cover of $\PP^1_{\F_q}$ with respect to $\fm(\lambda)$ in which the infinite place splits completely.  The Galois group is identified with $J_{\fm(\lambda)}(\F_q) \simeq \mu_{q-1}^2$ by having the group act via multiplication by roots of unity on $x$ and $y$.
\end{proof}

\begin{Corollary}
The genus of $C_\lambda$ is $\frac{1}{2} (q-2)(q-3)$.
\end{Corollary}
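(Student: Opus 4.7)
The plan is to apply Riemann--Hurwitz to the Galois cover $\pi : C_\lambda \to \PP^1_{\F_q}$, using the degree and ramification data already supplied by Proposition~\ref{prop:rcf}. (A quick sanity check: since $C_\lambda$ is a smooth plane curve of degree $q-1$ in $\PP^2$, the adjunction formula gives $g = \tfrac{(q-2)(q-3)}{2}$ immediately, so we know the target value; the point of the proof is to record the computation in the form used elsewhere.)

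By Proposition~\ref{prop:rcf}, $\pi$ has degree $(q-1)^2$ and is branched only above the three $\F_q$-rational points $0,1,\lambda$. Above each of these three branch points there are exactly $q-1$ points of $C_\lambda$, each with ramification index $q-1$. Since the cover is tame (its degree is prime to $p$ as $q$ is a power of $p$ and $q-1$ is coprime to $p$), the contribution of each point above a branch point to the different is $(q-1)-1 = q-2$, so each of the three branch points contributes $(q-1)(q-2)$ to the degree of the ramification divisor.

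Applying Riemann--Hurwitz with $g(\PP^1) = 0$, I would write
\begin{equation*}
2 g(C_\lambda) - 2 \;=\; (q-1)^2 \cdot (-2) \;+\; 3 (q-1)(q-2).
\end{equation*}
Factoring out $q-1$ from the right-hand side gives $(q-1)\bigl(-2(q-1) + 3(q-2)\bigr) = (q-1)(q-4)$, and then $2 g(C_\lambda) = (q-1)(q-4) + 2 = q^2 - 5q + 6 = (q-2)(q-3)$, yielding $g(C_\lambda) = \tfrac{1}{2}(q-2)(q-3)$ as claimed.

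There is no real obstacle here; the only thing worth double-checking is the tameness hypothesis (so that the different exponent is $e-1$ rather than something larger), which holds because $\gcd(q-1,p)=1$.
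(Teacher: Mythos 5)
Your proof is correct and follows the same route the paper indicates: the paper's proof simply cites the Riemann--Hurwitz formula (or, alternatively, the genus formula for a smooth plane curve of degree $q-1$), and you have carried out the Riemann--Hurwitz computation in full, with the tameness check and the ramification data from Proposition~\ref{prop:rcf} correctly applied. Nothing to add.
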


\begin{proof}
This follows from the Riemann-Hurwitz formula, or the fact that $C_\lambda$ is a smooth plane curve of degree $q-1$.
\end{proof}

We now record some additional information about $C_\lambda$ for use later.
Let $G \colonequals \mu_{q-1}^2 \simeq J_{\fm(\lambda)}(\F_q)$ be the Galois group of $\pi$, acting on $x$ and $y$ via roots of unity.  Let $G_1$ (resp. $G_\lambda$) be copies of $\mu_{q-1}$ embedded into $G$ as the first (resp. second) coordinates, and $G_0$ be a copy of $\mu_{q-1}$ embedded diagonally into $G$. 

\begin{Lemma} \label{lem:inertia}
The inertia groups at the $q-1$ points of $C_\lambda$ over $0$ (resp. $1$, $\lambda$) are $G_0$ (resp. $G_1$, $G_\lambda$).
\end{Lemma}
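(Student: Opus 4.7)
The plan is to identify each inertia group as the pointwise stabilizer of a geometric point in the corresponding fiber, exploiting the explicit equation for $C_\lambda$ together with the $\mu_{q-1}^2$-action inherited from the description $J_{\fm(\lambda)} \simeq \Gm^3/\Gm$ used in the proof of Proposition~\ref{prop:rcf}. Because $\pi$ is abelian, the inertia at any place above a given branch point coincides with the pointwise stabilizer of a geometric point in the fiber, independently of which point is chosen.

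First I record the Galois action in projective coordinates. The proof of Proposition~\ref{prop:rcf} realizes the cover on an affine chart via $x^{q-1} = 1 - 1/t$ and $y^{q-1} = 1 - \lambda/t$, so $G = \mu_{q-1}^2$ acts on these affine coordinates by $(x,y) \mapsto (\alpha x, \beta y)$. Extending to projective coordinates $[x : y : z]$ by letting $G$ fix $z$, the action preserves the defining equation $\lambda x^{q-1} - y^{q-1} + (1-\lambda) z^{q-1} = 0$ and commutes with $\pi$, since $\pi$ depends only on $x^{q-1}$ and $z^{q-1}$.

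I then handle each branch point by substitution. Over $t = 0$, the formula for $\pi$ forces $z = 0$, so the fiber consists of points $[x : y : 0]$ with $y^{q-1} = \lambda x^{q-1}$; the projective equality $[\alpha x : \beta y : 0] = [x : y : 0]$ forces $\alpha = \beta$, giving the diagonal $G_0$. Over $t = 1$ one obtains $x = 0$ and points $[0 : y : z]$; using $z$ to fix the projective scale, $(\alpha, \beta)$ fixes such a point if and only if $\beta = 1$, yielding the first-factor subgroup $G_1$. Over $t = \lambda$ the parallel computation gives $y = 0$ and the second-factor subgroup $G_\lambda$. In each case the stabilizer has order $q - 1$, matching the ramification index from Proposition~\ref{prop:rcf} and thus accounting for all of the inertia (the ramification is tame, with no wild part).

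There is no substantive obstacle: the argument is a direct verification once the Galois action has been pinned down. The only point deserving care is that $G_0$, rather than one of the coordinate subgroups, is the inertia at $0$. This reflects the class field theoretic fact that, on trivializing $\Gm^3/\Gm$ by normalizing the first coordinate, a unit $\mu$ in the first factor of $\Gm^3$ rescales $(x,y)$ to $(\mu^{-1}x, \mu^{-1}y)$; consequently, the image of the inertia at $0$ in the Galois group $\mu_{q-1}^2$ is the diagonal subgroup, consistent with the direct calculation above.
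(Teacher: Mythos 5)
Your proof is correct and follows essentially the same route as the paper's: identify each fiber from the explicit equation and the formula for $\pi$ (namely $z=0$, $x=0$, $y=0$ over $0$, $1$, $\lambda$ respectively) and compute the stabilizer of a geometric point under the $\mu_{q-1}^2$-action. The paper's proof is a one-line version of this computation; your write-up just makes the exact-stabilizer calculation and the order count explicit.
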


\begin{proof}
Computing using $\pi$, we see the points of $C_\lambda$ over $0$ are those with $z=0$, and hence are fixed by $G_0$.  The other branch points are similar.
\end{proof}

\begin{Lemma} \label{lem:differentials}
A basis for the space of regular differentials on the projective curve given by an affine equation $a x^{q-1} + b y^{q-1} + 1=0$ over $\F_q$ is given by $\displaystyle \omega_{i,j} \colonequals \frac{x^i y^j}{y^{q-2}} dx$ where $i,j \geq 0$ and $i+j \leq q-4$.  
\end{Lemma}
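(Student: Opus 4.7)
The plan is to use the standard description of regular differentials on a smooth plane curve. Let $F(x,y) = ax^{q-1} + by^{q-1} + 1$, and let $C$ denote the smooth projective plane curve of degree $d = q-1$ defined by the homogenization of $F$. By the adjunction formula on $\PP^2$, the canonical sheaf of $C$ satisfies $\omega_C \cong \calO_{\PP^2}(d-3)|_C$, and a standard computation (see, e.g., Griffiths--Harris or Hartshorne) identifies the space of global regular differentials on $C$ with
\[
\left\{ \frac{P(x,y)}{F_y(x,y)}\, dx : P \in \F_q[x,y],\ \deg P \le d-3 \right\},
\]
where the polynomials are interpreted as restrictions of sections of $\calO_{\PP^2}(d-3)$ to $C$.

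Applied to our $F$, we compute $F_y = b(q-1)y^{q-2} = -b\,y^{q-2}$ in characteristic $p$, which is nonzero on the smooth curve away from the points where $y = 0$. Substituting and absorbing the constant $-1/b$, the basis claimed in the lemma is exactly the set of $\omega_{i,j} = x^i y^j / y^{q-2}\, dx$ with $P(x,y) = x^i y^j$ ranging over monomials of total degree $i + j \le q-4 = d-3$.

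For a final sanity check, the number of such monomials is $\binom{q-2}{2} = \tfrac{1}{2}(q-2)(q-3)$, which matches the genus of $C_\lambda$ computed in the preceding corollary (and the general formula $\binom{d-1}{2}$ for a smooth plane curve of degree $d$). Linear independence of the $\omega_{i,j}$ is immediate from linear independence of the corresponding monomials $x^i y^j$ in the function field, since $dx$ is a fixed differential and multiplication by $1/y^{q-2}$ is injective. The only step requiring some care would be to verify directly (rather than appealing to the adjunction calculation) that each $\omega_{i,j}$ is regular at the points at infinity and at the points with $y=0$; the main obstacle is checking regularity at $y=0$, which is handled by using the relation $F=0$ to rewrite $y^{q-2}dx$ in terms of $x^{q-2}dy$ via $(q-1)(ax^{q-2}dx + by^{q-2}dy) = 0$, so that $dx/y^{q-2}$ extends regularly across the branch points above $y=0$ (and symmetrically above the branch points over $x=0$, and similarly at infinity using the bound $i+j \le q-4$).
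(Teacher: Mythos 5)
Your proof is correct and is exactly the standard adjunction/plane-curve argument that the paper invokes (its entire proof of this lemma is the sentence ``This is standard.''). The details you supply — $F_y=-by^{q-2}$ in characteristic $p$, the dimension count $\binom{q-2}{2}$ matching the genus, and the regularity check at $y=0$ and at infinity via the relation $ax^{q-2}\,dx=-by^{q-2}\,dy$ — are all accurate.
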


\begin{proof}
This is standard.
\end{proof}

\subsection{L-Functions and the Cartier Operator} \label{ss:lfunctions}
Recall that the group $G = \mu_{q-1}^2$ is the Galois group of the cover $\pi : C_\lambda \to \PP^1_{\F_q}$.
Given a pair $(b,c) \in \Z/(q-1)$, we define a character $\chi_{b,c} : \mu_{q-1}^2 \to \mu_{q-1}$ by
\[
\chi_{b,c}(\zeta_1,\zeta_2) = \zeta_1 ^ b \zeta_2^c .
\]
Any character of $G$ equals $\chi_{b,c}$ for appropriately chosen $(b,c) \in \Z/(q-1)^2$.  Let $H_{b,c}$ denote the kernel of $\chi_{b,c}$.

We will use the Cartier operator to obtain information about the $L$-functions for the cover $\pi : C_\lambda \to \PP^1_{\F_q}$.  For our purposes, the Cartier operator may be viewed as a (semi-linear) operator $\Cartier$ on the space of regular differentials $H^0(C_\lambda, \Omega^1_{C_\lambda})$.  If $q = p^r$, the $r$th power of the Cartier operator on $C_\lambda$ is $\F_q$-linear and may be computed explicitly using the following result.  We define $\alpha_{m,n}$ to be the multinomial coefficient
\begin{equation} \label{eq:defalpha}
\alpha_{m,n} \colonequals \binom{q-1}{m,n,q-1-m-n} = \frac{(q-1)!}{m!\,n!\,(q-1-m-n)!}.
\end{equation}

\begin{Proposition} \label{prop:cartieraction}
Let $X$ be the projective curve over $\F_q$ determined by the equation $f(x,y)=ax^{q-1}+by^{q-1}+1=0$ with $ab \neq 0$.  Writing $q = p^r$, the $r$-th power of the Cartier operator acts diagonalizably on $H^0(X,\Omega^1_X)$ with eigenvalues
\[
\alpha_{i+1,j+1} a^{i+1} b^{j+1} \quad \text{for } 0 \leq i,j \text{ with }  i+j \leq q-4.
\]
The corresponding eigenvectors are $\displaystyle \omega_{i,j} = \frac{x^i y^j }{y^{q-2}} dx$.
\end{Proposition}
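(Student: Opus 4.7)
The plan is to combine a torus-symmetry argument (which gives diagonalizability and identifies the eigenvectors) with an explicit local computation (which pins down the eigenvalues).

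First, I would observe that $\mu_{q-1}\times\mu_{q-1}$ acts on $X$ by $(x,y)\mapsto(\zeta_1 x,\zeta_2 y)$, preserving $f=0$. Under this action $\omega_{i,j}=x^i y^{j+2-q}\,dx$ has character $(\zeta_1,\zeta_2)\mapsto \zeta_1^{i+1}\zeta_2^{j+1}$, using $\zeta_2^{q-1}=1$ to rewrite $\zeta_2^{j+2-q}$ as $\zeta_2^{j+1}$. For $0\le i,j$ with $i+j\le q-4$, the pairs $(i+1,j+1)$ lie in $\{1,\dots,q-3\}^2$ and are therefore pairwise distinct modulo $q-1$. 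Hence the $\omega_{i,j}$ span distinct one-dimensional eigenspaces. Since $\Cartier^r$ is $\F_q$-linear and commutes with pullback by $\F_q$-automorphisms of $X$, it preserves each eigenspace, forcing each $\omega_{i,j}$ to be an eigenvector of $\Cartier^r$.

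To compute the eigenvalue $\mu_{i,j}$, I would work locally at the smooth point $P=(0,y_0)$ with $y_0^{q-1}=-b^{-1}$, using $x$ as a local parameter. Writing $y=y_0 u$ and using $u^{q-1}=u(x^q)/u(x)$ (valid for $u\in\F_q[[x]]$), the equation $u^{q-1}=1+ax^{q-1}$ can be solved by matching coefficients, yielding
\[
u \equiv \frac{1}{1+ax^{q-1}} \pmod{x^{q(q-1)}}.
\]
Within this precision,
\[
x^i y^{j+2-q} = y_0^{j+2-q}\, x^i (1+ax^{q-1})^{q-2-j} = y_0^{j+2-q}\sum_{k=0}^{q-2-j}\binom{q-2-j}{k} a^k x^{i+k(q-1)}.
\]
The standard local formula $\Cartier^r\bigl(\sum c_m x^m\,dx\bigr)=\sum c_{(k+1)q-1}^{1/q} x^k\,dx$, combined with the eigenvector relation $\Cartier^r(\omega_{i,j})=\mu_{i,j}\omega_{i,j}$ and $\mu_{i,j}\in\F_q$ (so $\mu_{i,j}^q=\mu_{i,j}$), gives $\mu_{i,j}=c_{(i+1)q-1}/c_i^q$. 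Reading off $c_i=y_0^{j+2-q}$ and $c_{(i+1)q-1}=c_i\binom{q-2-j}{i+1} a^{i+1}$, substituting $y_0^q=-y_0/b$, and invoking Lucas' theorem $\binom{q-1}{j+1}\equiv(-1)^{j+1}\pmod p$ to identify $(-1)^{j+1}\binom{q-2-j}{i+1}=\binom{q-1}{j+1}\binom{q-2-j}{i+1}=\alpha_{i+1,j+1}$, one arrives at $\mu_{i,j}=\alpha_{i+1,j+1}\, a^{i+1} b^{j+1}$.

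I expect the main technical step to be establishing the local approximation $y\equiv y_0/(1+ax^{q-1})\pmod{x^{q(q-1)}}$; this reduces to a linear recurrence on the Taylor coefficients of $u$ that, fortunately, closes in a geometric series within the relevant range of degrees. Everything else is bookkeeping with binomial coefficients modulo $p$.
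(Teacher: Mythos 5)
Your proof is correct, but it takes a genuinely different route from the paper's. The paper invokes the St\"ohr--Voloch formula for the Cartier operator on plane curves, $\Cartier^{r}(h f_y^{-1}dx) = \bigl((\partial/\partial x)^{(q-1)}(\partial/\partial y)^{(q-1)}(f^{q-1}h)\bigr)^{1/q} f_y^{-1}dx$, and a single multinomial expansion of $f^{q-1}x^iy^j$ isolates the one surviving term, delivering diagonalizability and the eigenvalue $\alpha_{i+1,j+1}a^{i+1}b^{j+1}$ in one stroke. You instead split the work: the $\mu_{q-1}\times\mu_{q-1}$-equivariance argument (each $\omega_{i,j}$ spans a distinct character eigenspace, and $\Cartier^r$, being functorial and $\F_q$-linear, preserves these) gives diagonalizability for free, and then a local expansion at a point $(0,y_0)$ over $x=0$ extracts the eigenvalue from two Taylor coefficients. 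I checked the delicate points and they hold: the approximation $u\equiv(1+ax^{q-1})^{-1}\pmod{x^{q(q-1)}}$ follows from $u(x^q)=u(x)(1+ax^{q-1})$ for $u\in\F_q[[x]]$ together with the fact that $(1+w)^{q-1}\equiv 1$ forces $\ord(w)\geq q(q-1)$; the needed coefficient sits at degree $(i+1)q-1<q(q-1)$, safely below the error term; the relation $\mu_{i,j}=c_{(i+1)q-1}/c_i^q$ is legitimate once you know $\mu_{i,j}\in\F_q$ (which follows since $\Cartier^r$ is an $\F_q$-linear operator with an $\F_q$-rational eigenvector); and the sign bookkeeping $(-1)^{j+1}\binom{q-2-j}{i+1}=\binom{q-1}{j+1}\binom{q-2-j}{i+1}=\alpha_{i+1,j+1}$ via Lucas is exactly right. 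Note that, like the paper, you tacitly rely on Lemma~\ref{lem:differentials} to know the $\omega_{i,j}$ exhaust $H^0(X,\Omega^1_X)$. What the two approaches buy: the paper's is shorter given the cited formula; yours is self-contained modulo the standard local description of the Cartier operator, and the symmetry half of the argument generalizes cleanly to other curves with large automorphism groups, though the local computation is somewhat more fragile (it depends on choosing a point where the expansion closes up quickly).
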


Note that the eigenvalues are non-zero if $q=p$ but can be zero when
$q \ne p$ as $\alpha_{i+1,j+1}$ may be zero modulo $p$.

\begin{proof}
By \cite[Theorem 1.1 and proof of Theorem 4.1]{StV} the $r$-th power of the Cartier operator on a regular differential $h f_y^{-1} dx$ (with $h \in \F_q[x,y]$)
is given by 
$$
\mathscr{C}^{r}\left(h f_{y}^{-1} d x\right)=\left(\left(\frac{\partial}{\partial x}\right)^{(q-1)}\left(\frac{\partial}{\partial y}\right)^{(q-1)}\left(f^{q-1} h\right)\right)^{1/q} f_{y}^{-1} d x
$$
where $\left(\frac{\partial}{\partial x}\right)^{(q-1)}$ and $\left(\frac{\partial}{\partial y}\right)^{(q-1)}$ are partial Hasse derivatives.  
The space of regular differentials of $X$ is 
$\left\{h f_{y}^{-1} d x | h \in \F_q[x, y] , \deg h \le q-4\right\}$ as in Lemma \ref{lem:differentials}.

We will see that the action of $\mathscr{C}^{r}$ on the basis obtained by taking $h=x^i y^j$ is diagonal. 
To see this, notice that $\left(\frac{\partial}{\partial y}\right)^{(q-1)} ( x^i y^j) =0$ unless $j \equiv q-1 \pmod{q}$, with a similar statement for the partial Hasse derivative with respect to $x$. The only term of $f^{q-1} x^i y^j = (a x^{q-1} + b y^{q-1} +1)^{q-1} x^i y^j$ which contributes 
is the $x^{(i+1)(q-1)} y^{(j+1) (q-1)}$ term of $ f^{q-1}$.  Thus we see that
\begin{align*}
\Cartier^{r} \left( x^i y^j f_y^{-1} dx  \right) &=  \left( \left(\frac{\partial}{\partial x}\right)^{(q-1)}\left(\frac{\partial}{\partial y}\right)^{(q-1)} (\alpha_{i+1,j+1} a^{i+1} b^{j+1} x^{(i+1)(q-1) +i} y^{(j+1)(q-1)+j} ) \right)^{1/q} f_y^{-1} dx \\
&= \alpha_{i+1,j+1} a^{i+1} b^{j+1} x^i y^j f_y^{-1} dx. \qedhere
\end{align*}
\end{proof}

The Cartier operator gives significant arithmetic information about smooth projective curves over $\F_q$, in particular the curve $C_\lambda$.

\begin{Definition}
The $p$-rank of $C_\lambda$, denoted $f_{C_\lambda}$, is the $\F_q$-dimension of $H^0(C_\lambda,\Omega^1_{C_\lambda})^{\textrm{ss}}$, the subspace of $H^0(C_\lambda,\Omega^1_{C_\lambda})$ where $\Cartier$ is invertible.  

More generally, for a character $\chi : G \to \mu_{q-1}$ we define $f_{C_\lambda,\chi} := \dim_{\F_q} H^0(C_\lambda,\Omega^1_{C_\lambda})^{\chi,\textrm{ss}}$ to be the dimension of the subspace of regular differentials where $\Cartier$ acts semisimply and $\Gal(C_\lambda/\PP^1_{\F_q} ) =G$ acts via $\chi$.  

We also  let $P_\lambda(T,\chi)$ denote the numerator of the Artin $L$-function $L(T,C_\lambda/\PP^1_{\F_q},\chi)$.
\end{Definition}

Note that $P_\lambda(T,\chi) = L(T,C_{\lambda}/\PP^1_{\F_q},\chi)$ unless $\chi$ is trivial, in which case $L(T,C_\lambda/\PP^1_{\F_q},1)$ has a denominator.  

\begin{Proposition} \label{prop:ruck}
Letting $q=p^r$, we have 
\begin{equation} \label{eq:ruck}
P_\lambda(T,\chi) \equiv \det( 1 - \mathscr{C}^r T | H^0(C_\lambda, \Omega^1_{C_\lambda})^{\chi}) \pmod{p}.
\end{equation}
In particular, the degree of $P_\lambda(T,\chi)$ is $f_{C_\lambda,\chi}$. 
\end{Proposition}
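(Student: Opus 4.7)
The plan is to apply R\"uck's theorem \cite{ruck} in its character-twisted form. R\"uck's result addresses Galois covers of smooth projective curves over $\F_q$ whose Galois group has order coprime to $p$, relating the mod-$p$ reduction of the Artin $L$-function to the action of $\Cartier^r$ on the appropriate isotypic component of global regular differentials on the cover. Its proof goes through a mod-$p$ version of the Lefschetz-type trace formula, expressing the local terms $\chi(\Frob_P)$ in \eqref{eq:standard} through traces of $\Cartier^r$ on twisted differentials.

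Since the Galois group of $\pi: C_\lambda \to \PP^1_{\F_q}$ is $G = \mu_{q-1}^2$ of order $(q-1)^2$, coprime to $p$ as $q = p^r$, R\"uck's theorem applies directly to $\pi$, and applied to a character $\chi$ of $G$ it yields precisely the congruence \eqref{eq:ruck}.

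For the degree claim, we use Proposition~\ref{prop:cartieraction}: the $\F_q$-linear operator $\Cartier^r$ acts diagonally on $H^0(C_\lambda, \Omega^1_{C_\lambda})$ with explicit eigenvalues. Since $\Cartier^r$ commutes with the $G$-action, it preserves the $\chi$-isotypic component and remains diagonalizable there. The degree of $\det(1 - \Cartier^r T | H^0(C_\lambda, \Omega^1_{C_\lambda})^\chi)$ therefore equals the number of non-zero eigenvalues of $\Cartier^r$ on this component, which by definition of $f_{C_\lambda,\chi}$ as the dimension of the subspace on which $\Cartier$ acts invertibly is exactly $f_{C_\lambda,\chi}$.

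The main obstacle, if one wished to derive \eqref{eq:ruck} without directly invoking R\"uck's character-twisted statement, would be to match factors in a product decomposition. One could instead combine the classical (trivial-character) R\"uck congruence $P_{C_\lambda}(T) \equiv \det(1 - \Cartier^r T | H^0(C_\lambda, \Omega^1_{C_\lambda})) \pmod{p}$ with the isotypic decomposition of both sides under $G$: the left factors as $P_{C_\lambda}(T) = \prod_\chi P_\lambda(T,\chi)$ via standard Artin $L$-function formalism, and the right factors as $\prod_\chi \det(1 - \Cartier^r T | H^0(C_\lambda, \Omega^1_{C_\lambda})^\chi)$ because $|G|$ is coprime to $p$. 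Equality of products alone does not imply equality of corresponding factors, so the matching requires an equivariant refinement of R\"uck's argument---precisely what his theorem already provides.
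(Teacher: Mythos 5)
Your proposal takes essentially the same route as the paper: the paper's proof simply invokes R\"uck's character-twisted result \cite[Theorem 4.1 and Corollary 4.1]{ruck} for the tame Galois cover $\pi$, exactly as you do, with the degree claim read off from the diagonal action of $\Cartier^r$. The only point the paper makes that you omit is the bookkeeping remark that $P_\lambda(T,\chi)$ must be viewed as having coefficients in $W(\F_q)$ via a fixed identification of $\mu_{q-1}\subset W(\F_q)$ with the complex $(q-1)$-st roots of unity, so that reduction modulo $p$ is meaningful.
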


\begin{proof}
This is a special case of a result of R\"{u}ck \cite[Theorem 4.1 and Corollary 4.1]{ruck}.  Note that $P_\lambda(T,\chi)$ can be viewed as a polynomial with coefficients in the ring of Witt vectors $W(\F_q)$ as this ring contains the $(q-1)$-st roots of unity, so reducing modulo $p$ makes sense.  
The identification of $\mu_{q-1} \subset W(\F_q)$ with the complex $(q-1)$-st
roots of unity (used to define the $L$-function) requires an arbitrary choice; we fix one throughout.
\end{proof}

\begin{Remark}
The result of R\"{u}ck generalizes a result of Manin \cite{manin} which relates the numerator of the zeta function with the action of the Cartier operator on the space of regular differentials.
\end{Remark}

\begin{Proposition} \label{prop:charcomputation}
Fix a non-trivial character $\chi_{b,c}$ and let $i \colonequals (b-1 \mod{q-1})$ and $j \colonequals (c-1 \mod{q-1})$.  
If $i + j \leq q-4$ and $\alpha_{i+1,j+1} \not \equiv 0 \pmod{p}$ then $f_{C,\chi_{b,c}} =1$ and 
\[
P_\lambda(T,\chi_{b,c}) \equiv 1 - \alpha_{i+1,j+1} \left( \frac{ \lambda}{1-\lambda} \right) ^{i+1} \left( \frac{ 1}{\lambda-1} \right)^{j+1} T \pmod{p}.
\]
Otherwise $f_{C_\lambda,\chi_{b,c}} =0$ and $P_\lambda(T,\chi_{b,c}) \equiv  1 \pmod{p}$.
\end{Proposition}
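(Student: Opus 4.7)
The plan is to combine R\"uck's congruence (Proposition~\ref{prop:ruck}) with the explicit diagonalization of the Cartier operator in Proposition~\ref{prop:cartieraction}, keeping careful track of how the Galois group $G = \mu_{q-1}^2$ acts on the basis of differentials from Lemma~\ref{lem:differentials}. The main task is to identify the $\chi_{b,c}$-isotypic component of $H^0(C_\lambda, \Omega^1_{C_\lambda})$ and then read off the Cartier eigenvalue.

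First I would put the defining equation of $C_\lambda$ into the form used by Proposition~\ref{prop:cartieraction}. Dehomogenizing by setting $z = 1$ and dividing \eqref{eq:clambda} by $1-\lambda$ yields the affine equation
\[
\frac{\lambda}{1-\lambda}\, x^{q-1} + \frac{1}{\lambda-1}\, y^{q-1} + 1 = 0,
\]
so we may apply Proposition~\ref{prop:cartieraction} with $a = \lambda/(1-\lambda)$ and $b = 1/(\lambda-1)$. Lemma~\ref{lem:differentials} then gives the basis $\omega_{i,j} = x^i y^{j-(q-2)} dx$ of $H^0(C_\lambda,\Omega^1_{C_\lambda})$ for $i,j \ge 0$, $i+j \le q-4$, and Proposition~\ref{prop:cartieraction} shows that $\mathscr{C}^r$ acts on $\omega_{i,j}$ by the scalar $\alpha_{i+1,j+1} a^{i+1} b^{j+1}$.

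Next I would determine the Galois eigencharacter of $\omega_{i,j}$. By Proposition~\ref{prop:rcf}, an element $(\zeta_1,\zeta_2) \in G$ acts by $x \mapsto \zeta_1 x$, $y \mapsto \zeta_2 y$, so
\[
(\zeta_1,\zeta_2) \cdot \omega_{i,j} = \zeta_1^{i+1}\, \zeta_2^{\,j-(q-2)}\, \omega_{i,j} = \zeta_1^{i+1}\, \zeta_2^{\,j+1}\, \omega_{i,j},
\]
where the last equality uses $\zeta_2^{q-1} = 1$. Thus $\omega_{i,j}$ lies in the $\chi_{i+1,j+1}$-isotypic component. Since $1 \le i+1, j+1 \le q-3$ as $(i,j)$ ranges over the basis, distinct basis elements give distinct characters, and the basis provides a full character decomposition of $H^0(C_\lambda,\Omega^1_{C_\lambda})$.

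Given $\chi_{b,c}$, the unique candidate basis element contributing is $\omega_{i,j}$ with $i \equiv b-1$ and $j \equiv c-1 \pmod{q-1}$, taken with $0 \le i,j \le q-2$; this $\omega_{i,j}$ belongs to the basis precisely when $i+j \le q-4$. If this inequality fails, the $\chi_{b,c}$-component of $H^0$ is zero, so $f_{C_\lambda,\chi_{b,c}} = 0$ and $P_\lambda(T,\chi_{b,c}) \equiv 1 \pmod{p}$ by Proposition~\ref{prop:ruck}. If it holds, the component is one-dimensional and $\mathscr{C}^r$ acts by $\alpha_{i+1,j+1} a^{i+1} b^{j+1}$; Proposition~\ref{prop:ruck} then gives
\[
P_\lambda(T,\chi_{b,c}) \equiv 1 - \alpha_{i+1,j+1} \left(\tfrac{\lambda}{1-\lambda}\right)^{i+1} \left(\tfrac{1}{\lambda-1}\right)^{j+1} T \pmod{p},
\]
which collapses to $1 \pmod{p}$ (giving $f_{C_\lambda,\chi_{b,c}} = 0$) exactly when $\alpha_{i+1,j+1} \equiv 0 \pmod{p}$. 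The only real subtlety is the bookkeeping of the shift $\omega_{i,j} \leftrightarrow \chi_{i+1,j+1}$ modulo $q-1$ and correctly bounding the ranges so that the basis genuinely parameterizes every non-trivial character eigenspace.
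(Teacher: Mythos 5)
Your proposal is correct and follows essentially the same route as the paper's own proof: pass to the affine patch $z=1$, compute the $G$-eigencharacter $\chi_{m+1,n+1}$ of each basis differential $\omega_{m,n}$, identify the (at most one-dimensional) $\chi_{b,c}$-isotypic component, and apply Proposition~\ref{prop:cartieraction} together with R\"uck's congruence (Proposition~\ref{prop:ruck}). Your explicit remark that the linear term collapses when $\alpha_{i+1,j+1}\equiv 0\pmod p$, forcing $f_{C_\lambda,\chi_{b,c}}=0$, is a slightly more careful treatment of that subcase than the paper gives, but the argument is the same.
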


\begin{proof}
Consider the affine patch of $C_\lambda$ obtained by taking $z=1$, given by 
\begin{equation}
\frac{\lambda}{1-\lambda} x^{q-1} + \frac{1}{\lambda-1} y^{q-1} + 1 =0.
\end{equation}
 A basis for the regular differentials on $C_\lambda$ in this coordinate system is given by
\[
\omega_{m,n} \colonequals \frac{x^m y^n }{y^{q-2}} dx
\]  
with $m,n \geq 0$ and $m + n \leq q-4$ by Lemma~\ref{lem:differentials}.
The Galois action of $G = \mu_{q-1}^2$ is acting by roots of unity on $x$ and $y$, so for $(\zeta_1,\zeta_2) \in \mu_{q-1}^2$ we compute that
\[
(\zeta_1,\zeta_2)^* \omega_{m,n} = \frac{(\zeta_1 x)^m (\zeta_2 y)^n }{(\zeta_2 y)^{q-2}} d(\zeta_1 x) = \zeta_1^{m+1} \zeta_2^{n-(q-2)} \omega_{m,n} = \zeta_1^{m+1} \zeta_2^{n+1} \omega_{m,n}.
\]
On the other hand, $\chi_{b,c}(\zeta_1,\zeta_2) = \zeta_1^b \zeta_2^c$.  Thus $\omega_{m,n}$ lies in $H^0(\Omega_{C_\lambda}^1)^{\chi_{b,c}}$ if and only if $m +1 \equiv b \pmod{q-1}$ and $n+1 \equiv c \pmod{q-1}$.
Thus we see that $H^0(\Omega_{C_\lambda}^1)^{\chi_{b,c}}$ is one dimensional if and only if $i + j \leq q-4$.  
(When this happens, $\omega_{i,j}$ spans the space.)

If $i +j > q-4$, the dimension of $H^0(\Omega_{C_\lambda}^1)^{\chi_{b,c}}$ is zero and hence $f_{C_\lambda,\chi_{b,c}} =0$ and $P_\lambda(t,\chi_{b,c}) =0$ by Proposition~\ref{prop:ruck}.  If $i + j \leq q-4$ then Proposition~\ref{prop:cartieraction} shows that
\[
\Cartier^r \omega_{i,j} = \alpha_{i+1,j+1} \left(\frac{\lambda}{1-\lambda} \right)^{i+1} \left(\frac{1}{\lambda-1}\right)^{j+1} \omega_{i,j}.
\]
By Proposition~\ref{prop:ruck}, we conclude that
\[
P_\lambda(T,\chi_{b,c}) \equiv 1 - \alpha_{i+1,j+1} \left(\frac{\lambda}{1-\lambda} \right)^{i+1} \left(\frac{1}{\lambda-1}\right)^{j+1}  T \pmod{p}. \qedhere
\]
\end{proof}

\begin{Corollary} \label{cor:congruence}
If $q \neq 3$, we have that $P_\lambda(T,\chi_{1,1}) \equiv 1 + 2  \lambda  (\lambda-1)^{-2} T \pmod{p}$.  (If $q=3$, $P_\lambda(T,\chi_{1,1}) = 1$.)
\end{Corollary}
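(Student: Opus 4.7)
The plan is to simply specialize Proposition~\ref{prop:charcomputation} to the character $\chi_{1,1}$ and compute the multinomial coefficient modulo $p$.

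First I would unpack the data: with $b=c=1$ we get $i = (1-1 \bmod q-1) = 0$ and $j = 0$, so the condition $i+j \le q-4$ becomes $0 \le q-4$, i.e.\ $q \ge 4$. Since $q$ is an odd prime power, this separates the single exceptional case $q=3$, in which Proposition~\ref{prop:charcomputation} gives $f_{C_\lambda,\chi_{1,1}}=0$, so by Proposition~\ref{prop:ruck} the reduction of $P_\lambda(T,\chi_{1,1})$ modulo $p$ is $1$; but $\deg P_\lambda(T,\chi_{1,1}) = f_{C_\lambda,\chi_{1,1}} = 0$ forces $P_\lambda(T,\chi_{1,1}) = 1$ exactly, handling the parenthetical claim.

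Next, for $q \ne 3$, I would compute the multinomial coefficient appearing in Proposition~\ref{prop:charcomputation}. From \eqref{eq:defalpha},
\[
\alpha_{1,1} = \binom{q-1}{1,\,1,\,q-3} = (q-1)(q-2) \equiv (-1)(-2) = 2 \pmod{p}.
\]
Since $p$ is odd, this is nonzero modulo $p$, so the first (nontrivial) conclusion of Proposition~\ref{prop:charcomputation} applies.

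Finally, I would substitute into the formula in Proposition~\ref{prop:charcomputation}, obtaining
\[
P_\lambda(T,\chi_{1,1}) \equiv 1 - \alpha_{1,1}\cdot \frac{\lambda}{1-\lambda}\cdot \frac{1}{\lambda-1}\, T \equiv 1 - 2\cdot \frac{\lambda}{-(\lambda-1)^2}\, T \equiv 1 + 2\lambda(\lambda-1)^{-2} T \pmod{p},
\]
which is the claimed congruence. There is no real obstacle here: the entire content is the specialization $b=c=1$, and the only thing to check is that $\alpha_{1,1}$ remains a unit modulo $p$, which holds whenever $p$ is odd.
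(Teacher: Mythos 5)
Your proposal is correct and follows essentially the same route as the paper: specialize Proposition~\ref{prop:charcomputation} to $i=j=0$ and note $\alpha_{1,1}=(q-1)(q-2)\equiv 2\pmod p$; your handling of $q=3$ via the degree statement in Proposition~\ref{prop:ruck} is a minor variant of the paper's genus-zero observation and is equally valid.
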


\begin{proof}
When $q \neq 3$, take $i = j = 0$ in Proposition~\ref{prop:charcomputation}.  In the edge case that $q=3$, by inspection $C_\lambda$ has genus zero so $P_\lambda(T,\chi_{1,1}) = 1$.
\end{proof}

For $\lambda_1$ and $\lambda_2 \in \F_q \backslash \{0,1\}$ we have identified $G$ as the Galois group of $C_{\lambda_1} \to \PP^1_{\F_q}$ and of $C_{\lambda_2} \to \PP^1_{\F_q}$. 

\begin{Corollary} \label{cor:identifyfibers}
 If $L(T,C_{\lambda_1}/\PP^1_{\F_q},\chi_{1,1}) \equiv L(T, C_{\lambda_2}/\PP^1_{\F_q},\chi_{1,1}) \pmod{p}$ then $\lambda_2 = \lambda_1$ or $\lambda_2 = 1/\lambda_1$.  
\end{Corollary}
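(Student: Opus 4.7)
The plan is to read off the statement almost directly from Corollary \ref{cor:congruence}, which already pins down the $L$-function modulo $p$ as a linear polynomial whose non-constant coefficient is an explicit rational function of $\lambda$. The only work is to identify the fibers of that rational function.

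First, I would note that since $\chi_{1,1}$ is non-trivial, we have $L(T, C_\lambda/\PP^1_{\F_q}, \chi_{1,1}) = P_\lambda(T, \chi_{1,1})$, so the congruence in the hypothesis passes to the $P_\lambda$'s. Next I would dispose of the edge case $q = 3$: the only allowed value of $\lambda$ is $2 \in \F_3$, which satisfies $1/\lambda = \lambda$ automatically, so the conclusion is vacuous. Then I would assume $q \neq 3$ and invoke Corollary \ref{cor:congruence} to obtain
\[
1 + 2\lambda_1(\lambda_1-1)^{-2} T \equiv 1 + 2\lambda_2(\lambda_2-1)^{-2} T \pmod{p}.
\]

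Since $q$ is odd, the factor of $2$ is invertible in $\F_q$, and both sides of the resulting equality lie in $\F_q$, so the congruence modulo $p$ is actually an equation in $\F_q$: $\lambda_1(\lambda_2-1)^2 = \lambda_2(\lambda_1-1)^2$. Expanding and cancelling the common $-2\lambda_1\lambda_2$ from each side gives $\lambda_1\lambda_2^2 + \lambda_1 = \lambda_1^2\lambda_2 + \lambda_2$, which I would rearrange as
\[
(\lambda_2 - \lambda_1)(\lambda_1\lambda_2 - 1) = 0.
\]
Hence $\lambda_2 = \lambda_1$ or $\lambda_1\lambda_2 = 1$, i.e.\ $\lambda_2 = 1/\lambda_1$, as required.

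There is no substantive obstacle here: the entire content is packaged inside Corollary \ref{cor:congruence}, and the remaining step is elementary algebra in $\F_q$. The only thing to watch for is the degenerate $q=3$ case (where the space of regular differentials vanishes and $P_\lambda(T,\chi_{1,1})=1$), which is handled trivially because $\F_3 \setminus \{0,1\}$ has only one element.
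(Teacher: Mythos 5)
Your proof is correct and takes essentially the same route as the paper: both reduce to Corollary~\ref{cor:congruence} and then observe that the map $\lambda \mapsto 2\lambda(\lambda-1)^{-2}$ identifies exactly $\lambda$ and $1/\lambda$ (the paper phrases this as ``$f(\lambda)=f(1/\lambda)$ and $f(\lambda)=c$ has at most two solutions,'' while you verify it by the explicit factorization $(\lambda_2-\lambda_1)(\lambda_1\lambda_2-1)=0$). Your extra care with the $q=3$ edge case and with why the congruence mod $p$ is an equality in $\F_q$ is harmless and fine.
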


\begin{proof}
Let $f(\lambda) = 2 \lambda (\lambda-1)^{-2}$.  As $f(\lambda) = f(1/\lambda)$ and $f(\lambda) = c$ has at most two solutions for fixed $c \in \F_q$, this follows from Corollary~\ref{cor:congruence}.
\end{proof}

We can remove the ambiguity using a second $L$-function.

\begin{Corollary}  \label{cor:identifyfibers2}
Suppose $p \geq 5$.  
If $L(T,C_{\lambda_1}/\PP^1_{\F_q},\chi_{1,1}) \equiv L(T, C_{\lambda_2}/\PP^1_{\F_q},\chi_{1,1}) \pmod{p}$ and $L(T,C_{\lambda_1}/\PP^1_{\F_q},\chi_{1,2}) \equiv L(T, C_{\lambda_2}/\PP^1_{\F_q},\chi_{1,2}) \pmod{p}$, then $\lambda_1 = \lambda_2$.
\end{Corollary}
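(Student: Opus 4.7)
The plan is to combine Corollary \ref{cor:identifyfibers} with a second application of Proposition \ref{prop:charcomputation} for the character $\chi_{1,2}$, using the latter to break the $\lambda \leftrightarrow 1/\lambda$ ambiguity produced by the former.

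First, I would apply Corollary \ref{cor:identifyfibers} to the hypothesis on $\chi_{1,1}$ and reduce to showing that, under the congruence on $\chi_{1,2}$, the case $\lambda_2 = 1/\lambda_1$ forces $\lambda_1 = \lambda_2$. Next I would compute $P_\lambda(T,\chi_{1,2}) \pmod{p}$ using Proposition \ref{prop:charcomputation}. With $(b,c) = (1,2)$ the reductions are $i = 0$ and $j = 1$, so $i+j = 1 \le q-4$ since $q = p^r \ge 5$. The multinomial coefficient is
\[
\alpha_{1,2} = \frac{(q-1)(q-2)(q-3)}{2} \equiv \frac{(-1)(-2)(-3)}{2} = -3 \pmod{p},
\]
which is nonzero because $p \ge 5$. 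Substituting and simplifying using $1-\lambda = -(\lambda-1)$ gives
\[
P_\lambda(T,\chi_{1,2}) \equiv 1 + g(\lambda)\, T \pmod{p}, \qquad g(\lambda) \colonequals \frac{-3\lambda}{(\lambda-1)^3}.
\]

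Finally, the hypothesis forces $g(\lambda_1) = g(\lambda_2)$. If $\lambda_2 = \lambda_1$ we are done, so suppose $\lambda_2 = 1/\lambda_1$. A direct substitution yields
\[
g(1/\lambda_1) = \frac{-3/\lambda_1}{(1/\lambda_1 - 1)^3} = \frac{3\lambda_1^2}{(\lambda_1-1)^3},
\]
so $g(\lambda_1) = g(1/\lambda_1)$ becomes $-\lambda_1 = \lambda_1^2$, i.e.\ $\lambda_1(\lambda_1+1)=0$. Since $\lambda_1 \ne 0$, this forces $\lambda_1 = -1$, in which case $1/\lambda_1 = -1 = \lambda_1$ as well, so $\lambda_2 = \lambda_1$ in every case.

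There is no serious obstacle: the argument is a routine consequence of Proposition \ref{prop:charcomputation} once one selects a second character whose mod-$p$ linear coefficient is a nontrivially different function of $\lambda$. The only subtlety is verifying that $\chi_{1,2}$ actually yields a nonconstant $L$-polynomial modulo $p$, which is where the hypothesis $p \ge 5$ enters (both to ensure $q \ge 5$ and to ensure $\alpha_{1,2} \equiv -3 \not\equiv 0 \pmod p$); the case $p=3$ would require a different choice of auxiliary character.
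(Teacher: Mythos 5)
Your proposal is correct and follows essentially the same route as the paper: both hinge on applying Proposition~\ref{prop:charcomputation} to $\chi_{1,2}$ (with $i=0$, $j=1$, $\alpha_{1,2}\equiv -3 \pmod p$) to extract the quantity $\lambda(\lambda-1)^{-3}$ as a second invariant alongside $\lambda(\lambda-1)^{-2}$. The only (cosmetic) difference is the final algebra: the paper divides the two congruences to get $\lambda_1-1=\lambda_2-1$ directly, whereas you first invoke Corollary~\ref{cor:identifyfibers} to reduce to $\lambda_2=1/\lambda_1$ and then show $g(\lambda)=g(1/\lambda)$ forces $\lambda=-1$; both are valid.
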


\begin{proof}
For $b =1, c=2$ we have $i=0$ and $j=1$, so $i+j=1 \leq q-4$ by hypothesis.   
Using Proposition~\ref{prop:charcomputation} we see that
\begin{align*}
P_\lambda(T,\chi_{1,2}) &\equiv 1 - \alpha_{1,2} \left( \frac{\lambda}{1-\lambda} \right) \left( \frac{1}{\lambda-1} \right)^2 T \pmod{p} \\
& \equiv 1 - 3 \lambda (\lambda-1)^3 T \pmod{p}
\end{align*}
as $i = q-4$ and $j=0$ and the exponents only matter modulo $q-1$.  The linear terms of the two pairs of $L$-functions being congruent modulo $p$ implies that  
\begin{equation*}
\frac{\lambda_1}{(\lambda_1-1)^2} \equiv \frac{\lambda_2}{(\lambda_2-1)^2} \pmod{p} \quad \text{and} \quad
\frac{ \lambda_1}{(\lambda_1-1)^3} \equiv  \frac{ \lambda_2}{(\lambda_2-1)^3} \pmod{p}.
\end{equation*}  
Hence we conclude that $\lambda_1 = \lambda_2$.
\end{proof}

\begin{remark}
These corollaries use the non-canonical identification of $G$ with the Galois group of the cover $C_{\lambda_i} \to \PP^1_{\F_q}$ to specify the characters.  The identification is natural given the equation \eqref{eq:clambda} (and independent of $\lambda$), but this is not intrinsic to the curve.  To distinguish $C_{\lambda_1}$ and $C_{\lambda_2}$ as covers of $\PP^1$ without this non-canonical identification, we must intrinsically identify the character $\chi_{1,1}$.  We will address this in the next subsection.
\end{remark}

\subsection{Distinguishing Subsets via \texorpdfstring{$L$}{L}-Functions} \label{ss:distinguishing}

Let $U$ be an open subset of $\PP^1_{\F_q}$ which is the complement of a finite set of $\F_q$-points $P_1, \ldots, P_n$.  If $n \leq 3$, then all such $U$ with $n$ points removed are isomorphic as $\PGL_2(\F_q)$ acts three-transitively on $\PP^1_{\F_q}$.  If $n = 4$, we wish to determine $U$ up to isomorphism (equivalently, the $4$ points up to automorphism of $\PP^1_{\F_q}$) using $L$-functions of characters of covers of $U$ (covers of $\PP^1_{\F_q}$ unramified away from $P_1, \ldots, P_4$).  We will do so using characters of the ray class field with modulus $[P_2] + [P_3] + [P_4]$ in which $P_1$ splits completely.  The key technical obstacle is finding a way to specify the character to use.

\begin{Proposition} \label{prop:distinguishedchar}
Continuing the notation of Section \ref{ss:lfunctions}, if $q \neq 9$ and the character $\chi: \Gal(C_\lambda/\PP^1_{\F_q}) = G \to \mu_{q-1}$ satisfies
\begin{enumerate}

\item \label{cond1} $\chi$ is surjective;

\item  the genus of $C_\lambda / \ker \chi$ is $(q-3)/2$;

\item  the cover $C_\lambda \to C_\lambda/ \ker \chi$ is ramified only at the $q-1$ points of $C_\lambda$ lying over $0$;

\item  \label{cond4} $f_{C_\lambda,\chi^n} = 1$ if and only if $f_{C_\lambda,\chi_{n,n}} =1$ for all $n$ such that $0< 2n < q-1$;  
\end{enumerate}
then $\chi$ is $\chi_{1,1}, \chi_{p,p}, \ldots,$ or $\chi_{p^{r-1},p^{r-1}}$.  If $q=9$, $\chi$ could additionally be $\chi_{1,3}$ or $\chi_{3,1}$.
\end{Proposition}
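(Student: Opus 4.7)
\emph{Step 1 (translate conditions (1) and (3)).} Any character of $G = \mu_{q-1}^2$ has the form $\chi = \chi_{b,c}$ for $(b,c) \in (\Z/(q-1))^2$, and condition (1) amounts to $\gcd(b,c,q-1) = 1$. By Lemma~\ref{lem:inertia}, the stabilizers in $G$ of the points of $C_\lambda$ above $1$, $\lambda$, and $0$ are $G_1$, $G_\lambda$, $G_0$ respectively. The ramification index of the cover $C_\lambda \to C_\lambda/\ker\chi$ at a point above $x \in \{1,\lambda,0\}$ equals $|G_x \cap \ker\chi|$, which computes to $\gcd(b,q-1)$, $\gcd(c,q-1)$, and $e := \gcd(b+c,q-1)$ respectively. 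Condition (3) is thus equivalent to $\gcd(b,q-1) = \gcd(c,q-1) = 1$ together with $e > 1$; in particular $b$ and $c$ are both odd.

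\emph{Step 2 (translate condition (2)).} The cover $C_\lambda \to C_\lambda/\ker\chi$ has degree $q-1$, with tame ramification $(q-1)(e-1)$ concentrated on the fiber over $0$. Since $2g_{C_\lambda}-2 = (q-1)(q-4)$, Riemann-Hurwitz gives $g_{C_\lambda/\ker\chi} = (q-1-e)/2$. Condition (2) then forces $e = 2$, so $\gcd(b+c,q-1) = 2$.

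\emph{Step 3 (translate condition (4)).} Proposition~\ref{prop:charcomputation} combined with Kummer's theorem on multinomial coefficients modulo $p$ identifies $f_{C_\lambda,\chi_{b',c'}} = 1$ with a digit condition: writing $b',c'$ as representatives in $\{1,\ldots,q-1\}$, the condition is $b' + c' \leq q-2$ together with $b'_i + c'_i \leq p-1$ at every base-$p$ digit position $i$. For $(b',c') = (n,n)$ with $0 < 2n < q-1$ this simplifies to: every base-$p$ digit of $n$ is at most $(p-1)/2$. Condition (4) then becomes the combinatorial equivalence, for every $n$ in the range, between this diagonal digit condition on $n$ and the corresponding (non-diagonal) digit condition on the pair $(nb \bmod q-1,\, nc \bmod q-1)$.

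\emph{Step 4 (the main combinatorial conclusion).} The heart of the proof is to show that this equivalence, together with $\gcd(b+c,q-1) = 2$, forces $b \equiv c \equiv p^i \pmod{q-1}$ for some $i \in \{0,\ldots,r-1\}$. The easy direction uses that multiplication by $p$ on $\Z/(p^r-1)$ cyclically shifts base-$p$ digits, so multiplication by any $p^i$ preserves the digit-bounded set and hence satisfies the equivalence. For the converse one specializes $n$: the case $n = 1$ forces $b + c \leq q-2$ and digit-disjointness of $b$ and $c$ in base $p$; evaluating condition (4) at $n = p^j$ and at $n$ with various one-hot or half-digit patterns extracts enough constraints on each base-$p$ digit position of $b$ and $c$ that, together with the $\gcd$ condition, only the cyclic shifts of $(b,c) = (1,1)$ survive. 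The main obstacle is carrying out this systematic digit analysis cleanly; the case $q = 9$ is exceptional because the test range $\{1,2,3\}$ provides too few $n$ to distinguish the mixed characters $\chi_{1,3}$ and $\chi_{3,1}$ from the diagonal $p$-power characters via condition (4) alone, which accounts for the stated exception and necessitates a separate treatment in that case.
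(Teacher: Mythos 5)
Your Steps 1--3 reproduce the paper's reduction essentially verbatim and are correct: the inertia computation via Lemma~\ref{lem:inertia} giving $\gcd(b,q-1)=\gcd(c,q-1)=1$ and $e=\gcd(b+c,q-1)$, the Riemann--Hurwitz computation forcing $e=2$ from condition (2), and the translation of condition (4) into the equivalence of $C(n)$ and $C(nb,nc)$ via Proposition~\ref{prop:charcomputation} and Kummer's theorem are exactly what the paper does in its proof of Proposition~\ref{prop:distinguishedchar}.

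The gap is Step 4, which is the actual content of the result; the paper isolates it as Proposition~\ref{prop:charsdiffer} and devotes all of Section~\ref{ss:technicalproof} to it. You correctly identify what must be shown but do not show it: you propose extracting digit-by-digit constraints on $b$ and $c$ by specializing $n$ to $1$, to $p^j$, and to ``one-hot or half-digit patterns,'' and then concede that ``the main obstacle is carrying out this systematic digit analysis cleanly.'' That obstacle is genuine and not mere bookkeeping: multiplication by $b$ followed by reduction modulo $q-1$ does not act digit-wise in base $p$ (carries propagate across all positions), so the truth of $C(nb,nc)$ for a few structured $n$ does not localize to constraints on individual digits of $b$ and $c$. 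The paper's mechanism is different: writing $k=(q-1)/2$, the involution $n\mapsto k-n$ preserves the set of $n$ satisfying $C(n)$, and the congruence $(nb \bmod 2k)+((k-n)b \bmod 2k)\equiv k \pmod{2k}$ forces $A(b,n)+A(c,n)=2k$ and hence $(nb \bmod 2k)<k$ whenever $C(n)$ holds; Lemma~\ref{lem:witness} then produces, for any $b$ not congruent to a power of $p$, a witness $n$ violating this bound, and Lemma~\ref{lem:equalpowers} excludes $b=p^{m_1}\neq p^{m_2}=c$, failing exactly at $q=9$, which is the true source of the exceptional case. Without an argument of comparable substance in Step 4, the proposition remains unproved.
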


We can reinterpret (\ref{cond4}) using Proposition~\ref{prop:charcomputation}.
For an integer $n$, the condition $f_{C_\lambda,\chi_{n,n}} =1$ is equivalent to the condition $C(n)$ that
 \begin{equation} \label{eq:condn}
C(n) : \quad  0<2n < q-1 \quad \text{and} \quad (n \mod{p^i}) < p^i/2 \text{ for each } i \in \{1,\ldots,r-1\}.
\end{equation}
(Note that the first piece of $C(n)$ is almost equivalent to $(n \mod{p^r}) < p^r/2$: the only difference is that $n=(p^r-1)/2$ and $n=0$ are excluded.)   
For integers $b$ and $c$, write $b' = (b \mod{q-1})$ and $c' = (c \mod{q-1})$ and let $C(b,c)$ be the condition that
\begin{equation} \label{eq:condbc}
C(b,c) : \quad b' + c' < q-1 \quad \text{ and } \quad \alpha_{b,c} = \frac{(q-1) (q-2) \cdots (q-b'-c')}{b'! \cdot c'!} \not \equiv 0 \pmod{p}.
\end{equation}
Proposition~\ref{prop:charcomputation} shows that $C(nb,nc)$ holds if and only if $f_{C_\lambda, \chi_{b,c}^n}=1$. 

\begin{remark} \label{remark:kummer}
The conditions $C(n)$ and $C(b,c)$ may initially appear unconnected.  Note that $\alpha_{b,c} \equiv \pm \binom{b'+c'}{b'} \mod{p}$.  When $b' + c' < q = p^r$, a theorem of Kummer \cite{kummer} implies this is non-zero modulo $p$ precisely if $(b' \mod p^i) + (c' \mod p^i) < p^i$ for all $1 \leq i \leq r-1$.  When $0 < 2n < q-1$, we therefore directly see that $C(n)$ is equivalent to $C(n,n)$. 
\end{remark}

We will prove Proposition~\ref{prop:distinguishedchar} relying on the following result: 

\begin{Proposition} \label{prop:charsdiffer}
Let $b$ and $c$ be integers that are relatively prime to $q-1$ satisfying $0<b,c < q-1$.  For $0 < n < 2q$, suppose that $C(n)$ holds if and only if $C(nb,nc)$ holds.  Then $b = c = p^i $ for some $i \in \{0,1,\ldots, r-1\}$, or $q=9$ and $(b,c) = (3,1)$ or $(1,3)$.
\end{Proposition}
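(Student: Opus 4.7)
The plan is to translate everything to a digit-level statement in base $p$ and then feed in carefully chosen values of $m$. First, observe that both $C(n)$ and $C(nb, nc)$ depend only on the residue class of $n$ modulo $q-1$ (with the natural convention that the condition is false when $nb \equiv nc \equiv 0$, since the trivial character is excluded). Setting $V \subseteq \Z/(q-1)$ to be the set of residues whose base-$p$ digits are all at most $(p-1)/2$, we have by Remark~\ref{remark:kummer} that $C(m)$ holds exactly for $m \in V \setminus \{0, (q-1)/2\}$, while $C(mb, mc)$ holds exactly when, writing $x = mb \bmod (q-1)$ and $y = mc \bmod (q-1)$, we have $d_i(x) + d_i(y) \leq p-1$ for all digit positions $i$ with strict inequality for some $i$.

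Then I extract digit constraints by plugging in small $m$. Taking $m = 1$ forces $C(b,c)$ to hold, so the base-$p$ digits satisfy $b_i + c_i \leq p-1$ and $b + c < q-1$. Taking $m = 1 + p^j$ for $j = 1, \ldots, r-1$ (when these lie in $S$, which is the generic case outside $q = 9$) and using that multiplication by $p^j$ cyclically shifts digits, strengthens this to $(b_i + c_i) + (b_{i-j} + c_{i-j}) \leq p-1$ for each $i$, giving pairwise upper bounds on the digit sums $s_i := b_i + c_i$. Iterating with $m$ of the form $\sum_{j \in J} p^j$ for various subsets $J$ (whenever this lies in $V \setminus \{0, (q-1)/2\}$) further restricts the joint digit profile of $b$ and $c$.

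The key reduction is to show that either $b = c$ or $q = 9$ with $(b,c)$ an antidiagonal pair. One does this by locating an $m \in S$ whose images $\sigma_b(m)$ and $\sigma_c(m)$ are digit-wise asymmetric enough that the no-carry-plus-sum-less-than-$q-1$ condition forces some $b_i \neq c_i$ to become inconsistent; the proof must then verify that, in precisely the exceptional case $q = 9$, the set $S = \{1, 3\}$ is too small to detect the swap. Once $b = c$ is established, the hypothesis reduces to $\sigma_b(V) = V$ (since $\sigma_b$ fixes $0$ and $(q-1)/2$ automatically using that $b$ is odd). To conclude $b = p^i$, one notes that $\sigma_b(1) = b \in V$, and tests the element $(q-1)/(p-1) = 1 + p + \cdots + p^{r-1} \in V$ (for $p \geq 5$): its image $\sigma_b((q-1)/(p-1))$ has constant digits equal to $\sum b_i \bmod (p-1)$, forcing $\sum b_i = 1$ after combining with the $(p-1)/2$ bound on each digit. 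For $p = 3$ one argues directly since the only units of $\Z/(q-1)$ lying in $V$ are the powers of $p$ themselves. The main obstacle I anticipate is the asymmetric case $b \neq c$: the hypothesis is symmetric in $b$ and $c$, so one must combinatorially pinpoint the asymmetry, and track carefully why the argument degenerates exactly at $q = 9$.
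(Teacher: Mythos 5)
Your digit-level reformulation of $C(n)$ and $C(nb,nc)$ is correct (it is the content of Remark~\ref{remark:kummer} and Lemma~\ref{lem:condn}), and the overall shape --- reduce to a multiplication map preserving the set of residues with small digits, then force $b$ to be a power of $p$ --- is the right one. But the two steps carrying all of the difficulty are not actually carried out, and one auxiliary claim is false. The reduction to $b=c$ is left at ``locate an $m$ whose images are digit-wise asymmetric enough,'' with no mechanism supplied; note the paper does this in the opposite order. It first proves that $b$ and $c$ each lie in $\{1,p,\dots,p^{r-1}\}$, using that for $k=(q-1)/2$ and $b$ odd one has $(nb \bmod (q-1)) + ((k-n)b \bmod (q-1)) \equiv k \pmod{q-1}$, which together with the symmetry $C(n)\Leftrightarrow C(k-n)$ and the hypothesis forces $(nb \bmod (q-1)) < k$ whenever $C(n)$ holds; only afterwards does it separate $b$ from $c$, by the single witness $n=(p+1)/2$ (Lemma~\ref{lem:equalpowers}), which also isolates $q=9$. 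Trying to prove $b=c$ before knowing both are powers of $p$ is substantially harder, and nothing in your sketch does it.

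The second gap is the conclusion $b=p^i$ from $\sigma_b(V)=V$. Your test element $(q-1)/(p-1)$ yields only $\sum_i b_i \equiv t \pmod{p-1}$ with $t\le (p-1)/2$; this does not force $\sum_i b_i = 1$. For instance with $q=343$ the element $b=73$ has digits $(3,3,1)$, all at most $(p-1)/2$, digit sum $7\equiv 1\pmod 6$, and $\sigma_{73}$ fixes $(q-1)/(p-1)=57$, so your test does not exclude it (one needs a different witness, e.g.\ $n=2$). Producing, for \emph{every} unit $b$ that is not a power of $p$, some $n$ with $C(n)$ true and $(nb\bmod(q-1))\ge (q-1)/2$ is exactly the paper's Lemma~\ref{lem:witness}, a multi-case induction on $r$; it cannot be replaced by one or two fixed test elements. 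Your shortcut for $p=3$ is moreover false as stated: for $q=81$ the element $13=1+3+9$ lies in $V$ and is a unit modulo $80$ but is not a power of $3$. Finally, a smaller point: the inference from $m=1+p^j$ that $(b_i+c_i)+(b_{i-j}+c_{i-j})\le p-1$ presumes there are no carries in computing $(1+p^j)b$ and $(1+p^j)c$ modulo $q-1$; that is automatic once each digit of $b$ and $c$ is at most $(p-1)/2$, but the $m=1$ step only gives $b_i+c_i\le p-1$, which does not bound the individual digits.
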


We defer the proof of Proposition~\ref{prop:charsdiffer} until Section~\ref{ss:technicalproof}.  The proof is elementary but complicated.

\begin{proof}[Proof of Proposition~\ref{prop:distinguishedchar}]
Suppose $\chi$ is a character of $G$ satisfying (\ref{cond1})-(\ref{cond4}). 
As $\chi$ is surjective, the degree of the quotient map $C_\lambda \to C_\lambda /\ker \chi$ is $q-1$.  As this map is Galois, the ramification index of all of the points of $C_\lambda$ above zero are the same; denote their common value by $e$.  Then the Riemann Hurwitz formula implies that
\[
(q-2)(q-3) -2 = (q-1) (q-3 -2) + (q-1)(e-1).
\]
Thus $e=2$.

Write $\chi = \chi_{b,c}$ and $H_{b,c} = \ker \chi_{b,c}$.  By considering the intersection of the inertia groups given in Lemma~\ref{lem:inertia} with $H_{b,c}$, we see that the ramification indices of $C_\lambda \to C_\lambda/H_{b,c}$ at the points of $C_\lambda$ above $0$ (resp. $1$, $\lambda$) are $\gcd(b+c,q-1)$ (resp. $\gcd(b,q-1)$, $\gcd(c,q-1)$).  Given that $C_\lambda \to C_\lambda /\ker \chi$  is ramified only at the points over $0$ with ramification index $2$, we conclude that
\[
\gcd(b+c,q-1)=2, \quad \gcd(b,q-1)=1, \quad \text{and } \gcd(c,q-1)=1.
\]
Applying Proposition~\ref{prop:charsdiffer} and the reformulation of (\ref{cond4}) in terms of $C(n)$ and $C(n,n)$, we see that $\chi = \chi_{b,c} = \chi_{1,1}^{p^i}$ for $i \in \{0,1,\ldots,r-1\}$ unless we are in the exceptional case when $q=9$.
\end{proof}

Proposition~\ref{prop:distinguishedchar} lets us compare $L$-functions without having to identify the Galois groups.  

\begin{Theorem} \label{thm:p1distinguished}
Suppose that $q = p^r$ is odd and not equal to $9$, and let $U$ and $U'$ be $\PP^1_{\F_q}$ with four removed rational points.  Fix orderings of the removed points ($P_1,\ldots,P_4$ and $P'_1,\ldots,P_4'$).  Let $C$ be the ray class field for $\PP^1_{\F_q}$ with modulus $\fm = [P_2] + [P_3] +[P_4]$ such that $P_1$ splits completely.  Fix a character $\chi : \Gal(C / \PP^1_{\F_q}) \to \mu_{q-1}$ such that
\begin{enumerate}
\item  $\chi$ is surjective;
\item  the genus of the quotient $C/\ker \chi$ is $(q-3)/2$;
\item  the cover $C \to C/\ker \chi$ is ramified only at the $q-1$ points of $C$ lying over $P_2$;
\item  $f_{C,\chi^n}=1$ if and only if $C(n)$ holds for all integers $0 < 2n < q-1$.
\end{enumerate}
Such a character $\chi$ exists, and is ``unique up to Frobenius'': the characters $\chi,\chi^p, \ldots \chi^{p^{r-1}}$ are the only characters of $\Gal(C/\PP^1_{\F_q})$ with these properties.  

Let $C'$ and $\chi'$ be defined analogously for $U'$.  If the derivatives $L'(0,\PP^1_{\F_q},\chi)$ and $L'(0,\PP^1_{\F_q},\chi')$
lie in the same Frobenius orbit of $\PP^1_{\F_q}$, in other words
there exists an integer $0\leq i <r$ such that 
\[
(L'(0, \PP^1_{\F_q},\chi) \mod{p}) = (L'(0,\PP^1_{\F_q},\chi') \mod{p}) ^{p^i},
\]
then there is an automorphism $\alpha$ of $\PP^1_{\F_q}$ such that $\alpha(U) = U'$, $\alpha(P_1)=P'_1$, and $\alpha(P_2)=P'_2$. 
\end{Theorem}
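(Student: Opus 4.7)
My approach is to reduce to the explicit model, read off $\lambda$ from the linear coefficient of $L(T,\chi)$ mod $p$, and then build $\alpha$ by hand.

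First, using three-transitivity of $\PGL_2(\F_q)$ on $\PP^1(\F_q)$, I apply $\F_q$-automorphisms to assume $(P_1,P_2,P_3) = (P_1',P_2',P_3') = (\infty,0,1)$, so $U = \PP^1_{\F_q} \setminus \{\infty,0,1,\lambda\}$ and $U' = \PP^1_{\F_q} \setminus \{\infty,0,1,\lambda'\}$ for some $\lambda,\lambda' \in \F_q \setminus \{0,1\}$. By Proposition~\ref{prop:rcf} the ray class field covers $C$ and $C'$ are precisely $C_\lambda$ and $C_{\lambda'}$, with Galois groups identified with $G = \mu_{q-1}^2$.

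Second, I verify that $\chi_{1,1}$ satisfies conditions (1)--(4). Surjectivity is immediate. By Lemma~\ref{lem:inertia}, the restrictions $\chi_{1,1}|_{G_0}$, $\chi_{1,1}|_{G_1}$, $\chi_{1,1}|_{G_\lambda}$ have kernels of orders $2,1,1$, so the cover $C_\lambda \to C_\lambda/\ker\chi_{1,1}$ ramifies only above $P_2=0$ with index $2$, and Riemann--Hurwitz yields quotient genus $(q-3)/2$, giving (2) and (3). Condition (4) follows from Proposition~\ref{prop:charcomputation} applied to $\chi_{n,n}$ together with the equivalence $C(n) \Leftrightarrow C(n,n)$ of Remark~\ref{remark:kummer}. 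Uniqueness of $\chi$ up to Frobenius powers, for $q \neq 9$, is then exactly Proposition~\ref{prop:distinguishedchar}, the deepest input.

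Third, a brief Lucas-theorem computation gives $\alpha_{p^j,p^j} \equiv 2 \pmod{p}$ for every $0 \le j < r$, so Proposition~\ref{prop:charcomputation} (with $b = c = p^j$) yields
\[
L'(0,\PP^1_{\F_q},\chi_{1,1}^{p^j}) \equiv \bigl(2\lambda/(\lambda-1)^2\bigr)^{p^j} \pmod{p},
\]
specializing to Corollary~\ref{cor:congruence} at $j=0$. Hence, independently of the Frobenius representatives chosen for $\chi$ and $\chi'$, the hypothesis is equivalent to $f(\lambda) = f(\lambda')^{p^i}$ in $\F_q$ for some $0 \le i < r$, where $f(x) = 2x/(x-1)^2 \in \F_p(x)$. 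Applying the $(r-i)$-th power of the absolute Frobenius to both sides gives $f(\lambda^{p^{r-i}}) = f(\lambda')$, and the quadratic analysis behind Corollary~\ref{cor:identifyfibers} (the two roots of $f(x) = c$ multiply to $1$) forces either $\lambda^{p^{r-i}} = \lambda'$ or $\lambda^{p^{r-i}} \lambda' = 1$.

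Fourth, I construct $\alpha$. If $\lambda^{p^{r-i}} = \lambda'$, take $\alpha = \Frob^{r-i}$; this fixes $\infty,0,1$ and sends $\lambda$ to $\lambda'$, so $\alpha(U) = U'$ with $\alpha(P_1) = P_1'$ and $\alpha(P_2) = P_2'$. If instead $\lambda^{p^{r-i}} \lambda' = 1$, take $\alpha = (x \mapsto \lambda' x) \circ \Frob^{r-i}$, which still fixes $\infty$ and $0$ and now sends $\{1,\lambda\}$ to $\{\lambda',1\}$. The degenerate case $q=3$ is handled separately, since $C_\lambda$ has genus $0$ and all derivatives $L'(0)$ vanish, but $\PP^1(\F_3)$ contains exactly four points and $\PGL_2(\F_3) \cong S_4$ acts as the full symmetric group on them, so the required $\alpha$ always exists. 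The main obstacle in the whole argument is the uniqueness statement Proposition~\ref{prop:distinguishedchar}, which rests on the deferred combinatorial Proposition~\ref{prop:charsdiffer}; the remaining steps assemble prior results.
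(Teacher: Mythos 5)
Your proposal is correct and follows essentially the same route as the paper's proof: normalize with $\PGL_2(\F_q)$ to the model $C_\lambda$, invoke Proposition~\ref{prop:distinguishedchar} to pin down $\chi$ up to Frobenius, read off $2(\lambda/(\lambda-1)^2)^{p^j}$ from Proposition~\ref{prop:charcomputation}, and use the two-to-one nature of $\lambda\mapsto\lambda/(\lambda-1)^2$ to build $\alpha$ as a power of Frobenius possibly composed with a scaling. Your explicit verification that $\chi_{1,1}$ satisfies (1)--(4) and your separate treatment of $q=3$ are small additions the paper leaves implicit, but they do not change the argument.
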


\begin{Remark}
Informally, this says there exists a character $\chi : \Gal(C/ \PP^1_{\F_q}) \to \mu_{q-1}$ such that $L(T,C/\PP^1_{\F_q},\chi)$ determines $U$ up to isomorphism.  The choice of character is not intrinsic to $U$ as it depends on distinguishing $P_1$ and $P_2$, and is furthermore only unique up to Frobenius.  But choosing $P_1$ and $P_2$ is sufficient to determine $U$ up to isomorphism as the complement of four points in $\PP^1_{\F_q}$ with two of them marked.  To remove the restriction on marking two points, one can consider the analogous question after permuting the ordering.
\end{Remark}

\begin{proof}
Let $\sigma$ be the fractional linear transformation sending $P_1$ to $\infty$, $P_2$ to $0$, and $P_3$ to $1$.  Let $\lambda = \sigma(P_4)$.  Then $\sigma$ induces an isomorphism between $C$ and $C_\lambda$ as covers of $\PP^1$ as both are ray class fields, and in particular identifies their Galois groups.  
Proposition~\ref{prop:distinguishedchar} gives the existence of a character $\chi$ with the required properties and shows it is unique up to $p$th powers as claimed.  In particular, there is a $0 \leq j < r$ such that
\[
L(T,\PP^1_{\F_q},\chi) = L(T,C_\lambda/\PP^1_{\F_q},\chi_{p^j,p^j}).
\]
By Proposition~\ref{prop:charcomputation}, we conclude that
\[
(L'(0,\PP^1_{\F_q},\chi) \mod{p}) = 2 \left( \frac{\lambda}{(\lambda-1)^2}   \right)^{p^j} .
\]
We can do the same for $U'$, obtaining analogous $\sigma'$, $\lambda'$, and $j'$.  

By hypothesis $(L'(0, \PP^1_{\F_q},\chi) \mod{p}) =( L'(0,\PP^1_{\F_q},\chi') \mod{p})^{p^i}$, so there is an integer $0\leq i' < r$ such that
\[
 \left( \frac{\lambda}{(\lambda-1)^2}   \right)^{p^{i'}} =   \frac{\lambda'}{(\lambda'-1)^2}   .
\]
As the function $\lambda \mapsto \lambda (\lambda-1)^{-2}$ is two-to-one and the $p$th power map is an automorphism of $\F_q$, we conclude that $\lambda' = \lambda^{p^{i'}}$ or $1/\lambda' = \lambda^{p^{i'}}$.  
Thus there is an automorphism $\beta$ of $\PP^1_{\F_q}$ fixing $0$ and $\infty$ and sending $\{1,\lambda\}$ to $\{1,\lambda'\}$;  $\beta$ is either the $p^{i'}$-th power map or the $p^{i'}$-th power map composed with the automorphism $x \mapsto x/\lambda$.
Then take $\alpha \colonequals (\sigma')^{-1} \circ \beta \circ \sigma$.
\end{proof}

\begin{remark}
The automorphism $x \mapsto x/\lambda$ sends $\{1,\lambda\}$ to $\{1,1/\lambda\}$, and corresponds to switching the role of the third and fourth points.  
\end{remark}

\begin{remark} \label{remark:q9}
In the special case that $q=9$, we can distinguish $U = \PP^1_{\F_q} - \{0,1,\infty,\lambda\}$ using $L$-functions as follows.  Permuting the four marked points replaces $\lambda$ by $\lambda$, $1/\lambda$, $1-\lambda$, $1/(1-\lambda)$, $(\lambda-1)/\lambda$, and $\lambda/(\lambda-1)$.  Thus there are two choices of $U$ up to isomorphism, which occur when $\lambda=-1$ or when $\lambda \in \F_9 - \F_3$.  In light of Proposition~\ref{prop:charcomputation}, when $\lambda =-1$ we see that the value  $(L'(0,C_\lambda/\PP^1_{\F_q},\chi) \mod{p})$ lies in $\F_3$ for any character $\chi$.  Otherwise there is some character $\chi$ for which $(L'(0,C_\lambda/\PP^1_{\F_q},\chi) \mod{p})$ does not lie in $\F_3$.  Therefore we can distinguish these configurations of points using $L$-functions.  
\end{remark}

We finally prove a strong version of Conjecture~\ref{conj:lfun} for the projective line with four marked points.  As that conjecture deals with pointed curves, the modulus should be supported on three points.

\begin{Theorem} \label{thm:p1zilber}
Suppose that $q = p^r$ is odd and not equal to $9$, and let $U$ (resp. $U'$) be  
$\PP^1_{\F_q}$ with four rational points removed.   Fix one of the four points $M$ (resp $M'$) to define the Abel-Jacobi embedding, and let $\fm$ (resp. $\fm'$) be the sum of the other three.   Then let $X$ (resp. $X'$) be the ray class field of $\PP^1_{\F_q}$ in which $M$ (resp. $M'$) splits completely, and 
let $J_{\fm}$ (resp. $J_{\fm'}'$) be the generalized 
Jacobian. 

Suppose $\psi : J_{\fm}({\F}_q) \to J_{\fm'}'({\F}_q)$ is 
an isomorphism of groups.  If 
$$L(T,X'/\PP^1, \chi') = L(T,X/\PP^1, \chi' \circ \psi)$$ 
for all characters $\chi'$ of $J_{\fm'}(\F_{q})$, then 
$U$ and $U'$ are Frobenius twists of each other and the map
$\psi$ arises from a morphism of curves which sends $M$ to $M'$ and sends $\fm$ to $\fm'$.
\end{Theorem}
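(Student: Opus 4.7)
The plan is to put $U$ and $U'$ in the standard form of Section~\ref{sec:p1} by applying automorphisms of $\PP^1_{\F_q}$, yielding $X = C_\lambda$, $X' = C_{\lambda'}$, with $M = M' = \infty$ and Galois groups $G, G' \cong \mu_{q-1}^2$. The first step is to establish an inertia correspondence: a cyclic subgroup $H \subset G$ of order $q-1$ is one of the three inertia groups $G_0, G_1, G_\lambda$ of Lemma~\ref{lem:inertia} if and only if $C_\lambda/H \simeq \PP^1$, which can be verified via Riemann-Hurwitz. The genus of $C_\lambda/H$ can be computed from the sum of degrees of the $L$-functions $L(T, X, \chi)$ over non-trivial characters $\chi$ of $G$ trivial on $H$, and by hypothesis this data transports under $\psi$. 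Hence $\psi(\{G_0, G_1, G_\lambda\}) = \{G_0', G_1', G_{\lambda'}'\}$ as sets. After applying an affine automorphism of $\PP^1_{\F_q}$ fixing $\infty$ and permuting $\{0, 1, \lambda'\}$ (which replaces $\lambda'$ by a value in its six-element cross-ratio orbit), I may further assume $\psi(G_0) = G_0'$, $\psi(G_1) = G_1'$, and $\psi(G_\lambda) = G_{\lambda'}'$.

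Next, take $\chi' = \chi_{1,1}$ on $G'$ and set $\chi \colonequals \chi' \circ \psi$ on $G$. I would verify conditions~(1)--(4) of Proposition~\ref{prop:distinguishedchar} for $\chi$: surjectivity is immediate from $\psi$ being an isomorphism; the genus condition~(2) follows from the preserved $L$-function data as above; the ramification condition~(3) is inherited via the inertia correspondence, since $|G_i \cap \ker\chi| = |G_i' \cap \ker\chi'|$ for each $i$; and the Cartier semisimplicity condition~(4) is equivalent by Proposition~\ref{prop:ruck} to the degree modulo $p$ of $P_\lambda(T, \chi^n)$, which is preserved because $L(T, X, \chi^n) = L(T, X', \chi_{n,n})$ and the latter satisfies $C(n,n) \iff C(n)$ via Remark~\ref{remark:kummer}. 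Proposition~\ref{prop:distinguishedchar} then forces $\chi = \chi_{p^k, p^k}$ for some $0 \leq k < r$.

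Applying Proposition~\ref{prop:charcomputation} to the $L$-function equality gives $\lambda^{p^k}/(\lambda^{p^k}-1)^2 \equiv \lambda'/(\lambda'-1)^2 \pmod p$, and a second equality in the spirit of Corollary~\ref{cor:identifyfibers2} (using a suitable auxiliary character, with the choice adjusted when $p = 3$ so that the mod-$p$ coefficient is non-vanishing) resolves the $\lambda \leftrightarrow 1/\lambda$ ambiguity to yield $\lambda' = \lambda^{p^k}$. The Frobenius morphism $\Frob^k$ on $\PP^1_{\F_q}$ then restricts to a map $U_\lambda \to U_{\lambda'}$ sending $M \to M'$ and $\fm \to \fm'$, and composing with the initial normalizations gives the morphism in the original coordinates. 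The hard part will be verifying that this morphism induces the given $\psi$ on generalized Jacobians: here one observes that the dual map $\chi' \mapsto \chi' \circ \psi$ preserves both coordinate axes of $\widehat{G'} \cong (\Z/(q-1))^2$ (by the inertia correspondence), hence is diagonal, so that the identification $\chi_{1,1} \circ \psi = \chi_{p^k, p^k}$ forces it to be multiplication by $p^k$ on both coordinates, which dually is exactly the action of $\Frob^k$ on $\mu_{q-1}^2 = J_\fm(\F_q)$.
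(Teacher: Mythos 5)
Your proposal is correct and shares the paper's core strategy: normalize with $\PGL_2(\F_q)$, transport the distinguished character of Proposition~\ref{prop:distinguishedchar} through $\psi$, verify its defining conditions from the transported $L$-function data, and finish with Proposition~\ref{prop:charcomputation}. The execution differs in the middle and at the end. For the ramification condition~(3), the paper works directly on the quotient $Y = X/\ker\chi$: its genus is $(q-3)/2$ because the zeta functions of $Y$ and $Y'$ agree, and Riemann--Hurwitz for the cyclic degree-$(q-1)$ cover $Y \to \PP^1$ (ramified only over the three punctures, with $\infty$ split) forces $\sum m_i = 4$, hence ramification indices $q-1$, $q-1$, $(q-1)/2$; no inertia correspondence is needed. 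Your route instead first matches up the three inertia subgroups by characterizing them as the cyclic order-$(q-1)$ subgroups with genus-zero quotient; this is true, but the ``only if'' direction is not quite automatic --- for $H = \langle(\zeta^a,\zeta^b)\rangle$ the genus-zero condition reads $\gcd(a,q-1)+\gcd(b,q-1)+\gcd(a-b,q-1)=q+1$, and one must observe that these three gcds are pairwise coprime (since $\gcd(a,b,q-1)=1$) to force one of them to equal $q-1$, so you should supply that short argument. What your extra structure buys is a cleaner treatment of the final clause that the morphism actually \emph{induces} $\psi$, which the paper's proof leaves implicit: diagonality of $\widehat{\psi}$ plus $\chi_{1,1}\circ\psi = \chi_{p^k,p^k}$ pins $\psi$ down as the $p^k$-power map on $\mu_{q-1}^2$, and the auxiliary character then rules out $\lambda'=\lambda^{-p^k}$ so that $\Frob^k$ in the normalized coordinates is the required morphism (the paper instead absorbs the $\lambda\leftrightarrow 1/\lambda$ ambiguity into the automorphism $x\mapsto x/\lambda$, which permutes the punctures but does not induce an axis-preserving map on the generalized Jacobian). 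Your flag about $p=3$ is necessary: $\alpha_{1,2}\equiv -3 \pmod p$ kills the $\chi_{1,2}$ computation there, but e.g.\ $\chi_{1,3}$ works since $\alpha_{1,3}\equiv 4\not\equiv 0 \pmod 3$, and combining the two congruences still isolates $\lambda'=\lambda^{p^k}$ because $\mu+\mu^{-1}=2$ forces $\mu=1$.
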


\begin{proof}
Using the $\PGL_2(\F_q)$-action on $\PP^1_{\F_q}$, we may and do assume that $M = M' = \infty$, $\fm = [0]+[1]+[\lambda]$, and $\fm' = [0]+[1]+[\lambda']$ with $\lambda, \lambda' \ne 0, 1$.  Note that $\lambda$ and $\lambda'$ are not canonical as they depend on the ordering of the removed points.
Consider a character $\chi'$ of $\Gal(X'/\PP^1)$ satisfying conditions (1)-(4) of Theorem~\ref{thm:p1distinguished} (with $P_2 =0 $ say) and let $\chi = \chi' \circ \psi$.  Let $Y = X /\ker \chi$ and $Y' = X'/ \ker \chi'$.  We know that $Y' \to \PP^1$ is a cyclic cover of degree $q-1$ in which $\infty$ splits and that is ramified only above $0, 1,$ and $\lambda'$, and that the genus of $Y'$ is $(q-3)/2$.

Since the zeta functions of $Y$ and $Y'$ are equal by hypothesis, $Y$ also has genus $(q-3)/2$. We claim that $\chi$ also satisfies conditions (1)-(4) of Theorem~\ref{thm:p1distinguished} (with $P_2 \in \{0,1,\lambda\}$), which amounts to showing that $Y \to \PP^1_{\F_q}$ is totally ramified at two of $\{0,1,\lambda\}$ and has ramification index $(q-1)/2$ at the remaining point. 
To see this, let the ramification indices at the points of $Y$ above $0,1$, and $\lambda$ be $(q-1)/m_i$ (they divide $(q-1)$ since the cover is Galois of degree $q-1$).  The Riemann-Hurwitz formula implies that $q-5 = (q-1)(-2) + \sum_{i=1}^3 m_i((q-1)/m_i-1)$
which simplifies to $\sum m_i = 4$.  Thus two are $1$ and one is $2$.
Then the theorem follows from Proposition~\ref{prop:charcomputation}
 and Proposition~\ref{prop:distinguishedchar} as in the proof of Theorem~\ref{thm:p1distinguished}. 
\end{proof}

%

\subsection{Proof of Proposition~\ref{prop:charsdiffer}} \label{ss:technicalproof}

We continue to work over a finite field of size $q = p^r$, writing $k \colonequals (q-1)/2$ and assuming $p >2$.   We thank Z. Brady for help with the proofs of Lemma~\ref{lem:witness} and Proposition~\ref{prop:charsdiffer}; he gave us elegant proofs of the version of these results when $r=1$ (i.e. $q=p$).  Our extensions to the case when $r>1$ have become significantly less elegant, for which we apologize.

We begin with a couple of simple observations about the conditions $C(n)$ and $C(b,c)$ introduced in \eqref{eq:condn} and \eqref{eq:condbc}.

\begin{Lemma} \label{lem:condn}
We have:
\begin{enumerate}
\item \label{condn1}  condition $C(n)$ holds if and only if $C(k-n)$ holds;
\item \label{condn3} for $0< n < q-1$, the condition $C(n)$ is equivalent to condition $C(n,n)$.
\end{enumerate}
\end{Lemma}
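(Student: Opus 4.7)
The plan is to handle the two parts separately. Part (2) is essentially immediate from Remark~\ref{remark:kummer}, while part (1) reduces to a short computation of $k \bmod p^i$ and an easy case split.

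For (2), since $0 < n < q-1$ we have $n \bmod (q-1) = n$, so the condition $C(n,n)$ unpacks to: $2n < q-1$ together with $\alpha_{n,n} \not\equiv 0 \pmod{p}$. By Remark~\ref{remark:kummer}, the latter is equivalent to $2(n \bmod p^i) < p^i$, that is, $(n \bmod p^i) < p^i/2$, for each $1 \leq i \leq r-1$. In the range $0 < 2n < q-1$ this is exactly $C(n)$; when $2n \geq q-1$ (but still $n < q-1$) both $C(n)$ and $C(n,n)$ fail by their range hypotheses, so they are trivially equivalent. This gives (2).

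For (1), the range hypotheses $0 < 2n < q-1$ and $0 < 2(k-n) < q-1$ agree, since $2(k-n) = (q-1) - 2n$. For the Hasse-digit condition, I first compute $k \bmod p^i$ for $1 \leq i \leq r-1$: since $2k = p^r - 1 \equiv -1 \pmod{p^i}$ and $p$ is odd, the unique solution in $[0,p^i)$ is $k \equiv (p^i-1)/2 \pmod{p^i}$. Writing $m = n \bmod p^i$, I then split on whether $m \leq (p^i-1)/2$ or $m \geq (p^i+1)/2$: in the first case both $m$ and $(k-n) \bmod p^i = (p^i-1)/2 - m$ lie in $[0, (p^i-1)/2]$, hence both are $< p^i/2$; in the second case both $m$ and $(k-n) \bmod p^i = (3p^i-1)/2 - m$ lie in $[(p^i+1)/2, p^i - 1]$, hence both exceed $p^i/2$. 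Thus $(n \bmod p^i) < p^i/2$ is preserved under $n \leftrightarrow k-n$, finishing (1).

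The whole argument is elementary and I do not anticipate any real obstacle. The only point requiring a small amount of care is that $p^i/2$ is not an integer (as $p$ is odd), which is what allows the strict inequality $< p^i/2$ to behave symmetrically under the involution $m \mapsto (p^i-1)/2 - m$ on residues modulo $p^i$.
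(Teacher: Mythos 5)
Your proof is correct and takes essentially the same route as the paper, which disposes of part (2) by citing Remark~\ref{remark:kummer} (Kummer's theorem) and calls part (1) ``elementary.'' You have simply written out the elementary details for (1) --- the computation $k \equiv (p^i-1)/2 \pmod{p^i}$ and the symmetry of the strict inequality under $m \mapsto (p^i-1)/2 - m$ --- which the paper omits.
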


\begin{proof}
The first is elementary.  The second was discussed in Remark~\ref{remark:kummer} and is a consequence of a classical theorem of Kummer. 
\end{proof}

We next need some elementary yet tricky lemmas.  

\begin{Lemma} \label{lem:witness}
For any integer $b$ not congruent to $0,1,p,p^2,\ldots, p^{r-1}$ modulo $q-1$, there exists an integer $n$ satisfying $C(n)$ such that $(n b \mod{q-1}) \geq (q-1)/2$.
\end{Lemma}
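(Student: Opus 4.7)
The plan is to analyze multiplication by $b$ on $\Z/(q-1)$ via the $r$-digit base-$p$ representation. Since $p^r \equiv 1 \pmod{q-1}$, multiplication by $p$ acts as a cyclic shift on the $r$ base-$p$ digits of any element of $\Z/(q-1)$. A short analysis shows that $C(n)$ is equivalent to requiring that every base-$p$ digit of $n \in [1,q-2]$ lies in $[0,(p-1)/2]$, with only $n = k$ (all digits equal to $(p-1)/2$) excluded among such expansions. The excluded values for $b$, namely $\{0, 1, p, \ldots, p^{r-1}\}$, are precisely the elements of $\Z/(q-1)$ whose base-$p$ expansion is either zero or has a single~$1$. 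This reframing is the foundation on which everything else sits.

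I would then split into cases based on the digits $(b_0, \ldots, b_{r-1})$ of $b$. The easy case is when some $b_j \ge (p+1)/2$: the element $n = p^{r-1-j}$ lies in $N$, and $nb$, being the cyclic shift of $b$ that places $b_j$ in the top position, satisfies $nb \ge \tfrac{p+1}{2}\,p^{r-1} > k$. When $b = k$, take $n = 1$. The substantive case is when all digits of $b$ lie in $[0,(p-1)/2]$, $b \ne k$, and $b$ is not one of the excluded single-digit values; then $b$ must either have some digit $\ge 2$ (which forces $p \ge 5$) or have at least two digits equal to $1$.

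In this substantive case, the strategy is to construct $n \in N$ so that the base-$p$ multiplication $nb$, after accounting for carries and the reduction $p^r \equiv 1$, lies in $[k, q-2]$. The key structural leverage is that if $n_1, n_2 \in N$ have disjoint digit supports then $n_1+n_2 \in N$ and $(n_1+n_2)b \equiv n_1 b + n_2 b \pmod{q-1}$, so were all products $nb$ with $n \in N$ confined to $[0, k-1]$, one could stack cyclic shifts of $b$ and force contradictions unless $b$'s digits take one of the excluded forms. Concretely, one tries $n = c \cdot p^{r-1-j}$ for a small constant $c \in [1,(p-1)/2]$ chosen so that $cb_j$ produces a leading digit exceeding $(p-1)/2$, or else one takes an $n$ combining two cyclic shifts whose contributions pile into the same digit position.

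The main obstacle is the case $p = 3$, where the admissible digits are only $0$ and $1$, pure multiplication cannot generate carries, and $N$ consists solely of sums of distinct powers of $p$ (excluding $0$ and $k$). Here the constant-times-shift trick is unavailable, and one must rely on the disjoint-support sum trick together with a careful direct analysis of which $\Z/(q-1)$-linear combinations of the cyclic shifts $p^j b$ can land in $[k, q-2]$. This is presumably the source of the authors' remark that their extension of the elegant $r = 1$ argument has become significantly less clean, and is where the proof will demand the most care.
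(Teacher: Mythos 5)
Your reformulation of $C(n)$ in terms of base-$p$ digits is correct (all digits of $n$ in $[0,(p-1)/2]$, excluding $n=0$ and $n=k$), as is the observation that multiplication by $p$ cyclically shifts digits, and your easy cases (some digit $b_j\ge (p+1)/2$ handled by $n=p^{r-1-j}$; $b=k$ handled by $n=1$) go through. But the proof stops exactly where the lemma has content. In the substantive case --- all digits of $b$ in $[0,(p-1)/2]$, $b\ne k$, $b$ not a power of $p$ --- you offer only candidate constructions (``one tries $n=c\cdot p^{r-1-j}$\,\dots or else one takes an $n$ combining two cyclic shifts'') without verifying any of the points on which they could fail: that carries do not push the leading digit back below $(p+1)/2$ or overflow past $p^r-1$ and wrap around; that the chosen $n$ actually lies in $N$ (e.g.\ $n=c\,p^{r-1-j}$ can equal $k$ when $r=1$, and $p^{t_1}+p^{t_2}$ equals $k$ when $p=3$, $r=2$); and your auxiliary claim that disjoint digit supports give $n_1+n_2\in N$ fails when $n_1+n_2=k$. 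For $p=3$ you explicitly concede that the approach is ``unavailable'' and that the case ``will demand the most care,'' which is an admission that the argument is not finished. As it stands this is a plan, not a proof.

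For comparison, the paper proves the lemma by induction on $r$: it finds $n_0$ working modulo $p^{r-1}-1$, lifts it to $n=pn_0+\epsilon$ for a suitable $\epsilon\le (p-1)/2$, and then separately disposes of the residual classes $b\equiv 0,1,p,\dots,p^{r-2}\pmod{p^{r-1}-1}$ by explicit constructions --- a different decomposition from your digit-cyclic one. Your route is in fact viable and arguably cleaner: in the remaining case $b$ either has a digit in $[2,(p-1)/2]$ (so $p\ge 5$ and a single shift times a constant $c\approx p/(2b_j)$ can be made to work after bounding the carries) or has all digits in $\{0,1\}$ with at least two $1$s, where $n=p^{t_1}+p^{t_2}$ placing two $1$s of $b$ into the top position succeeds because each column sum of the two shifted digit strings is at most $2$, so for $p=3$ there are no carries at all and the top digit of $nb$ is $2>(p-1)/2$ (the case you feared most is actually the easiest), while for $p\ge 5$ one must additionally scale by $c=\lceil (p+1)/4\rceil$. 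But none of this verification appears in your write-up, so the proposal has a genuine gap precisely at the heart of the lemma.
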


\begin{proof}
Without loss of generality, we may assume that $1< b < (q-1)/2$.  If $b \geq k=(q-1)/2$ and $q \neq 3$, we take $n=1$ and notice that $C(1)$ holds if $q \neq 3$.  (If $q=3$, there do not exist any $b$ as in the statement of the lemma.)  We proceed by induction on $r$, but treating some special cases by hand.  In this proof only, we will use the notation $C_i(n)$ to denote the condition in \eqref{eq:condn} with $q$ replaced by $p^i$.

For the base case $r=1$, let $n = \lfloor (k-1)/b \rfloor +1$, which satisfies $n < (p-1)/2$ as $b >1$.  We directly see that $nb > (k-1) = (q-1)/2-1$ and that $n b \leq (k-1) + b < q-1$ as $b < (q-1)/2$.

Next, suppose that $b \not \equiv 0,1,p,p^2,\ldots, p^{r-2} \pmod{p^{r-1}-1}$.  By induction, we may suppose that there exists an integer $n_0$ satisfying $C_{r-1}(n_0)$ such that there are integers $a,c$ for which 
\[
n_0 b = a (p^{r-1}-1) + c  \quad \text{with} \quad \frac{p^{r-1}-1}{2} \leq c < p^{r-1}-1.
\]
As $C_{r-1}(n_0)$ holds, we know that $n_0 < (p^{r-1}-1)/2$ and hence
\begin{equation*} 
a = \frac{n_0 b -c}{p^{r-1}-1} < \frac{b-1}{2}. 
\end{equation*}
Then $a(p-1) < b (p-1)/2 - (p-1)/2$ and $(p^r-1)/2 - (p-1)/2 \leq pc$ and hence 
\begin{equation} \label{eq:inequality1}
-b \cdot \frac{p-1}{2} +  \frac{p^r-1}{2} < -(p-1)a + pc  . 
\end{equation}
Let $\epsilon$ be the smallest positive integer such that
\[
\frac{p^r-1}{2} \leq  -(p-1)a + pc  + \epsilon b.
\]
Then \eqref{eq:inequality1} shows that $\epsilon \leq (p-1)/2$.  
Therefore $n \colonequals p n_0 + \epsilon$ satisfies $C_r(n)$.  We compute 
\[
nb = (p^r-1) a - (p-1)a + pc + \epsilon b.
\]
By the choice of $\epsilon$ and as $b < (p^{r}-1)/2$, we conclude that 
\[
\frac{p^r-1}{2} \leq -(p-1)a + pc  + \epsilon b \leq \frac{p^r-1}{2} +b < p^r-1.
\]
Thus $(nb \pmod{p^{r}-1} ) \geq (p^r-1)/2$ as desired.  

Finally, we directly treat the cases that $b \equiv 0,1,p,p^2,\ldots, p^{r-2} \pmod{p^{r-1}-1}$ which the above inductive step does not address.
We first consider the case when $b = a (p^{r-1}-1)$ with $a \in \{1,2,\ldots,p\}$.  If $a=1$, take $n=p$.  Otherwise take $n = \lfloor (p-1)/(2a) \rfloor +1$.  For both, we see that $n$ satisfies $C_r(n)$ and $(nb \mod{p^r-1}) \geq (p^r-1)/2$.

In the other cases, write $b = p^i + a (p^{r-1}-1)$ with $i \in \{0,1,\ldots,r-2\}$ and $0<a\leq p$.  

\begin{description}
\item[The case $i=0$ and $a>1$]  First suppose $b = 1 + a (p^{r-1}-1)$ with $a>1$.  If $a<p/2$, take $n= p$, and observe that $nb \equiv p + a(1-p) \pmod{p^r-1}$ so $(nb \mod{p^r-1}) \geq (p^r-1)/2$.  If $a > p/2$, take $n=1$ and observe that $b \geq (p^r-1)/2$.  In both cases $C_r(n)$ holds. 

\item[The case $a=1$ and $i>0$]  Now suppose $b = p^i + (p^{r-1}-1)$ with $i>0$.  Taking $n=c p^{r-i-1} + c $,  we compute that 
\[
nb \equiv 2c p^{r-1} + c p^i - c - c p^{r-i-2}(p-1) \mod{p^r-1}.
\]
If we choose $c = (p-1)/2$, then we see that
\[
2c p^{r-1} + c p^i - c - c p^{r-i-2}(p-1) \leq 2 c p^{r-1} + c p^i \leq (p-1)p^{r-1} + \frac{p-1}{2} p^i \leq p^r -1.
\]
We also compute that
\[
2c p^{r-1} + c p^i - c - c p^{r-i-2}(p-1)  \geq (p-1) p^{r-1} - \frac{p-1}{2} - (p-1)^2 p^{r-i-2}/2 \geq (p^r-1)/2.
\]
Thus $(nb \mod{p^r-1}) \geq (p^r-1)/2$ and we directly see that $C_r(n)$ holds.

\item[The case $i>0$ and $a>1$]  Now suppose $ b=p^i + a(p^{r-1}-1)$ with $a>1$ and $i >0$.  If $a > p/2$, take $n=1$.  Otherwise, choose $a' = \lceil (p-1)/(2a) \rceil$ and let $n = p^{r-i-1} + a'$.  Notice that $C_r(n)$ holds as $a>1$, and that $(p+1)/2 \leq aa'+1 \leq p-1$.  We compute that
\[
nb \equiv p^{r-1} + a a' (p^{r-1}-1) + a p^{r-i-2}(1-p) + a' p^i \mod{p^r-1}.
\]
 As $r-i-2< r-2$ and $i < r-1$, we see that the dominant term in the above expression is $(1 + a a') p^{r-1}$.  In particular, 
\[
p^{r-1} + a a' (p^{r-1} -1)+ a p^{r-i-2}(1-p) + a' p^i \leq (a a' +1 ) p^{r-1}+ a' p^i < p^{r}-1
\]
and
\[
p^{r-1} + a a' (p^{r-1}-1) + a p^{r-i-2}(1-p) + a' p^i \geq (a a' +1 )p^{r-1} + a p^{r-i-2} (1-p) \geq (p^r-1)/2
\]
as $a < p/2$.  
Thus $(nb \mod{p^r-1}) \geq (p^r-1)/2$ as desired.
\end{description}
Our argument has covered all of the cases, as we have assumed $b$ is not congruent to $0,1,p,p^2,\ldots, p^{r-1}$ modulo $p^r-1$.
\end{proof}

\begin{Lemma} \label{lem:equalpowers}
Suppose $b = p^{m_1}$ and $c = p^{m_2}$ with $ 0 \leq m_1 < m_2 < r$.  For odd $q \neq 9$, there exists an integer $n$ for which $C(n)$ does \emph{not} hold and for which $C(nb,nc)$ holds.  
\end{Lemma}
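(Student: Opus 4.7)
The plan is to exhibit a single explicit witness---namely $n = (p+1)/2$---and verify the two required conditions directly.

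First I would check that $C(n)$ fails. Since $m_1 < m_2 < r$ forces $r \ge 2$, condition $C(n)$ includes the requirement $(n \bmod p) < p/2$. But $n = (p+1)/2 < p$ equals its own residue modulo $p$, and $(p+1)/2 > p/2$, so $C(n)$ fails at $i = 1$.

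Next I would verify that $C(nb, nc)$ holds. Write $nb = \tfrac{p+1}{2} p^{m_1}$ and $nc = \tfrac{p+1}{2} p^{m_2}$ as integers. A short size bound (reducing to $p^{r-1} + 2 < p^r$, immediate for odd $p \ge 3$ and $r \ge 2$) shows each is less than $p^r - 1$, and hence equals its residue modulo $q-1$. In base $p$, each expansion has a single nonzero digit $(p+1)/2$, sitting in position $m_1$ or $m_2$ respectively. Since $m_1 \ne m_2$, no column of $nb + nc$ sums to more than $(p+1)/2 < p$, so by the Kummer criterion of Remark~\ref{remark:kummer}, $\alpha_{nb, nc} \not\equiv 0 \pmod p$.

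It remains to show the size condition $nb + nc < q - 1$. The sum is largest at $(m_1, m_2) = (r-2, r-1)$, where it equals $\tfrac{(p+1)^2}{2} p^{r-2}$; the desired inequality then reduces to $p^{r-2}(p^2 - 2p - 1) > 2$, which fails exactly at $(p, r) = (3, 2)$---that is, $q = 9$---and holds in all other permitted cases. I expect this final inequality to be the only subtle step: it is precisely where the hypothesis $q \ne 9$ becomes essential, and the exceptional behaviour at $q = 9$ mirrors the exception already visible in Proposition~\ref{prop:charsdiffer}. Everything else is routine bookkeeping with base-$p$ digits.
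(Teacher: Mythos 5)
Your proposal is correct and follows essentially the same route as the paper: the same witness $n=(p+1)/2$, failure of $C(n)$ via the residue condition at $i=1$, Kummer's criterion applied to the carry-free base-$p$ addition of $\tfrac{p+1}{2}p^{m_1}$ and $\tfrac{p+1}{2}p^{m_2}$, and the size bound whose only failure is the case $q=9$. Your sharper accounting of exactly where the inequality $nb+nc<q-1$ fails is a slight refinement of the paper's looser estimate, but the argument is the same.
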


\begin{proof}
Take $n = (p+1)/2$.  Notice that $ p/2 < n < p$, so $C(n)$ does not hold.  On the other hand,
\begin{align*}
( n b \mod{2k}) + (n c \mod{2k}) & = \frac{p+1}{2} ( p^{m_1} + p^{m_2}) \\
& \leq \frac{p+1}{2} ( p^{r-1} + p^{r-2}) \\
& \leq p^r \left( \frac{1}{2} + \frac{1}{p} + \frac{1}{2p^2} \right) < q-1.
\end{align*}
The last inequality uses that $p>3$ or that $p=3$ and $r>2$.  Finally, notice that 
\[ \alpha_{nb,nc} \equiv \pm \binom{ (p+1)/2 \cdot p^{m_1} + (p+1)/2 \cdot p^{m_2} }{(p+1)/2 \cdot p^{m_1}} \pmod{p}
\]
and that this binomial coefficient is non-zero modulo $p$ by Kummer's theorem on binomial coefficients \cite{kummer}.
\end{proof}

We are now ready to prove Proposition~\ref{prop:charsdiffer}; we again thank Z. Brady for the key idea.

\begin{proof}[Proof of Proposition~\ref{prop:charsdiffer}]
As $b$ is odd, for any integer $n$
\begin{equation} \label{eq:congruence}
A(b,n) := (nb  \mod{2k}) + ((k-n) b  \mod{2k}) \equiv k \pmod{2k}.
\end{equation}
In particular, $A(b,n)  > 0$.

If $C(n)$ holds, then $C(k-n)$ holds by Lemma~\ref{lem:condn}(\ref{condn1}).  By hypothesis 
$C(nb,nc)$ and $C((k-n) b, (k-n)c)$ hold as well, so 
\begin{align*}
0<A(b,n) + A(c,n) &= (nb  \mod{2k}) + ((k-n) b  \mod{2k}) \\
&+   (n c \mod{2k}) + ((k-n) c \mod{2k}) < 4k.
\end{align*}
As $A(b,n) + A(c,n)  \equiv k + k \equiv 0 \pmod{2k}$ 
and $0 < A(b,n) + A(c,n)$, we conclude that $A(b,n) + A(c,n) = 2k$ if $C(n)$ holds.

We next claim that if $C(n)$ holds then $(n b \mod{2k}) < k$.  If $(n b \mod{2k}) > k$, then \eqref{eq:congruence} shows that $A(b,n) > 2k$.  This is impossible as we know that $A(b,n) + A(c,n) = 2k$ and each term is non-negative.  Since $b$ is relatively prime to $2k$ and $n <k$ as $C(n)$ holds, the claim follows.

If $b$ were not a power of $p$ or $1$, Lemma~\ref{lem:witness} would give an integer $n$ for which $C(n)$ holds and such that $(nb \mod{2k}) \geq k$, contradicting the claim.  Thus $b \in \{1,p,p^2,\ldots, p^{r-1}\}$.  Likewise, we see that $c \in \{1,p,p^2,\ldots, p^{r-1}\}$.  Then Lemma~\ref{lem:equalpowers} shows that either $b=c$ or we are in the exceptional case with $q=9$.  This completes the proof.
\end{proof}

\section{General Affine Curves}

Given an equation $F(x,y) = 0$ defining a smooth projective curve $C/\F_q$, our goal is to recover the coefficients
of the equation from an explicit set of $L$-functions for covers of $C$.  We will do so using Artin-Schreier covers.  Let $K$ be the function field of $C$ and
$p$ the characteristic of $K$.

In what follows, we will deal with extensions $\F_{q^m}/\F_q$ and consider 
both the relative trace $\Tr_m: \F_{q^m} \to \F_q$ and the absolute trace
down to the prime field $\Tr: \F_{q^m} \to \F_p$.

\subsection{Exponential Sums} \label{ss:exp} We begin by reviewing the connection between $L$-functions for Artin-Schreier extensions and exponential sums; a standard reference is \cite[VI]{Bom}.  

Artin-Schreier extensions are cyclic degree $p$ extensions of $K$, and are given by the equations $z^p -z =f$ where $f \in K$ is not equal to $w^p - w$ for any $w \in K$.  For a fixed $f$, regular on an open subset $U$ of $C$, the conductor of $E/K$ (where $E=K(z)$, with $z^p-z=f$) is bounded above by $(f)_\infty$, the divisor of poles of $f$ on $C$.  

There is a character $\chi_f : \Gal(E/K) \to \C^\times$
such that $\chi_f(\Frob_P) = \exp(2\pi i \Tr(f(P))/p)$ for $P \in U$,
where $\Tr$ is the absolute trace $\Tr: \F_{q^m} \to \F_p$. 
We assume that $E/K$ is ramified outside $U$ and consider the exponential sums
$$S_m(\chi_f) = S_m(f) \colonequals \sum_{P \in U(\F_{q^m})} \exp(2\pi i \Tr(f(P))/p).$$
The values of these sums are determined by $L(T,C,\chi_f)$.

When $p \neq 2$, taking $\varpi = 1 - \exp(2\pi i /p)$ we note that 
$\Z[\exp(2\pi i /p)]/(\varpi)$ is isomorphic to $\F_p$ and that
\begin{equation}
S_m(f) = \sum_{P \in U(\F_{q^m})}(1-\varpi)^{\Tr(f(P))} \equiv 
|U(\F_{q^m})| - \left( \sum_{P \in U(\F_{q^m})}{\Tr(f(P))} \right)\varpi \pmod{\varpi^2}.
\end{equation}

If $p=2$, we instead consider the exponential sums 
$$S_m({\chi_{\f}}) = \sum_{P \in U(\F_{q^m})} e^{2\pi i T({\f}(P))/4}$$
where $\f = (f,0)$ is a Witt vector of length $2$ and
$T:W_2(\F_{q^m}) \to W_2(\F_2) \isom \Z/4\Z$ is the absolute trace (see \cite{VW}).
We can form an $L$-function as before, which will be a factor of the
zeta function of an Artin-Schreier-Witt cover of $C$. Letting $\varpi = 1 - i$, we
get $S_m({\f}) \equiv |U(\F_{q^m})| - (\sum_{P \in U(\F_{q^m})}{\Tr(f(P))})\varpi \pmod{\varpi^2}$.

\begin{Proposition} \label{prop:determinesums}
Let $\gamma_1, \ldots, \gamma_u$ be a basis for $\F_q$ over $\F_p$.
The $L$-function $L(T,C,\chi_{\gamma_i f})$ determines the value of
\begin{equation}
T_m(f):=\sum_{P \in U(\F_{q^m})}{\Tr_m(f(P))} \in \F_q
\end{equation}
for each $m$, where $\Tr_m: \F_{q^m} \to \F_q$ is the relative trace.
\end{Proposition}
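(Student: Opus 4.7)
The plan is to extract exponential sum information from each $L$-function, use the congruence modulo $\varpi^2$ to recover absolute traces, and then invert the trace pairing to recover the relative-trace sum $T_m(f)$.

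First, I would observe that by the standard relation \eqref{eq:standard} between an $L$-function and power sums of character values at Frobenius elements, knowing $L(T,C,\chi_{\gamma_i f})$ for all $m$ is equivalent to knowing the exponential sums $S_m(\chi_{\gamma_i f})$ for all $m$ (these are just the $S_n(\chi)$ of \eqref{eq:standard} in the Artin–Schreier setting). In particular we recover the reduction of $S_m(\gamma_i f)$ modulo $\varpi^2$, and by the congruence already displayed in Section~\ref{ss:exp} this yields the value
\[
\sum_{P\in U(\F_{q^m})} \Tr(\gamma_i f(P)) \in \F_p,
\]
where $\Tr:\F_{q^m}\to\F_p$ is the absolute trace (and likewise in the $p=2$ case via the Witt-vector version).

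Next, I would use transitivity of trace, $\Tr = \Tr_{\F_q/\F_p}\circ \Tr_m$, together with the fact that $\gamma_i\in\F_q$ commutes with $\Tr_m$:
\[
\sum_{P\in U(\F_{q^m})} \Tr(\gamma_i f(P))
= \Tr_{\F_q/\F_p}\!\left(\gamma_i \sum_{P\in U(\F_{q^m})} \Tr_m(f(P))\right)
= \Tr_{\F_q/\F_p}(\gamma_i\, T_m(f)).
\]
Thus each of the $r$ given $L$-functions determines one linear functional $\gamma_i\mapsto \Tr_{\F_q/\F_p}(\gamma_i\, T_m(f))$ applied to the unknown element $T_m(f)\in\F_q$.

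Finally, I would invoke non-degeneracy of the $\F_p$-bilinear pairing $\F_q\times \F_q\to\F_p$ given by $(x,y)\mapsto \Tr_{\F_q/\F_p}(xy)$: since $\gamma_1,\dots,\gamma_u$ is an $\F_p$-basis of $\F_q$, the linear functionals $y\mapsto \Tr_{\F_q/\F_p}(\gamma_i y)$ span $\Hom_{\F_p}(\F_q,\F_p)$, so knowing their values at $T_m(f)$ pins down $T_m(f)$ uniquely. This completes the recovery.

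The argument is mostly bookkeeping; the only point requiring care is the interplay between the absolute trace (which appears naturally in the Artin–Schreier character, and hence in $S_m$ mod $\varpi^2$) and the relative trace appearing in the definition of $T_m(f)$, resolved via the transitivity identity above. In characteristic $2$ the same plan works verbatim using the Witt-vector trace $T$ in place of $\Tr$ and the identical congruence displayed for $S_m(\f)$ modulo $\varpi^2$.
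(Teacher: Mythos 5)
Your proof is correct and follows essentially the same route as the paper's: recover the exponential sums from the $L$-functions, reduce modulo $\varpi^2$ to get the absolute-trace sums, and invert the non-degenerate trace pairing. You simply make explicit the trace-transitivity step that the paper leaves implicit.
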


\begin{proof}
We have seen that the knowledge of the L-functions $L(T,C,\chi_{\gamma_i f})$ provides us with 
\[\sum_{P \in U(\F_{q^m})}{\Tr(\gamma_i f(P))} \in \F_p\]
 for each $m$.  But these determine $T_m(f)$ as the trace pairing is non-degenerate.
\end{proof}

\subsection{Recovering Affine Equations from Exponential Sums}

We now show how to use $L$-function for appropriate Artin-Schreier covers of a curve to recover the coefficients of a defining equation via considering exponential sums.  

\begin{Theorem} 
\label{thm:equations}
For fixed odd $q$ and $d \ge 1$, there is an explicit finite
set of polynomials $f \in \F_q[x,y]$ such that we can
recover the coefficients of any absolutely irreducible $F \in \F_q[x,y]$ of degree $d$ 
from the $L$-functions of the Artin-Schreier extensions $z^p-z=f$ of the function field $K$ for the curve determined by $F=0$.
\end{Theorem}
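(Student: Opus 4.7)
The plan is to use $L$-functions of a finite collection of ``Galois-orbit indicator'' polynomials to recover the set $U(\F_{q^m})$ of affine $\F_{q^m}$-points of the curve for all $m$ in a bounded range, and then to reconstruct $F$ up to a scalar via B\'ezout's theorem.

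First, I construct the explicit set. Choose $M = M(q,d)$ large enough that some integer $m_0 \leq M$ satisfies $\gcd(m_0, p) = 1$ and $q^{m_0} > d^4$; such $M$ depends only on $q$ and $d$. For each $m \leq M$ coprime to $p$ and each orbit $O$ of $\Gal(\F_{q^m}/\F_q)$ on $\F_{q^m}^2$, set
\[
f_O(x,y) \;=\; \sum_{(a,b)\in O} \bigl(1 - (x-a)^{q^m-1}\bigr)\bigl(1 - (y-b)^{q^m-1}\bigr).
\]
Individual summands have coefficients in $\F_{q^m}$, but summing over a complete Galois orbit produces a polynomial whose coefficients are fixed by $\Gal(\F_{q^m}/\F_q)$, so $f_O \in \F_q[x,y]$; its total degree is at most $2(q^M - 1)$. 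Fixing a basis $\gamma_1, \ldots, \gamma_u$ of $\F_q$ over $\F_p$, the explicit finite set is $\mathcal{S} = \{\gamma_i f_O\}$ as $i$, $m$, and $O$ range over all allowed choices.

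Next, I exploit that $f_O$ is the $\{0,1\}$-valued indicator of $O$ on $\F_{q^m}^2$: for $P \in U(\F_{q^m})$ one has $\Tr_m(f_O(P)) = m \cdot 1_{P \in O}$, and therefore
\[
T_m(f_O) \;=\; m \cdot |U(\F_{q^m}) \cap O| \;\in\; \F_q.
\]
Because $U(\F_{q^m})$ is Galois-stable, $U(\F_{q^m}) \cap O$ is either empty or all of $O$; the two cases are distinguished by $T_m(f_O)$ because $m$ and $|O|$ (which divides $m$) are both units in $\F_q$. By Proposition~\ref{prop:determinesums}, the $L$-functions of the extensions $z^p - z = \gamma_i f_O$ determine $T_m(f_O)$, so $\mathcal{S}$ and its $L$-functions recover $U(\F_{q^m})$ exactly for every $m \leq M$ with $\gcd(m,p)=1$.

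Finally, I apply the Hasse-Weil bound to the smooth projective model of $V(F)$, whose geometric genus is at most $(d-1)(d-2)/2$: together with a bound of $O(d^2)$ for the number of points at infinity and singular points of the plane model, this gives
\[
|U(\F_{q^{m_0}})| \;\geq\; q^{m_0} - (d-1)(d-2)\sqrt{q^{m_0}} - C_d \;>\; d^2
\]
for an explicit constant $C_d$ and our choice of $m_0$. Any other absolutely irreducible $G \in \F_q[x,y]$ of degree $d$ vanishing on $U(\F_{q^{m_0}})$ would force $|V(F) \cap V(G)| > d^2 = \deg F \cdot \deg G$, and B\'ezout's theorem then implies that $F$ and $G$ share an irreducible component; absolute irreducibility together with equal degree then makes $G$ a scalar multiple of $F$. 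Thus $F$ is recovered up to an $\F_q^\times$-scalar. The main obstacle is bookkeeping rather than a deep new idea: making $M(q,d)$ explicit (in particular, ensuring an $m_0$ coprime to $p$ with $q^{m_0}>d^4$ exists), verifying the Galois descent of $f_O$, and making the Hasse-Weil bound uniform over all absolutely irreducible $F$ of degree $d$. The resulting $\mathcal{S}$ is highly redundant and uses polynomials of degree $\sim d^4$, so the construction is explicit but far from economical; the Legendre-family example in the introduction illustrates how far one can compress $\mathcal{S}$ in practice when $F$ has extra structure.
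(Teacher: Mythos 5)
Your overall route differs from the paper's (which fixes an auxiliary irreducible $h(x)$, extracts the power sums $\sum_{b:F(a,b)=0}b^j$ over the roots of $h$ via a Vandermonde inversion, and rebuilds $F$ fiber-by-fiber from Lemma~\ref{lem:powers}), and your endgame via Hasse--Weil and B\'ezout is fine as far as it goes. But there is a genuine gap at the central step: the claim that ``by Proposition~\ref{prop:determinesums}, the $L$-functions of $z^p-z=\gamma_i f_O$ determine $T_m(f_O)$.'' Proposition~\ref{prop:determinesums} sits inside the framework of Section~\ref{ss:exp}, where it is \emph{assumed} that the Artin--Schreier cover is ramified at every place outside $U$; only then does \eqref{eq:standard} convert the $L$-function into the sum $\sum_{P\in U(\F_{q^m})}\exp(2\pi i\Tr(f(P))/p)$ over exactly the affine points. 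If the polar part of $\gamma_i f_O$ at some place at infinity of $C$ happens to be of the form $w^p-w$, that place is unramified, contributes an Euler factor, and the power sums you read off the $L$-function acquire unknown terms $\chi(\Frob_{v})$ that are not given by evaluating $f_O$ (which has a pole at $v$) and cannot be subtracted off without already knowing $F$. Since you need the exact value of $T_m(f_O)$ in $\F_q$ (not an approximation), even a bounded contamination from at most $d$ infinite places is fatal. Your $f_O$ give you no control over their polar parts on the unknown curve, so this hypothesis is unverified. This is precisely the issue the paper spends most of Lemma~\ref{lem:f} on: it perturbs the candidate $f$ by $(x^{q^m}-x)g$ over many $g$, shows by a dimension count that at least half the perturbations are totally ramified at infinity, and recovers the correct $T_m$ as the majority value. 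Your construction needs the same (or an equivalent) device, and without it the proof does not go through; it is not mere bookkeeping.

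Two smaller points. First, you identify $U(\F_{q^m})$ with the affine solution set of $F=0$, but the exponential sums live on the smooth model: a singular point of the plane curve with $k$ rational branches contributes $k$ to the count, and if $p\mid k$ your indicator test wrongly reports the orbit as off the curve. This does not sink the B\'ezout step (you still retain all smooth points, more than $d^2$ of them), but it should be acknowledged; the paper sidesteps it by choosing $h$ so that the fibers over its roots consist of $m\deg_y F$ distinct points. Second, recovering $F$ only up to an $\F_q^\times$-scalar is unavoidable (the function field and the $L$-functions are unchanged under scaling $F$), and the paper's proof implicitly has the same normalization ambiguity, so that is not a defect of your argument.
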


\begin{remark} 
When $q$ is even we would need the Artin-Schreier-Witt extensions discussed in Section~\ref{ss:exp}. Our proof does not apply as written, but we expect it to adapt.
\end{remark}

The proof will use the following lemma about the values of symmetric functions and power sums.  For a positive integer $n$, let $e_1,\ldots e_n$ be the elementary symmetric functions in $x_1,\ldots, x_n$ viewed as elements of $\Z[x_1,\ldots,x_n]$, and define
\[
p_k = \sum_{i=1}^n x_i^k \in \Z[x_1,\ldots,x_n].
\]

\begin{Lemma} \label{lem:powers}
Fix a field $\kk$ of characteristic $p$ and an integer $n>0$. 
Given $\alpha_1,\alpha_2, \ldots  \alpha_r \in \kk$ and multiplicities $n_1, n_2, \ldots , n_r$ with $1 \leq n_i <p$ and $n=\sum_{i=1}^r n_i$, let $\beta_1, \ldots \beta_n$ be these $n$ values, with $\alpha_i$ occurring $n_i$ times.  The sequence of values of the power sums
\[
p_j(\beta_1,\ldots, \beta_n) = \sum_{i=1}^r n_i \alpha_i^j
\]
for all $j$ uniquely determine the values of the symmetric functions $e_j(\beta_1,\ldots, \beta_n)$ for $1 \leq j \leq n$.  If the field $\kk$ is a finite field with $q$ elements, it suffices to know the values of the power sums for $j=1 ,\ldots, q-1$.
\end{Lemma}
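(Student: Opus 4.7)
The plan is to reconstruct the multiset $\{\beta_1,\ldots,\beta_n\}$ from the power sums via the logarithmic derivative of the characteristic polynomial. I will work with $E(T) = \prod_{i=1}^n(1-\beta_i T) = \prod_{i=1}^r(1-\alpha_i T)^{n_i} \in \kk[T]$; its coefficients are, up to sign, the $e_j$, so reconstructing $E$ is equivalent to reconstructing the $e_j$.

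First, I will establish the formal identity
\[
\frac{E'(T)}{E(T)} \;=\; -\sum_{i=1}^r \frac{n_i\alpha_i}{1-\alpha_i T} \;=\; -\sum_{j\ge 1}p_j\, T^{j-1},
\]
where the first equality is a direct differentiation (legal because the $n_i$'s are integers, not just elements of $\kk$) and the second comes from expanding each $(1-\alpha_i T)^{-1}$ as a geometric series. This shows that knowing the sequence $(p_j)_{j\ge 1}$ is equivalent to knowing the rational function $E'(T)/E(T)$.

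Next, I will exploit the hypothesis $1\le n_i<p$, which guarantees that each $n_i$ is nonzero in $\kk$. After discarding any contribution with $\alpha_i=0$ (which does not appear on the right-hand side), $E'(T)/E(T)$ has a simple pole at $T=1/\alpha_i$ with residue $n_i$ for each distinct nonzero $\alpha_i$. So partial fractions let me read off both the distinct nonzero $\alpha_i$ and their integer multiplicities $n_i\in\{1,\ldots,p-1\}$ directly from $E'/E$, and hence from the $p_j$'s. The multiplicity of $0$ among the $\beta_i$, if any, is then $n-\sum_{\alpha_i\ne 0}n_i$, which is determined because $n$ is given. This reconstructs the full multiset $\{\beta_1,\ldots,\beta_n\}$, and thus $e_1,\ldots,e_n$.

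Finally, for the case $\kk=\F_q$, I will use $\alpha^{q-1}=1$ for every $\alpha\in\F_q^\times$, which gives the periodicity $p_{j+q-1}=p_j$ for all $j\ge 1$. Hence $p_1,\ldots,p_{q-1}$ determine the entire sequence $(p_j)_{j\ge 1}$ and the previous argument applies. The main obstacle is the delicate use of the assumption $n_i<p$: without it, some residues in the partial fraction expansion would vanish modulo $p$ and multiplicities could not be disentangled; with it, each residue carries the exact multiplicity as an element of $\F_p$ which, since $1\le n_i<p$, lifts uniquely back to the integer $n_i$.
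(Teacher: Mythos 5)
Your proposal is correct and follows essentially the same route as the paper's proof: take the logarithmic derivative of $\prod_i(1-\beta_iT)$, recover the rational function from the power sums, read off the distinct nonzero $\alpha_i$ and their multiplicities from the partial fraction decomposition (the residues give $n_i$ modulo $p$, which lift uniquely since $1\le n_i<p$), handle a zero root by subtracting the other multiplicities from $n$, and use $p_{j+q-1}=p_j$ over $\F_q$.
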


We defer the proof to the next subsection.  

\begin{Lemma} \label{lem:h}
There exists an integer $m > d$ with $p \nmid m$ (depending on $q$ and $d$ only) and an irreducible polynomial $h \in \F_q[x]$ of degree $m$ such that all solutions to $F(x,y)=h(x)=0$ are defined over $\F_{q^m}$ and that there are $m \deg_y F$ of them.  Whether a particular $h$ works can be verified using exponential sums on $C$.
\end{Lemma}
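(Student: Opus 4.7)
I will establish existence by a Chebotarev/Weil-bound argument applied to the Galois closure of the $x$-projection $\pi\colon C\to \PP^1$, and then give an explicit verification recipe via additive-character orthogonality on $\F_q$.

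For existence, set $n=\deg_y F\le d$, let $L$ be the Galois closure of $K/\F_q(x)$, $G=\Gal(L/\F_q(x))$ (a transitive subgroup of $S_n$ since $F$ is absolutely irreducible), and $\F_{q^s}$ its constant field, so $|G|\le n!\le d!$ and $s$ divides $|G|$. The ramification of $L/\F_q(x)$ is supported over the pole of $x$ and the zeros of $\disc_y(F)$, giving a Riemann--Hurwitz bound on the genus $g(L)$ depending only on $d$. A degree-$m$ place of $\F_q(x)$ splits completely in $K$ iff it does so in $L$ iff its Frobenius in $G$ is trivial, so applying Weil's point-count bound to $L$ viewed as a curve over $\F_{q^s}$ shows that the number of degree-$m$ places of $\F_q(x)$ splitting completely in $L$ is positive whenever $s\mid m$ and $m$ is sufficiently large in terms of $q$ and $d$. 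Imposing additionally $m>d$ and $\gcd(m,p)=1$ yields an admissible $m=m(q,d)$, and for any corresponding irreducible $h\in\F_q[x]$ of degree $m$, above each root $\alpha\in\F_{q^m}$ of $h$ the polynomial $F(\alpha,y)$ splits into $n$ distinct $\F_{q^m}$-rational roots, giving exactly $mn$ solutions.

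For verification, I check a candidate $h$ by computing the integer $N_m(h):=\#\{(x,y)\in C(\F_{q^m}):h(x)=0\}$ from exponential sums and confirming $N_m(h)=mn$. For $x_0\in\F_{q^m}$ the relative norm $\Norm_{\F_{q^m}/\F_q}(h(x_0))$ equals the value at $x_0$ of the explicit polynomial
\[
\mathcal N_m(h)(x) \;:=\; \prod_{i=0}^{m-1} h\bigl(x^{q^i}\bigr) \in \F_q[x],
\]
which lies in $\F_q$ and vanishes iff $h(x_0)=0$. Since $\gcd(m,p)=1$, the map $a\mapsto \exp(2\pi i\,m\,\Tr_{\F_q/\F_p}(a)/p)$ is a nontrivial additive character of $\F_q$, so for any $x_0\in\F_{q^m}$ additive-character orthogonality gives
\[
N_m(h) \;=\; \frac{1}{q}\sum_{\gamma\in\F_q} S_m\bigl(\gamma\,\mathcal N_m(h)(x)\bigr),
\]
each summand being extracted from the $L$-function of the Artin--Schreier extension $z^p-z=\gamma\,\mathcal N_m(h)(x)$. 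As $h$ varies over the (finite) irreducible polynomials of degree $m$ in $\F_q[x]$ and $\gamma$ over $\F_q$, the polynomials $\gamma\,\mathcal N_m(h)(x)$ form a finite list depending only on $q$ and $d$, which can be absorbed into the set of $f$'s produced by Theorem~\ref{thm:equations}.

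The main obstacle I anticipate is extracting explicit bounds on $|G|$, $s$, and $g(L)$ that are uniform over all absolutely irreducible $F\in\F_q[x,y]$ of degree $d$, so that one and the same $m=m(q,d)$ and the accompanying finite list of test polynomials $\mathcal N_m(h)(x)$ work for every such $F$; the orthogonality identity itself is elementary once those uniformities are in hand.
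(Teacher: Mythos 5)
Your proof is essentially correct, and while the existence half follows the same route as the paper (the paper simply invokes Chebotarev for the splitting field of $F$ over $\F_q(x)$; your Galois-closure/Weil-bound sketch is a fleshed-out version of that), your verification step is genuinely different. The paper checks a candidate $h$ by running over \emph{all} $s\in\F_q[x]$ with $\deg s<m$, forming the indicator-type functions $f=(1-h^{q^m-1})(1-(y-s)^{q^m-1})$, and counting how many $s$ give $T_m(f)=m^2\neq 0$; since Frobenius-stable solution orbits over the roots of $h$ correspond bijectively to such $s$, the count equals (number of solutions)$/m$. You instead compute the exact point count $N_m(h)$ in one shot via the norm polynomial $\mathcal N_m(h)(x)=\prod_i h(x^{q^i})$ and orthogonality of additive characters of $\F_q$ (correctly using $p\nmid m$ so that $a\mapsto \exp(2\pi i\,m\Tr_{\F_q/\F_p}(a)/p)$ is nontrivial). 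Your version needs only $q$ auxiliary functions per candidate $h$ rather than $q^m$, and yields the integer $N_m(h)$ directly rather than an orbit count, at the price of auxiliary polynomials of somewhat larger degree; the paper's version has the side benefit of actually locating the Frobenius orbits of solutions, in the same spirit as the interpolation trick used again in Lemma~\ref{lem:f}. One caveat, which your writeup and the paper's own proof of this lemma share equally: extracting $S_m(\gamma\,\mathcal N_m(h))$ from the $L$-function via Proposition~\ref{prop:determinesums} tacitly assumes the Artin--Schreier cover is ramified at every point of $C\setminus U$ (here, every point over $x=\infty$); the paper only enforces this, by averaging over perturbations $1+(x^{q^m}-x)g$, in Lemma~\ref{lem:f}, and the same fix would apply verbatim to your test functions. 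This is a technicality, not a gap specific to your argument.
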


\begin{proof}
Using Chebotarev, for sufficiently large $m$ there exists an irreducible polynomial $h \in \F_q[x]$ of degree $m$ 
such that all solutions of 
$F(x,y)=h(x)=0$ are defined over $\F_{q^m}$ and there are
$m\deg_y F$ of them.

To check this condition for a given irreducible $h \in \F_q[x]$, run through all $s \in \F_q[x]$ with $\deg s < m$ and
take $f=\left(1-h(x)^{q^m-1}\right) \left(1-(y-s(x))^{q^m-1} \right)$.  Notice that for $a,b \in \F_{q^m}$ we have $f(a,b)\neq 0$ if and only if $h(a) = 0$ and $b=s(a)$.  When $f(a,b)$ is non-zero, it takes on the value $1$.  If there exists $a \in \F_{q^m}$ such that $h(a) = 0$ and $F(a,s(a))=0$, it follows that $F(a^{q^j},s(a^{q^j}))=0$ for $j=1,\ldots,m-1$, and hence $F(a,s(a))=0$ for all roots $a$ of $h$ in $\F_{q^m}$.  In this case we conclude that $T_m(f) = m^2$ and we have $m$ solutions to $h(x) = F(x,y)=0$ over $\F_{q^m}$ given by $(a^{q^j}, s(a^{q^j}))$ for $j=0 \ldots m-1$.  If not, then  $T_m(f)=0$. 

By varying $s$ so that it takes on different values at the roots of $h$, we can detect whether there are $m \deg_y F$ solutions to $F(x,y)=h(x)=0$ and that are defined over $\F_{q^m}$.
\end{proof}

\begin{Lemma} \label{lem:f}
Choose $h$ as in Lemma~\ref{lem:h}.  For any fixed $i,j$, there exists $f \in \F_q[x,y]$ such that for $(a,b) \in \F_{q^m}^2$:
\begin{enumerate}
\item \label{cond:ram}  $z^p -z = f$ is irreducible and the Artin-Schreier extension is ramified over every infinite point of $C$;

\item  if $h(a) \neq 0$ then $f(a,b) = 0$;

\item  if $h(a)=0$ then $f(a,b) = a^i b^j$.
\end{enumerate}
The value of $T_m(f)$ can be computed without knowing an explicit choice of $f$.
\end{Lemma}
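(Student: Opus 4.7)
The plan is to start from the explicit polynomial
\[
f_0(x,y) = \bigl(1 - h(x)^{q^m - 1}\bigr)\, x^i y^j,
\]
which already satisfies conditions (2) and (3). Indeed, for $a \in \F_{q^m}$ Fermat gives $h(a)^{q^m-1} = 1$ when $h(a) \ne 0$ and $h(a)^{q^m-1} = 0$ when $h(a) = 0$, so $f_0(a,b) = 0$ in the first case and $f_0(a,b) = a^i b^j$ in the second, for any $b \in \F_{q^m}$. The only condition requiring real work is (1).

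I would secure (1) by modifying $f_0$ by a correction $g \in \F_q[x,y]$ that preserves (2) and (3). Since (2)--(3) pin down the values of $f$ at every point of $\F_{q^m}^2$, the correction $g$ must lie in the ideal of polynomials vanishing on $\F_{q^m}^2$, which is
\[
I = (x^{q^m} - x,\, y^{q^m} - y) \subset \F_q[x,y].
\]
Writing $g = (x^{q^m}-x)\,g_1(x,y) + (y^{q^m}-y)\,g_2(x,y)$, the poles of $g$ on $C$ can occur only at the infinite points $P_\infty$ of $C$. At each $P_\infty$ the Weierstrass semigroup contains infinitely many integers coprime to $p$; by choosing $g_1, g_2$ of sufficiently high degree with generic coefficients I can arrange that $-v_{P_\infty}(f_0 + g)$ is positive and coprime to $p$ simultaneously at every infinite place. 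Each such requirement is Zariski-open in a finite-dimensional space of polynomials of bounded degree, and the intersection over the finite set of infinite places of $C$ is still nonempty, so such $g$ exists. For the resulting $f = f_0 + g$, the Artin-Schreier extension $z^p - z = f$ is wildly ramified at every infinite point of $C$; being nontrivial and of prime degree $p$, it is automatically irreducible.

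For the final claim, once $f$ satisfies (1)--(3) the exponential sum $T_m(f)$ is determined entirely by the data pinned down by (2)--(3). Since $f$ is a polynomial it is regular at every affine point of $C$, so $z^p - z = f$ is \'etale at every affine $\F_{q^m}$-point; combined with ramification at every infinite place by (1), the open set $U$ of Section~\ref{ss:exp} coincides with the affine part of $C$. Hence, using (2) and (3),
\[
T_m(f) \;=\; \sum_{(a,b) \in U(\F_{q^m})} \Tr_m\!\bigl(f(a,b)\bigr) \;=\; \sum_{\substack{(a,b)\in C(\F_{q^m})\text{ affine}\\ h(a)=0}} \Tr_m(a^i b^j).
\]
The right-hand side depends only on $(C, h, i, j, m)$, so $T_m(f)$ is independent of the specific choice of $f$ satisfying (1)--(3). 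Once any such $f$ is in hand, Proposition~\ref{prop:determinesums} reads $T_m(f)$ off the $L$-functions of the extensions $z^p-z = \gamma f$ with $\gamma$ ranging over an $\F_p$-basis of $\F_q$.

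The main obstacle I expect is the simultaneous control of the leading pole orders of $f_0 + g$ at every infinite place of $C$ without access to $F$ itself; this amounts to a genericity argument in a finite-dimensional coefficient space, but making it \emph{effective} (rather than merely existential) is what requires care, and probably needs an a priori bound on the number and ramification of the infinite places in terms of $\deg F = d$ together with an explicit recipe producing valid $g_1, g_2$.
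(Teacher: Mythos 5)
Your overall strategy matches the paper's: start from $f_0 = x^iy^j(1-h^{q^m-1})$, which forces (2) and (3), and perturb it by an element of the ideal of polynomials vanishing on $\F_{q^m}^2$ so as to force ramification at every infinite place. But there are two genuine gaps. First, your existence argument for the perturbation is a genericity claim (``Zariski-open conditions with nonempty intersection, generic coefficients''), and over a finite field a nonempty Zariski-open subset of the coefficient space need not contain a rational point; you cannot pass to $\overline{\F}_q$ because the lemma requires $f \in \F_q[x,y]$. The paper replaces this with a counting argument: among the at least $q^{Dd}$ perturbations $f = x^iy^j(1-h^{q^m-1})(1+(x^{q^m}-x)g)$ with $\deg g \le D$, failure of (1) at a given infinite place forces the polar part there to be of the exceptional form $w^p-w$, which imposes at least $D(1-1/p)^2$ independent linear conditions on $f$; summing over the at most $d$ infinite places, the bad $f$ number $O(q^{D(1-(1-1/p)^2)})$, so for $D$ large more than half the choices of $g$ work. (Your alternative sufficient criterion --- pole order coprime to $p$ at each infinite place --- is fine in principle, but you would still need to realize it over $\F_q$ and at all places simultaneously, which is exactly the step you defer.)

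Second, and more importantly, your treatment of the last sentence of the lemma begs the question. You correctly observe that all $f$ satisfying (1)--(3) have the same $T_m(f)$, but then say that ``once any such $f$ is in hand'' Proposition~\ref{prop:determinesums} reads the value off the $L$-functions. The point of the lemma is that one cannot identify which candidate $f$ satisfies (1) without knowing $F$ (which is precisely what Theorem~\ref{thm:equations} is trying to recover), and for a candidate violating (1) the quantity extracted from the $L$-function need not be the desired sum. The paper's resolution is exactly the ``more than half'' count above: one computes the candidate value of $T_m$ for every $g$ with $\deg g \le D$ and takes the most common value. Your genericity argument, even if repaired to produce an $\F_q$-point, gives no quantitative lower bound on the proportion of good $g$, so it does not support this majority-vote step.
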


\begin{proof}
Consider possible $f$ of the form $f = x^iy^j(1-h^{q^m-1}) (1+(x^{q^m}-x)g)$, for an arbitrary $g \in \F_q[x,y]$.  The second and third statements are straightforward to check, and it is clear that the value $f(a,b)$ for $a,b \in \F_{q^m}$ is independent of the choice of $g$.  Thus we must show there is a choice of $g$ such that $z^p-z=f$ is irreducible and the Artin-Schreier extension is ramified over every infinite point of $C$.

Take a large integer $D$ and consider all functions on $K$ represented by a
$g \in \F_q[x,y]$ with $\deg g \le D$. There are at least $q^{D\cdot d }$ such functions (where $d = \deg F$ as before). For any such $g$, the resulting $f$ is a polynomial of degree at most $D+O(1)$ and is determined, up to a constant, by the polar parts at the places at infinity.  If condition (\ref{cond:ram}) is not satisfied, we must have that the polar part at one of the places at infinity is of the exceptional form $w^p-w$ for some  Laurent polynomial $w$ at that place. 
Let $v$ denote the valuation at that place and 
$m \colonequals \min\{v(x),v(y)\}$; note that $0 > m  \ge -d/\deg v$. 
Then $v(f) \ge m D$ and if the polar part of $f$ at $v$ is $w^p -w$
then $v(w) \ge mD/p$ and the coefficients of the powers $\pi^j$ of a local
parameter $\pi$ at $v$ occurring in the expansion of $f$ for $j < mD/p$
and $p \nmid j$ must vanish.  For $f$ to be of this exceptional form at the place $v$
 results in at least $D(1-1/p)^2$ independent linear
conditions on $f$. As there are at most $d$ 
such places, we see that
there are at most $O(q^{D(1-(1-1/p)^2)})$ polynomials $f$ of degree $D$ for which condition (\ref{cond:ram}) does not hold.  

Thus for $D$ sufficiently large (depending just on $d$ and $p$), we see that at least half of the choices of $g$ must give an $f$ satisfying condition (\ref{cond:ram}).  We conclude that the value of $T_m(f)$ for a choice of $f$ satisfying the first condition is the most common value obtained by computing $T_m ( x^i y^j (1-h^{q^{m}-1}) (1 + (x^{q^m}-x)g))$ for
all choices of $g$ with $\deg g \le D$ and some suitably large $D$.
\end{proof}

\begin{proof}[Proof of Theorem \ref{thm:equations}]
By Proposition~\ref{prop:determinesums}, the values of the exponential sums on $C$ are determined by the knowledge of the $L$-function.  
Choose $m$ and $h$ as in Lemma~\ref{lem:h} using our knowledge of the degree of $F$ and of the values of the exponential sums on $C$.  Then choose $f$ as in Lemma~\ref{lem:f}.  We compute that
\[
T_m(f) = \sum_{\stackrel{(a,b)\in (\F_{q^m})^2}{F(a,b)=0}} \Tr_m(f(a,b)) = \sum_{a:h(a)=0} \sum_{b:F(a,b)=0} \Tr_m(a^i b^j)  
\]
Since Frobenius cyclically permutes the roots of $h(x)$, we see that
\[
T_m(f) = m\sum_{a:h(a)=0} \sum_{b:F(a,b)=0} a^i b^j.
\]
As $p \nmid m$, we can use these values with $i=0,1,\ldots,m-1$ to recover, for each root $a$ of $h$, the value of $\sum_{b:F(a,b)=0} b^j$ as the Vandermonde matrix is invertible.  By Lemma~\ref{lem:powers}, these values determine the polynomial $F(a,y) \in \F_q[y]$ for each root $a$.  Viewing $F(x,y)$ as a polynomial in $x$ with coefficients in $\F_q[y]$,  the polynomials $F(a,y)$ let us determine $F(x,y) \in \F_q[x,y]$ as $m>d$.  This completes the proof.
\end{proof}

\begin{Remark}
The proof uses a ridiculously large number of $L$-functions of Artin-Schreier covers, something like $u(q^{m^2}+q^D)$, where $u=[\F_q:\F_p]$, $m$ comes from Lemma \ref{lem:h} and can be explicitly computed applying the effective Chebotarev Theorem from \cite{MS} to the splitting field of $F(x,y)$ over $\F_q(x)$ and $D$ comes from Lemma \ref{lem:f} and can be extracted from the proof there.  It would be nice to have a smaller such set.  In specific situations, it is often possible to be more efficient.
\end{Remark}

\begin{Example} \label{ex:legendre}
Let us use $L$-functions to distinguish members of the Legendre family of elliptic curves $y^2 = x ( x-1)(x-\lambda)$.  We take $g(x,y) = x (1-y^{q-1})$.  A direct calculation as in the proof shows that
\[
T_1(g) = \sum_{b : b(b-1)(b-\lambda) =0} b = 1 + \lambda.
\]
Thus the value of this single exponential sum determines $\lambda$.

We now explain how this is a simplification of the proof of Theorem~\ref{thm:equations} for this particular family of curves.  
First, note that it suffices to recover the coefficient of $y$ in the polynomial $F(x,y) = x^2 - y(y-1)(y-\lambda)$, so we will focus just on recovering the linear term of $F(0,y)$ instead of the whole polynomial.  (Note that we swap the usual role of $x$ and $y$ to conform to the notation in our proof.)  
This polynomial has degree $d=3$.  The assumption in Lemma~\ref{lem:h} that $m>d$ is irrelevant as we are only interested in the linear term, so let us take $m=1$.  Then $h(x) = x$ satisfies the other conditions of Lemma~\ref{lem:h} as $0 = x (x-1)(x-\lambda)$ has three solutions over $\F_q$.  Choosing $(i,j) = (0,1)$, we claim that $f = y (1-x^{q-1})$ satisfies the conditions of Lemma~\ref{lem:f}.  As $f$ has a pole of order $3q-1$ at infinity, $z^p -z = f$ is irreducible and ramified over the unique point at infinity.  If $h(a)=a \neq 0$, then $f(a,b) = b (1 -a^{q-1})=0$ and otherwise $f(a,b) = b$ as desired.  (Having knowledge of the behavior of $x$ and $y$ at infinity makes finding a suitable $f$ much easier.)  This recovers the exponential sum we used above,
\[
T_1(f) = \sum_{b : b(b-1)(b-\lambda) =0} b = 1 + \lambda.
\]

  The proof of Theorem~\ref{thm:equations}, which recovers the whole equation, is more complicated as it needs a larger $m$; it recovers the coefficient of $y$ in $F(0,y) = -y(y-1)(y-\lambda)$ using Artin-Schreier covers for various $(i,j)$.  In particular, it recovers the coefficient from recovering the sums of powers of the roots $0$, $1$, and $\lambda$.  These power sums in turn determine $F(0,y)$.
\end{Example}

\subsection{Power Sums and Symmetric Functions}
It remains to prove Lemma \ref{lem:powers}.  Continuing the notation of the lemma, we have the following elementary recurrences:
\begin{align}
k e_k & = \sum_{i=1}^k (-1)^{i-1} e_{k-i} p_i \quad \text{ when } 1 \leq k \leq n  \label{eq:recurrence1}\\
0 & = \sum_{i=k-n}^k (-1)^{i-1} e_{k-i} p_i \quad \text{ when } k>n \label{eq:recurrence2}.
\end{align}
These can be deduced by considering the generating function
\[
F(T) = \prod_{i=1}^n (1- x_i T) = \sum_{i=0}^n (-1)^i e_i T^i.
\]
The logarithmic derivative is
\[
\frac{F'(T)}{F(T)} = \sum_{i=1}^n \frac{-x_i}{1 - x_i T} = \sum_{j=0}^\infty  - p_{j+1} T^j.
\]
The recurrences follow by cross-multiplying.  

For a field $K$ of characteristic zero, $K[p_1,\ldots,p_n ] = K[e_1,\ldots,e_n] = K[x_1,\ldots, x_n]^{S_n}$.  This is false in positive characteristic (and over $\Z$).

\begin{example} \label{ex:power2}
Let $p=2$ and $n=2$.  Then $p_1 = e_1$, and the recurrences tell us that $2 e_2 = e_1 p_1 + p_2 = 0$ and that
\[
p_3 = e_1 p_2 - e_2 p_1  \quad \text{i.e.} \quad  x_1^3 + x_2^3  =(x_1 + x_2) \left( x_1^2 + x_2^2  - x_1 x_2. \right)
\]
Thus we see that
\[
e_2 = \frac{p_1 p_2 - p_3}{p_1} \quad \text{i.e.} \quad x_1 x_2 = \frac{ (x_1+x_2)(x_1^2 + x_2^2) - x_1^3 - x_2^3}{x_1 + x_2}.
\]
So we can recover the symmetric polynomials as rational functions in $p_1, p_2, p_3, \ldots$.  (Actually, we just need $p_1$ and $p_3$, as $p_1^2 = p_2$.)

Evaluating at elements of $\kk$, we can recover the values of $e_1$ and $e_2$ provided $p_1(x_1,x_2) \neq 0$.  This restriction is necessary: consider the family $x_1 = \alpha$, $x_2 = - \alpha = \alpha$ for $\alpha \in \kk$, where we have $p_i(x_1,x_2) =0$ for all $i$ but $e_2(x_1,x_2) = - \alpha^2$.
\end{example}

\begin{lem} \label{lem:field}
For any field $\kk$, the fields $\kk(p_1,\ldots,p_n, \ldots )$ and $\kk(e_1,\ldots, e_n)$ are equal as subfields of $\kk(x_1,\ldots,x_n)$.
\end{lem}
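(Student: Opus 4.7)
The plan is to use Galois theory of the universal symmetric extension $\kk(x_1,\ldots,x_n)/\kk(e_1,\ldots,e_n)$, which reduces the claim to the trivial observation that power sums are symmetric. The inclusion $\kk(p_1,p_2,\ldots) \subseteq \kk(e_1,\ldots,e_n)$ is immediate since each $p_k$ is $S_n$-invariant; one can even produce explicit polynomial expressions via \eqref{eq:recurrence1} for $k \leq n$, extended by rearranging \eqref{eq:recurrence2} as $p_k = e_1 p_{k-1} - e_2 p_{k-2} + \cdots + (-1)^{n-1} e_n p_{k-n}$ for $k>n$.

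For the reverse inclusion, the key step is to verify that $\kk(x_1,\ldots,x_n)/\kk(e_1,\ldots,e_n)$ is a Galois extension with group $S_n$, uniformly in the characteristic of $\kk$. Adjoining the $x_i$ one at a time (each a root of the degree-$n$ polynomial $T^n - e_1 T^{n-1} + \cdots + (-1)^n e_n$) shows the extension has degree at most $n!$. On the other hand, $S_n$ acts by permuting the $x_i$ and fixes $\kk(e_1,\ldots,e_n)$ pointwise, giving at least $n!$ distinct automorphisms. Combining with the standard bound $|\Aut(L/K)| \leq [L:K]$ yields equality of both quantities with $n!$, whence the extension is Galois with $\Gal = S_n$.

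Now I would apply the Galois correspondence to the intermediate field $\kk(p_1,p_2,\ldots)$. Its associated subgroup is $H = \{\sigma \in S_n : \sigma(p_k) = p_k \text{ for all } k\geq 1\}$, which equals all of $S_n$ since each $p_k = x_1^k + \cdots + x_n^k$ is manifestly symmetric. Hence
\[
\kk(p_1,p_2,\ldots) \;=\; \kk(x_1,\ldots,x_n)^{S_n} \;=\; \kk(e_1,\ldots,e_n),
\]
proving the lemma. There is no real obstacle; the only mild subtlety is that, unlike the polynomial-ring identities, the field equality holds in every characteristic because separability of $\prod(T-x_i)$ is automatic---its roots are distinct indeterminates. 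A more hands-on alternative would be to invert a linear system in the $p_j$'s obtained by specializing \eqref{eq:recurrence2} to $k=n+1,\ldots,2n$, showing the resulting determinant is a nonzero element of $\kk(x_1,\ldots,x_n)$ (for $n=2$ it factors as $e_2$ times the discriminant, as already visible in Example \ref{ex:power2}); this route is strictly less elegant and is worth avoiding in favor of the Galois argument.
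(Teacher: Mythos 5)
The easy inclusion $\kk(p_1,p_2,\ldots)\subseteq\kk(e_1,\ldots,e_n)$ in your write-up is fine, but your proof of the reverse inclusion --- which is the entire content of the lemma --- is circular. The fundamental theorem of Galois theory recovers a field $M$ as $L^{\Gal(L/M)}$ only when $M$ is an \emph{intermediate} field of the Galois extension $L/K$, that is, only when $M$ already contains the base field $K=\kk(e_1,\ldots,e_n)$; and that containment is exactly what you are trying to prove. For a subfield $M\subseteq L$ known only to contain $\kk$, the observation that every $\sigma\in S_n$ fixes $M$ pointwise yields merely $M\subseteq L^{S_n}$, i.e.\ the easy inclusion again. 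To see that the argument as written cannot be right, apply it verbatim to $M=\kk(p_1)$: every permutation fixes $p_1$, so the ``associated subgroup'' is again all of $S_n$, and your conclusion would read $\kk(p_1)=\kk(e_1,\ldots,e_n)$, which is false for $n\ge 2$. What is missing is any upper bound on $[L:\kk(p_1,p_2,\ldots)]$, equivalently a proof that $L$ is Galois of degree $n!$ over $\kk(p_1,p_2,\ldots)$; in characteristic $p$ this cannot be extracted from Newton's identities because of the factor $k$ in \eqref{eq:recurrence1}, and Example~\ref{ex:power2} shows the $e_i$ genuinely fail to be polynomials in the $p_j$, so some real work is unavoidable here.

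The paper's proof takes a different, non-Galois route, which is essentially a rigorous version of the ``hands-on alternative'' you mention and then discard. It observes that $\sum_{j\ge 0}(-p_{j+1})T^j=F'(T)/F(T)$ with $F(T)=\prod_i(1-x_iT)$, sets up the linear system $A(T)=B(T)\cdot\sum_{j\ge0}(-p_{j+1})T^j$ in the unknown coefficients of $A$ (degree at most $n-1$) and $B$ (degree $n$) with coefficients in $\kk(p_1,p_2,\ldots)$, notes that this system is solvable over the larger field $\kk(x_1,\ldots,x_n)$ and hence already over $\kk(p_1,p_2,\ldots)$, and finally uses that $F$ and $F'$ are coprime (the $x_i$ being distinct indeterminates) to conclude $B=F$ after normalizing $B(0)=1$, so the coefficients of $F$ --- the $e_i$ up to sign --- lie in $\kk(p_1,p_2,\ldots)$. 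If you want to salvage your write-up, that is the direction to pursue; the Galois correspondence can only ever deliver the trivial inclusion here.
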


\begin{proof}
As the $p_i$ are symmetric, they may be written as polynomials in terms of the $e_i$'s.  So it suffices to write the $e_i$ as rational functions in terms of the $p_i$.   The key idea is to show that the power series
\[
\sum_{j=0}^\infty - p_{j+1} T^j
\]
is a rational function with coefficients in $k(p_1,p_2,\ldots)$.  We already know it is a rational function with coefficients in $k(x_1,\ldots ,x_n)$ from the identity 
\[
\frac{F'(T)}{F(T)} = \sum_{j=0}^\infty - p_{j+1} T^j.
\] 
As $F(T)$ has no repeated roots, $F(T)$ and $F'(T)$ have no common roots and so if
\[
\frac{A(T)}{B(T)} = \frac{F'(T)}{F(T)} 
\]
with $A(T), B(T) \in k(x_1,\ldots,x_n)[T]$ and $A(T)$ of degree at most $n-1$ (in T) and $B(T)$ of degree $n$ (in T), then $A(T) = g F'(T)$ and $B(T) = g F(T)$ for some non-zero $g \in k(x_1,\ldots ,x_n)$.  If we require that the constant term of $B(T)$ is one, then $A(T) = F'(T)$ and $B(T) = F(T)$.

Now write $A(T) = \sum_{i=0}^{n-1} a_i T^i$ and $B(T) = \sum_{i=0}^n b_i T^i$, viewing the $a_i$ and $b_i$ as variables, and consider 
\[
A(T) = B(T) \cdot \left( \sum_{j=0}^\infty - p_{j+1} T^j \right).
\]
Equating coefficients of $T$, we obtain an (infinite) system of linear equations with variables $\{a_i\}$ and $\{b_i\}$ and coefficients in $\kk(p_1,p_2,\ldots)$.  This is solvable over the larger field $\kk(x_1,\ldots,x_n)$ as we have the identity of rational functions 
\[
\frac{F'(T)}{F(T)} = - \sum_{j=0}^\infty - p_{j+1} T^j.
\]
Therefore, the system of linear equations is also solvable over $\kk(p_1,p_2,\ldots )$. 

In particular, there exists $A(T)$ of degree at most $n-1$ and $B(T)$ of degree $n$ in $\kk(p_1,p_2,\ldots)[T]$ such that 
\[
\frac{A(T)}{B(T)} = \sum_{j=0}^\infty - p_{j+1} T^j.
\]
We may certainly scale by an element of $\kk(p_1,p_2,\ldots)$ so the constant term of $B(T)$ is $1$, in which case we must have that $A(T) = F'(T)$ and $B(T) = F(T)$.  Therefore the coefficients of $F(T)$, which up to sign are the elementary symmetric functions, lie in $\kk(p_1,p_2,\ldots)$.  This completes the proof. 
\end{proof}

\begin{remark}
It follows from Lemma~\ref{lem:field} that $\kk(p_1,p_2,\ldots)$ is generated by finitely many of the $p_i$'s, although it is not clear which ones do so.  As Example~\ref{ex:power2} shows, it is not possible to simply write $e_1,\ldots e_n$ in terms of $p_1,\ldots , p_n$.  
\end{remark}

We can now prove Lemma~\ref{lem:powers}.

\begin{proof}[Proof of Lemma~\ref{lem:powers}]
Evaluate $F$ at $x_1 = \beta_1, x_2 = \beta_2, \ldots, x_n = \beta_n$, obtaining
\[
G(T) := \prod_{i=1}^n (1 - \beta_i T ) = \sum_{i=0}^n (-1)^i e_i(\beta_1,\ldots , \beta_n) T^{i}.
\]
The logarithmic derivative is
\[
\frac{G'(T)}{G(T)} = \sum_{i=1}^r \frac{ - n_i \alpha_i}{ 1 - \alpha_i T} = - \sum_{j=0}^\infty p_{j+1}(\beta_1,\ldots,\beta_n) T^j.
\]
So knowledge of the power sums determines the rational function $\frac{G'(T)}{G(T)}$.  Note that its partial fraction decomposition is unique.

First assume that none of the $\alpha_i$ are zero.  The poles of this rational function are the $\alpha_i$, and the residue at $\alpha_i$ determines $n_i$ modulo $p$.  This is enough to recover the values of the symmetric functions.  
Otherwise, without loss of generality assume $\alpha_1 = 0$.  Then we recover $\alpha_2,\ldots, \alpha_r$ and $n_2,\ldots, n_r$, and can find $n_1$ via $n_1 = n - (n_2 + \ldots + n_r)$.
\end{proof}

\begin{remark}
Without the restriction that $1 \leq n_i <p$, it is not possible to recover the multiset $\{ \beta_1,\ldots, \beta_n\}$, as one could add $p$ to one of the $n_i$ and subtract $p$ from another without changing the values of the power sums or the logarithmic derivative of $G(T)$.  

Lemma~\ref{lem:powers} is computationally effective.  If all we know is $n$ and the power sums, we can use a continued fraction algorithm on the power series to find the rational function.  It is not immediately clear how many power sums are needed in general. If the field $\kk$ is the finite field of $q$ elements, then $p_{q-1+i}=p_i$ and thus it's enough to
have $p_i$ for $i=1,\ldots,q-1$.
\end{remark}

\begin{bibdiv}

\begin{biblist}


\bib{BKT}{article}{
   author={Bogomolov, Fedor},
   author={Korotiaev, Mikhail},
   author={Tschinkel, Yuri},
   title={A Torelli theorem for curves over finite fields},
   journal={Pure Appl. Math. Q.},
   volume={6},
   date={2010},
   number={1, Special Issue: In honor of John Tate.},
   pages={245--294},
   issn={1558-8599},
   
}

\bib{Bom}{article}{
AUTHOR = {Bombieri, Enrico},
     TITLE = {On exponential sums in finite fields},
   JOURNAL = {Amer. J. Math.},
    VOLUME = {88},
      YEAR = {1966},
     PAGES = {71--105},
     }

\bib{BV}{article}{
author={Booher, Jeremy},
author={Voloch, Jos\'e Felipe},
title={Recovering algebraic curves from L-functions of Hilbert class fields},
journal={Res. Number Theory},
   volume={6},
   date={2020},
   number={4},
   pages={43},
}


\bib{CDL+}{article}{
AUTHOR = {Cornelissen, Gunther},
author={de Smit, Bart},
author={Li, Xin},
author={Marcolli, Matilde}, 
author={Smit, Harry},
     TITLE = {Characterization of global fields by {D}irichlet {$L$}-series},
   JOURNAL = {Res. Number Theory},
    VOLUME = {5},
      YEAR = {2019},
    NUMBER = {1},
     PAGES = {Art. 7, 15},
}

\bib{gassmann}{article}{
    AUTHOR = {Ga{\ss}mann, F.},
     TITLE = {Bemerkungen zur Vorstehenden Arbeit von Hurwitz: \"{U}ber {B}eziehungen zwischen den {P}rimidealen eines  algebraischen {K}\"{o}rpers und den {S}ubstitutionen seiner
              {G}ruppe},
   JOURNAL = {Math. Z.},
  FJOURNAL = {Mathematische Zeitschrift},
    VOLUME = {25},
      YEAR = {1926},
    NUMBER = {1},
     PAGES = {661--675},
      ISSN = {0025-5874},
}

\bib{hk}{article}{
   author={Halter-Koch, Franz},
   title={A note on ray class fields of global fields},
   journal={Nagoya Math. J.},
   volume={120},
   date={1990},
   pages={61--66},
   issn={0027-7630},
}

\bib{kummer}{article}{
    AUTHOR = {Kummer, E. E.},
     TITLE = {\"{U}ber die {E}rg\"{a}nzungss\"{a}tze zu den allgemeinen
              {R}eciprocit\"{a}tsgesetzen},
   JOURNAL = {J. Reine Angew. Math.},
  FJOURNAL = {Journal f\"{u}r die Reine und Angewandte Mathematik. [Crelle's
              Journal]},
    VOLUME = {44},
      YEAR = {1852},
     PAGES = {93--146},
      ISSN = {0075-4102},
   MRCLASS = {DML},
  MRNUMBER = {1578793},
       DOI = {10.1515/crll.1852.44.93},
       URL = {https://doi.org/10.1515/crll.1852.44.93},
}

\bib{LiRu}{article}{
   author={Li, Wen-Ching Winnie},
   author={Rudnick, Zeev},
   title={Pair arithmetical equivalence for quadratic fields},
   journal={Math. Z.},
   volume={299},
   date={2021},
   number={1-2},
   pages={797--826},
   issn={0025-5874},
   review={\MR{4311619}},
   doi={10.1007/s00209-021-02706-w},
}

\bib{manin}{article}{
    AUTHOR = {Manin, Ju. I.},
     TITLE = {The {H}asse-{W}itt matrix of an algebraic curve},
   JOURNAL = {Izv. Akad. Nauk SSSR Ser. Mat.},
  FJOURNAL = {Izvestiya Akademii Nauk SSSR. Seriya Matematicheskaya},
    VOLUME = {25},
      YEAR = {1961},
     PAGES = {153--172},
      ISSN = {0373-2436},
}

\bib{MS}{article}{
   author={Kumar Murty, Vijaya},
   author={Scherk, John},
   title={Effective versions of the Chebotarev density theorem for function
   fields},
   journal={C. R. Acad. Sci. Paris S\'{e}r. I Math.},
   volume={319},
   date={1994},
   number={6},
   pages={523--528},
}

\bib{neukirch}{article}{
    AUTHOR = {Neukirch, J\"{u}rgen},
     TITLE = {Kennzeichnung der {$p$}-adischen und der endlichen
              algebraischen {Z}ahlk\"{o}rper},
   JOURNAL = {Invent. Math.},
  FJOURNAL = {Inventiones Mathematicae},
    VOLUME = {6},
      YEAR = {1969},
     PAGES = {296--314},
      ISSN = {0020-9910},
}
\bib{Pi}{thesis}{
AUTHOR = {Pintonello, M.},
TITLE = {Characterizing number fields with quadratic L-functions},
NOTE={ALGANT Master Thesis in Mathematics,
Università degli studi di Padova \& Universiteit Leiden, 25 June 2018},
}

\bib{rosen}{article}{
   author={Rosen, Michael},
   title={The Hilbert class field in function fields},
   journal={Exposition. Math.},
   volume={5},
   date={1987},
   number={4},
   pages={365--378},
   issn={0723-0869},
}

\bib{ruck}{article}{ 
    AUTHOR = {R\"{u}ck, H.-G.},
     TITLE = {Class groups and {$L$}-series of function fields},
   JOURNAL = {J. Number Theory},
  FJOURNAL = {Journal of Number Theory},
    VOLUME = {22},
      YEAR = {1986},
    NUMBER = {2},
     PAGES = {177--189},
      ISSN = {0022-314X},
}

\bib{Schmidt1931}{article}{
author = {Schmidt, F. K.},
journal = {Mathematische Zeitschrift},
pages = {1--32},
title = {Analytische Zahlentheorie in K\"orpern der Charakteristik $p$},
volume = {33},
year = {1931},
}

\bib{Serre}{book}{
AUTHOR = {Serre, Jean-Pierre},
     TITLE = {Algebraic groups and class fields},
    SERIES = {Graduate Texts in Mathematics},
    VOLUME = {117},
      PUBLISHER = {Springer-Verlag, New York},
      YEAR = {1988},
     PAGES = {x+207}
     }

\bib{solomatin}{article}{
author = {Solomatin, P.},
title= {On Artin L-functions and Gassmann equivalence for global function fields},
note = {Preprint (2016). \url{https://arxiv.org/abs/1610.05600}},
}

\bib{StV}{article}{
author={St\"{o}hr, Karl-Otto},
author={Voloch, Jos\'e Felipe},
     TITLE = {A formula for the {C}artier operator on plane algebraic
              curves},
   JOURNAL = {J. Reine Angew. Math.},
     VOLUME = {377},
      YEAR = {1987},
     PAGES = {49--64},
}

\bib{SV}{article}{
author={Sutherland, Andrew V.},
author={Voloch, Jos\'e Felipe},
title={Maps between curves and arithmetic obstructions},
note={preprint, arxiv:1709.05734, Proceedings of AGCT 16, AMS Contemporary Mathematics, to appear.},
}

\bib{tate}{article}{
    AUTHOR = {Tate, John},
     TITLE = {Endomorphisms of abelian varieties over finite fields},
   JOURNAL = {Invent. Math.},
  FJOURNAL = {Inventiones Mathematicae},
    VOLUME = {2},
      YEAR = {1966},
     PAGES = {134--144},
      ISSN = {0020-9910},
}

\bib{uchida76}{article}{
    AUTHOR = {Uchida, K\^{o}ji},
     TITLE = {Isomorphisms of {G}alois groups},
   JOURNAL = {J. Math. Soc. Japan},
  FJOURNAL = {Journal of the Mathematical Society of Japan},
    VOLUME = {28},
      YEAR = {1976},
    NUMBER = {4},
     PAGES = {617--620},
      ISSN = {0025-5645},
}

\bib{uchida77}{article}{
    AUTHOR = {Uchida, K\^{o}ji},
     TITLE = {Isomorphisms of {G}alois groups of algebraic function fields},
   JOURNAL = {Ann. of Math. (2)},
  FJOURNAL = {Annals of Mathematics. Second Series},
    VOLUME = {106},
      YEAR = {1977},
    NUMBER = {3},
     PAGES = {589--598},
      ISSN = {0003-486X},
}


\bib{VW}{article}{
AUTHOR = {Voloch, Jos\'{e} Felipe},
  AUTHOR = {Walker, Judy L.},
     TITLE = {Euclidean weights of codes from elliptic curves over rings},
   JOURNAL = {Trans. Amer. Math. Soc.},
    VOLUME = {352},
      YEAR = {2000},
    NUMBER = {11},
     PAGES = {5063--5076},
}

\bib{Z1}{article}{
AUTHOR = {Zilber, Boris},
     TITLE = {A curve and its abstract {J}acobian},
   JOURNAL = {Int. Math. Res. Notices},
      YEAR = {2014},
    NUMBER = {5},
     PAGES = {1425--1439},
}

\bib{Z2}{article}{
AUTHOR = {Zilber, Boris},
TITLE = {A curve and its abstract {J}acobian},
NOTE = {Corrected version of \cite{Z1}, preprint, http://people.maths.ox.ac.uk/zilber/JacobianCor.pdf},
YEAR = {2017},
}

\end{biblist}
\end{bibdiv}


\end{document}